\documentclass[leqno,10pt]{amsart}
\usepackage{amsmath,amstext,amssymb,amsopn,amsthm,mathrsfs,color}
\usepackage{enumerate}
\usepackage{graphicx}
\usepackage{multirow}

\allowdisplaybreaks

\DeclareMathOperator*{\essup}{ess\,sup}

\DeclareMathOperator{\domain}{Dom}

\DeclareMathOperator{\id}{Id}

\newtheorem{thm}{Theorem}[section]
\newtheorem*{thm*}{Theorem}

\newtheorem{propo}[thm]{Proposition}
\newtheorem{lem}[thm]{Lemma}
\newtheorem{cor}[thm]{Corollary}

\newtheorem*{rem*}{Remark}
\newtheorem{rem}[thm]{Remark}

\def\a{\alpha}

\def\s{\sigma}

\setlength{\textheight}{23cm} \setlength{\textwidth}{16cm}
\setlength{\oddsidemargin}{0cm} \setlength{\evensidemargin}{0cm}
\setlength{\topmargin}{0cm}

%%%%%%%%%%%%%%%%%%%%%%%%%%%%%%%%%%%%%%%%%%%%
\begin{document}
%%%%%%%%%%%%%%%%%%%%%%%%%%%%%%%%%%%%%%%%%%%%%%

\author[A. Nowak]{Adam Nowak}
\address{Adam Nowak \newline
Instytut Matematyczny,
Polska Akademia Nauk\newline
\'Sniadeckich 8,
00-956 Warszawa, Poland}
\email{adam.nowak@impan.pl}

\author[K. Stempak]{Krzysztof Stempak} 
\address{Krzysztof Stempak     \newline
      Instytut Matematyki i Informatyki,
      Politechnika Wroc\l{}awska       \newline
      Wyb{.} Wyspia\'nskiego 27,
      50--370 Wroc\l{}aw, Poland}
\email{krzysztof.stempak@pwr.wroc.pl}

%\date{\today}

\footnotetext{
\emph{2010 Mathematics Subject Classification:} Primary 42C10, 47G40; Secondary 31C15, 26A33.\\
%42C10 - Fourier series in special orthogonal functions (Legendre polynomials, Walsh functions, etc.)
%47G40 - Operator theory: potential operators
%31C15 - Potential theory: potentials and capacities
%26A33 - Real functions: fractional derivatives and integrals
\emph{Key words and phrases: Hankel transform, modified Hankel transform, 
Hankel-Dunkl transform, Bessel operator, Dunkl Laplacian,
negative power, potential operator, Riesz potential, fractional integral, Bessel potential, 
two-weight estimate.}\\
\indent
The first-named author was supported by the National Science Centre of Poland, project no.\
2013/09/B/ST1/02057.
Research of the second-named author supported by funds of the Institute of Mathematics and 
Computer Science, Technical University of Wroc\l aw, project \#{S30097/I-18}.
}

%%%%%%%%%%%%%%%%%%%%%%%%%%%%%%%%%%%%%%%%%%%%%%%%%%%%%%
\title[Hankel potential operators]{Potential operators associated with\\ 
	Hankel and Hankel-Dunkl transforms} 
%%%%%%%%%%%%%%%%%%%%%%%%%%%%%%%%%%%%%%%%%%%%%%%%%%%%%%

%%%%%%%%%%%%%%%%%%%%%%%%%%%%%%%%%%%%%%%%%%%%%%%%%%%%%%
\begin{abstract}
We study Riesz and Bessel potentials in the settings of Hankel transform, modified Hankel 
transform and Hankel-Dunkl transform. We prove sharp or qualitatively sharp pointwise estimates of 
the corresponding potential kernels. Then we characterize those $1\le p,q \le \infty$, 
for which the potential operators satisfy $L^p-L^q$ estimates.
In case of the Riesz potentials, we also characterize
those $1\le p,q \le \infty$, for which two-weight $L^p-L^q$ estimates,
with power weights involved, hold. As a special case of our results, we obtain a full characterization
of two power-weight $L^p-L^q$ bounds for the classical Riesz potentials in the radial case.
This complements an old result of Rubin and its recent reinvestigations by
De N\'apoli, Drelichman and Dur\'an, and Duoandikoetxea.
\end{abstract}
%%%%%%%%%%%%%%%%%%%%%%%%%%%%%%%%%%%%%%%%%%%%%%%%%%%%%%

\maketitle

%%%%%%%%%%%%%%%%%%%%%%%%%%%%%%%%%%%%%%%%%%%%%
\section{Introduction} \label{sec:intro}
%%%%%%%%%%%%%%%%%%%%%%%%%%%%%%%%%%%%%%%%%%%%%
In their seminal article \cite{MS}, Muckenhoupt and Stein outlined a program of development of 
harmonic analysis in the framework of the modified Hankel transform $H_{\a}$.
This context emerges naturally in connection with radial analysis in Euclidean spaces. 
Indeed, it is well known that the Fourier transform of a radial function in 
$\mathbb{R}^n$, $n \ge 1$, reduces directly to the modified Hankel transform of order $\a=n/2-1$. Moreover,
the radial part of the standard Laplacian in $\mathbb{R}^n$ is the Bessel operator
$\frac{d^2}{dx^2} + \frac{2\a+1}{x}\frac{d}{dx}$, $\a=n/2-1$, which is the natural 
`Laplacian' in harmonic analysis associated with $H_{\a}$. In \cite[Section 16]{MS}, among other results,
an analogue of the celebrated Hardy-Littlewood-Sobolev theorem was stated for fractional integrals
(Riesz potentials) corresponding to $H_{\a}$.

Recent years brought a growing interest in harmonic analysis related to Hankel transforms/\! Bessel operators.
For instance, Betancor, Harboure, Nowak and Viviani \cite{BHNV} delivered a thorough study of mapping
properties of maximal operators, Riesz transforms and Littlewood-Paley-Stein type square functions
in the settings of modified and non-modified Hankel transforms. This, as well as many earlier results
(see the references in \cite{BHNV}), was done in dimension one. Recently, harmonic analysis in the context of
Bessel operators was developed in higher dimensions, see Betancor, Castro and Curbelo \cite{BCC0,BCC},
Betancor, Castro and Nowak \cite{BCN}, and Castro and Szarek \cite{CS1}. More recently, in a similar
spirit Castro and Szarek \cite{CS2} investigated fundamental harmonic analysis operators in a wider
Hankel-Dunkl setting. The latter situation is a special and the most explicit case
of a general framework based on the Dunkl Laplacian and the Dunkl transform, when the underlying
Coxeter group is isomorphic to $\mathbb{Z}_2^n$. For more details on the Dunkl theory we refer to the
survey article \cite{Ro}.

In this paper we study Riesz and Bessel potentials associated with Hankel and Hankel-Dunkl transforms in
dimension one. We prove sharp pointwise estimates of the corresponding Riesz potential kernels 
(Theorems \ref{thm:ker} and \ref{thm:ker_D}) and qualitatively sharp pointwise estimates for the related
Bessel potential kernels (Theorems \ref{thm:ker_bes} and \ref{thm:ker_besD}). 
This enables us to characterize those $1\le p,q \le \infty$, for which the potential operators satisfy
$L^p-L^q$ estimates, see Theorem \ref{thm:LpLq}, Corollary \ref{cor:LpLq_H} and Theorem \ref{thm:LpLq_D} for
the results on the Riesz potentials, and Theorems \ref{thm:LpLq_bes}, \ref{thm:LpLq_besN}
and \ref{thm:LpLq_besD} for the results on the Bessel potentials.
Moreover, in case of the Riesz potentials, we also determine those $1 \le p,q \le \infty$, for which
two power-weight $L^p-L^q$ bounds hold, see Theorems \ref{thm:main}, \ref{thm:main_H} and \ref{thm:main_D}.
All these results are Hankel or Hankel-Dunkl counterparts of a series of recent
sharp results concerning potential operators in several classic settings related to discrete orthogonal
expansions: Hermite function expansions \cite{NoSt4}, Jacobi and Fourier-Bessel expansions \cite{NR},
and Laguerre function and Dunkl-Laguerre expansions \cite{NoSt5}. The frameworks studied in
this paper correspond to continuous orthogonal expansions, and the general approach elaborated in 
the above mentioned articles applies here as well. 
However, the present $L^p-L^q$ results have somewhat different flavor.

An interesting by-product of our results is an alternative proof of a radial analogue of the celebrated
two power-weighted $L^p-L^q$ estimates for the classical Riesz potentials due to Stein and Weiss \cite{SW}.
Such an analogue was obtained by Rubin \cite{R} in the eighties of the last century. 
Rubin's result, being apparently overlooked,
was recently reinvestigated and refined by De N\'apoli, Drelichman and Dur\'an \cite{DDD}, 
and Duoandikoetxea \cite{Duo}. In fact, Corollary \ref{cor:main} below slightly extends the above
mentioned results, see the related comments following the statement.

Crucial aspects of our results are their sharpness and completeness.
The latter means, in particular, that in each of the contexts we treat the full admissible range of the
associated parameter of type, which in the Hankel-Dunkl setting manifests in including an `exotic'
case of negative multiplicity functions.
Some parts of our results were obtained earlier, by various authors, which is always commented in the
relevant places according to our best knowledge. In this connection, we mention again the article of
Muckenhoupt and Stein \cite{MS}, and the works of Gadjiev and Aliev \cite{GA} where 
Riesz and Bessel potentials in the context of the modified Hankel transform were investigated,
Thangavelu and Xu \cite{TX} where Riesz and Bessel potentials for the Dunkl transform were introduced
and studied,
Hassani, Mustapha and Sifi \cite{HMS} where for the Riesz potentials the subject was continued, 
Betancor, Mart\'{i}nez and
Rodr\'{i}guez-Mesa \cite{BMRM} where Riesz potentials for the Hankel transform were considered,
and Ben Salem and Touahri \cite{BST} where Bessel potentials for the Dunkl transform were studied.
We note that there is a very wide variety of papers and results pertaining to potential
operators in numerous settings. For instance, Anker \cite{A} investigated Riesz and Bessel potentials
in the framework of non-compact symmetric spaces and his analysis, like ours, was based on sharp 
pointwise estimates of the corresponding kernels.

The Riesz and Bessel potentials we study, defined as integral operators, are naturally connected
with negative powers of the underlying `Laplacians' defined spectrally. This can easily be seen
in case of the Bessel potentials, but the issue is more delicate for the Riesz potentials, see
Propositions \ref{pro:neg_m}, \ref{pro:neg} and \ref{pro:neg_D}. Consequently, our results can be
used to obtain $L^p-L^q$ bounded extensions of negative powers of Bessel operators and the one-dimensional
Dunkl Laplacian.
Note also that our precise description of the potential kernels enables further research,
including quite natural questions of more general weighted inequalities, weak and restricted weak type
estimates, etc. Finally, we remark that acquaintance with the Dunkl theory in its general form is
not necessary to follow the part of the paper related to the Hankel-Dunkl transform.
In fact, this transform comes as a certain symmetrization of the modified Hankel transform.

The paper is organized as follows. In Section \ref{sec:prel} we introduce the three settings investigated
and state the main results. Section \ref{sec:esti} is devoted to deriving sharp or qualitatively sharp
estimates of the relevant potential kernels. In Section \ref{sec:LpLq} we prove $L^p-L^q$ bounds for
the Riesz and Bessel potential operators. 

\subsection*{Notation}
Throughout the paper we use a standard notation consistent with that from \cite{NR, NoSt4, NoSt5}. 
In particular, $X\lesssim Y$ indicates that $X\leq CY$ with a positive constant $C$
independent of significant quantities. We shall write $X \simeq Y$ when simultaneously
$X \lesssim Y$ and $Y \lesssim X$. Furthermore, 
$X\simeq\simeq  Y\exp(-cZ) $ means that there exist positive constants $C, c_1, c_2$, 
independent of significant quantities, such that
$$
C^{-1}Y\exp(-c_1Z)\le X\le C\,Y\exp(-c_2Z).
$$
In a number of places we will use natural and self-explanatory generalizations
of the ``$\simeq \simeq$'' relation, for instance, in connection with certain integrals
involving exponential factors. In such cases the exact meaning will be clear from the context.
By convention, ``$\simeq \simeq$'' is understood as ``$\simeq$'' whenever  no exponential
factors are involved. 
The symbols ``$\vee$'' and ``$\wedge$'' mean the operations of taking maximum and minimum, respectively.

We treat positive kernels and integrals as expressions valued in the extended half-line $[0,\infty]$.
Similar remark concerns expressions occurring in various estimates, with the natural limiting interpretations
like, for instance, $(0^+)^{\beta} = \infty$ when $\beta < 0$.

For definitions and terminology related to $L^p$ and weak $L^p$ spaces see, for instance, 
\cite[Chapter 1]{G}. Given a non-negative weight $w$, by $L^p(w^pd\mu)$ 
we understand the weighted $L^p$ space with respect to a measure
$\mu$. This means that $f \in L^p(w^p d\mu)$ if and only if $w f \in L^p(d\mu)$. The latter allows us to
abuse slightly the notation by admitting also $p=\infty$. Thus, by convention, 
$L^{\infty}(w^{\infty} d\mu)$ consists of all measurable functions $f$ such that $w f$ is essentially bounded.
Given $1\le p \le \infty$, we denote by $p'$ its conjugate exponent, $1/p + 1/p' =1$.

For an easy distinction between the settings of 
modified and non-modified Hankel transform we use calligraphic letters when denoting objects
within the latter context. For objects in the Hankel-Dunkl setting we use the blackboard bold font,
e.g.\ $\mathbb L_\a$, $\mathbb H_\a$.

%%%%%%%%%%%%%%%%%%%%%%%%%%%%%%%%%%%%%%%%%%%%%%%%%%%%%%%%%%%%%%%%%%%%%%%%%%%%%%%%%%%%%%%%%%%%%%%%%%%%%%%%%
\section{Preliminaries and statement of results}\label{sec:prel}
%%%%%%%%%%%%%%%%%%%%%%%%%%%%%%%%%%%%%%%%%%%%%%%%%%%%%%%%%%%%%%%%%%%%%%%%%%%%%%%%%%%%%%%%%%%%%%%%%%%%%%%%%

Let $\a > -1$. Define the measure
$$
d\mu_{\a}(x) = x^{2\a+1}\, dx
$$
on $\mathbb{R}_+=(0,\infty)$ and the functions
$$
\phi_{\a}(u) = u^{-\a} J_{\a}(u) \qquad \textrm{and} \qquad \varphi_{\a}(u) = \sqrt{u} J_{\a}(u), \qquad u>0,
$$
where $J_{\a}$ denotes the Bessel function of the first kind and order $\a$. 
The \emph{modified Hankel transform} $H_{\a}$ and the (non-modified) 
\emph{Hankel transform} $\mathcal{H}_{\a}$ are given by
$$
H_{\a}f(x) = \int_0^{\infty} \phi_{\a}(xy) f(y)\, d\mu_{\a}(y) \qquad \textrm{and} \qquad
\mathcal{H}_{\a}f(x) = \int_{0}^{\infty} \varphi_{\a}(xy) f(y)\, dy, \qquad x > 0,
$$
for appropriate functions $f$ on $\mathbb{R}_+$. It is well known that $H_{\a} \circ H_{\a} = \id$
and $\mathcal{H}_{\a} \circ \mathcal{H}_{\a} = \id$ on $C_c^{\infty}(\mathbb{R}_+)$. Moreover,
$$
\|H_{\a}f\|_{L^2(\mathbb{R}_+,d\mu_{\a})} = \|f\|_{L^2(\mathbb{R}_+,d\mu_{\a})} \qquad \textrm{and} \qquad
\|\mathcal{H}_{\a}f\|_{L^2(\mathbb{R}_+,dx)} = \|f\|_{L^2(\mathbb{R}_+,dx)}
$$
for $f \in C_c^{\infty}(\mathbb{R}_+)$.
Thus $H_{\a}$ and $\mathcal{H}_{\a}$ extend uniquely to isometric isomorphisms on 
$L^2(\mathbb{R}_+,d\mu_{\a})$ and $L^2(\mathbb{R}_+,dx)$, respectively. These extensions are denoted by
still the same symbols, even though they do not express via the integral formulas in general. We believe
that this will not lead to a confusion, and the exact meaning of $H_{\a}$ and $\mathcal{H}_{\a}$
will always be clear from the context.

Intimately connected with the Hankel transforms are the \emph{Bessel operators}
$$
L_\a=-\frac{d^2}{dx^2}-\frac{2\a+1}x\frac d{dx} \qquad \textrm{and} \qquad
\mathcal{L}_\a=-\frac{d^2}{dx^2}-\frac{1/4-\a^2}{x^2}.
$$
Considered initially on $C_c^{2}(\mathbb{R}_+)$, they are symmetric and positive in
$L^2(\mathbb{R}_+,d\mu_{\a})$ or in $L^2(\mathbb{R}_+,dx)$, respectively. The standard self-adjoint 
extensions of $L_{\a}$ and $\mathcal{L}_{\a}$ (denoted here by the same symbols) are given in terms
of $H_{\a}$ and $\mathcal{H}_{\a}$, respectively. More precisely, we have (see \cite[Section 4]{BS})
$$
L_{\a}f = H_{\a}(y^2H_{\a}f), \qquad 
\domain L_{\a} = \big\{ f \in L^2(\mathbb{R}_+,d\mu_{\a}) : y^2 H_{\a}f \in L^2(\mathbb{R}_+,d\mu_{\a})\big\},
$$
and similarly in case of $\mathcal{L}_{\a}$ and $\mathcal{H}_{\a}$.
Note that
$$
H_{\a}(L_{\a}f)(x) = x^2 H_{\a}f(x) \qquad \textrm{and} \qquad 
\mathcal{H}_{\a}(\mathcal{L}_{\a}f)(x) = x^2 \mathcal{H}_{\a}f(x), \qquad \textrm{a.a.} \;\; x>0,
$$
for $f \in \domain L_{\a}$ or $f \in \domain \mathcal{L}_{\a}$, respectively. These identities are true for
all $x>0$ when $f \in C_c^{2}(\mathbb{R}_+)$. The latter fact may be easily verified directly, since
$\phi_{\a}$ and $\varphi_{\a}$ express eigenfunctions of $L_{\a}$ and $\mathcal{L}_{\a}$,
\begin{equation} \label{efL}
L_{\a} \phi_{\a}(xy) = y^2 \phi_{\a}(xy) \qquad \textrm{and} \qquad 
\mathcal{L}_{\a} \varphi_{\a}(xy) = y^2\varphi_{\a}(xy), \qquad x,y > 0
\end{equation}
(here $L_{\a}$ and $\mathcal{L}_{\a}$ are the differential operators applied in the $x$ variable) and
$J_{\a}$ admits the well known asymptotics
\begin{equation} \label{asym_J}
J_{\a}(u) \simeq u^{\a}, \quad u \to 0^+, \qquad \textrm{and} \qquad
J_{\a}(u)=\mathcal O(u^{-1/2}), \quad u \to \infty.
\end{equation}

The settings of $L_{\a}$ and $\mathcal{L}_{\a}$ are intertwined by the unitary isomorphism
$$ %\label{inter}
M_{\a+1/2}\colon L^2(\mathbb{R}_+,d\mu_{\a}) \to L^2(\mathbb{R}_+,dx), \qquad \textrm{where} \qquad
M_{\eta}f(x) = x^{\eta} f(x), \qquad x >0;
$$
in particular, for $\a=-1/2$ the two contexts coincide. Thus we have 
$\mathcal{H}_{\a}\circ M_{\a+1/2} = M_{\a+1/2}\circ H_{\a}$ in $L^2(\mathbb{R}_{+},d\mu_{\a})$ and
$\mathcal{L}_{\a}\circ M_{\a+1/2} = M_{\a+1/2}\circ L_{\a}$ on $\domain L_{\a}$, and similarly
for other objects, for instance fractional integrals investigated in this paper.

For $\a > -1$, we also consider the \emph{Hankel-Dunkl transform} (cf.\ \cite{D})
\begin{equation} \label{DH}
\mathbb{H}_{\a} f(x) =\int_{-\infty}^{\infty} \overline{\psi_{\a}(xy)} f(y)\, dw_{\a}(y), 
\qquad x \in \mathbb{R}.
\end{equation}
Here
$$
\psi_{\a}(u) =  \frac{1}{2} \Big[\phi_{\a}(|u|) + i u \phi_{\a+1}(|u|)\Big], \qquad u \in \mathbb{R}
$$
(the value $\psi_{\a}(0)$ is understood in a limiting sense) and $w_{\a}$ is the measure on
$\mathbb{R}$ given by
$$
dw_{\a}(x) = |x|^{2\a+1}\, dx.
$$
The integral in \eqref{DH} converges
for decent $f$, in particular for $f \in C_c^{\infty}(\mathbb{R}\setminus\{0\})$. It is known that
$(\mathbb{H}_{\a} \circ \mathbb{H}_{\a}) f(x) = f(-x)$, $f \in C_c^{\infty}(\mathbb{R}\setminus\{0\})$, 
and $\mathbb{H}_{\a}$ extends to an isometry on $L^2(\mathbb{R},dw_{\a})$ (we denote this extension by the
same symbol). For $\a \ge -1/2$ this follows from the general Dunkl theory, see \cite[Theorem 4.26]{J};
the full range $\a > -1$ is treated in \cite[Proposition 1.3]{NoSt0}. 

The one-dimensional \emph{Dunkl Laplacian}
$$
\mathbb{L}_{\a}f(x) = L_{\a}f(x) + (\a+1/2)\frac{f(x)-f(-x)}{x^2}, \qquad x \in \mathbb{R},
$$
considered initially on $C_c^{2}(\mathbb{R}\setminus \{0\})$, is symmetric and positive in
$L^2(\mathbb{R},dw_{\a})$. A natural self-adjoint extension of $\mathbb{L}_{\a}$ is given by
$$
\mathbb{L}_{\a}f = \mathbb{H}_{\a}^{-1} (y^2\mathbb{H}_{\a}f), \qquad
\domain \mathbb{L}_{\a} = \big\{f \in L^2(\mathbb{R},dw_{\a}) : y^2\mathbb{H}_{\a}f 
	\in L^2(\mathbb{R},dw_{\a})\big\}.
$$
Clearly, $\mathbb{H}_{\a}(\mathbb{L}_{\a}f)(x) = x^2 \mathbb{H}_{\a}f(x)$ for $f\in \domain \mathbb{L}_{\a}$
and a.a.\ $x \in \mathbb{R}$. This identity holds for all $x \in \mathbb{R}$ when 
$f \in C_c^{2}(\mathbb{R}\setminus \{0\})$, as can be verified with the aid of \eqref{asym_J} and the
relation
$$
\mathbb{L}_{\a} \psi_{\a}(xy) = y^2 \psi(xy), \qquad x,y \in \mathbb{R},
$$
where $\mathbb{L}_{\a}$ is the differential-difference operator applied in the $x$ variable.

The setting of $\mathbb{H}_{\a}$ and $\mathbb{L}_{\a}$ was discussed in numerous papers,
see for instance \cite{RV}, where the transform was called the \textit{generalized Hankel transform},
or more recent articles \cite{TX,CS2} and references therein.
The parameter $\a$ represents the so-called \emph{multiplicity function}, which is non-negative if and
only if $\a \ge -1/2$. The value $\a = -1/2$ corresponds to the trivial multiplicity function, and
in this case we recover the classical setting of the Fourier transform and the Euclidean Laplacian on
$\mathbb{R}$. When restricted to even functions, $\mathbb{H}_{\a}$ and $\mathbb{L}_{\a}$ reduce to
$H_{\a}$ and $L_{\a}$, and the Hankel-Dunkl setting coincides with the framework of the modified Hankel
transform.

%%%%%%%%%%%%%%%%%%%%%%%%%%%%%%%%
\subsection{The setting of the modified Hankel transform} \label{ssec:m_Hankel}

The integral kernel $\{W_t^{\a}\}_{t>0}$ of the Hankel semigroup $\{\exp(-tL_\a)\}$ is given by
$$
W_t^{\a}(x,y)=\int_0^\infty e^{-u^2t}\phi_\a(xu)\phi_\a(yu)\,d\mu_\a(u),     \qquad x,y>0.
$$
The last integral can be computed, in fact we have
$$
W_t^{\a}(x,y) = 
\frac{1}{2t} \exp\bigg(-\frac{x^2+y^2}{4t}\bigg) (xy)^{-\a} I_{\a}\Big(\frac{xy}{2t}\Big), \qquad x,y>0,
$$
where $I_\a$ denotes the Bessel function of the second kind of order $\a$. 
The function $I_\a$ is strictly positive on $(0,\infty)$ and
satisfies the  well known asymptotics
\begin{equation} \label{asym_I}
I_{\a}(u) \simeq z^{\a}, \quad u \to 0^+, \qquad \textrm{and} 
\qquad I_{\a}(u) \simeq u^{-1/2}e^u, \quad u \to \infty.
\end{equation}

Given $\s>0$, consider the negative power $(L_\a)^{-\s}$ defined in $ L^2(d\mu_\a)$ by means of the 
spectral theorem. From the form in which the spectral resolution of $L_\a$ is defined in terms of $H_\a$
(see \cite[Section 4]{BS}) it follows that
$$
(L_\a)^{-\s}f=H_\a\big(x^{-2\s}H_\a f\big), \qquad f\in \domain(L_\a)^{-\s},
$$
where
$$
\domain (L_\a)^{-\s}=\big\{f\in L^2(d\mu_\a)\colon x^{-2\s}H_\a f\in  L^2(d\mu_\a)\big\}.
$$
Taking into account the formal identity
$$
(L_\a)^{-\sigma}=\frac1{\Gamma(\sigma)}\int_0^\infty e^{-tL_\a} t^{\sigma-1}\,dt,
$$
it is natural to introduce the potential kernel
$$
K^{\a,\s}(x,y) = \frac{1}{\Gamma(\s)} \int_{0}^{\infty} W_t^{\a}(x,y) t^{\s-1}\, dt,
$$
and to consider the corresponding potential operator
$$
I^{\a,\s}f(x)=\int_{0}^{\infty}K^{\a,\s}(x,y)f(y)\,d\mu_\a(y), \qquad x>0, 
$$
with its natural domain
$\domain I^{\a,\s}$ consisting of those functions $f$ for which the above integral converges $x$-a.e.
We will see in a moment that the integral defining $K^{\a,\s}(x,y)$ converges for all $x\neq y$ when 
$\s< \a+1$, and  diverges for all $x,y>0$  when $\s \ge \a+1$. 
Also $K^{\a,\s}(x,y)>0$ and $K^{\a,\s}(x,y)=K^{\a,\s}(y,x)$ for all  $x,y>0$. 
Moreover, the heat kernel, and so the potential kernel, satisfy the homogeneity properties 
$$
W_t^{\a}(x,y) = r^{2(\a+1)}W_{r^2t}^{\a}(rx,ry), \qquad 
K^{\a,\s}(x,y) = r^{2(\a+1-\s)}K^{\a,\s}(rx,ry),\qquad x,y,r>0, 
$$
that result in the following homogeneity of the potential operator: 
\begin{equation} \label{homo_m}
I^{\a,\s}(f_r) = r^{-2\s}(I^{\a,\s}f)_r,\qquad r>0.
\end{equation}
Here $f_r(x)=f(rx)$, $r>0$, denotes the dilation of a function $f$. 

The exact behavior of $K^{\a,\s}(x,y)$ is described in the following.
%%%%%%%%%%%%%%%%%%%%%%%%%%%%%%%%%%%%%%%%%%%%%%
\begin{thm} \label{thm:ker}
Let $\a > -1$. If  $0< \s < \a+1$, then
$$
K^{\a,\s}(x,y) \simeq (x+y)^{-2\a-1}	
	\begin{cases}
		|x-y|^{2\s-1}, & \s< 1/2,\\
		\log \frac{2(x+y)}{|x-y|}, & \s=1/2,\\
		(x+y)^{2\s-1}, & \s>1/2,
	\end{cases}
$$
uniformly in $x,y > 0$. If $\s \ge \a +1$, then $K^{\a,\s}(x,y)=\infty$ for all $x,y>0$.
\end{thm}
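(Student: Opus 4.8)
The plan is to evaluate $K^{\a,\s}(x,y)$ down to a one-variable integral, and then estimate that integral by splitting it according to the two regimes in the asymptotics \eqref{asym_I} of $I_\a$.

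First I would insert the explicit formula for $W_t^{\a}$ into $K^{\a,\s}(x,y)=\frac{1}{\Gamma(\s)}\int_0^\infty W_t^{\a}(x,y)\,t^{\s-1}\,dt$ and substitute $u=xy/(2t)$; a short computation gives
$$
K^{\a,\s}(x,y)=\frac{(xy)^{\s-\a-1}}{2^{\s}\Gamma(\s)}\int_0^\infty \exp\!\Big(\!-\frac{x^2+y^2}{2xy}\,u\Big)\,I_\a(u)\,u^{-\s}\,du .
$$
Write $b=\frac{x^2+y^2}{2xy}$, so that $b-1=a:=\frac{(x-y)^2}{2xy}\ge0$, and record the elementary two-sided comparisons $b\simeq (x+y)^2/(xy)$, and $xy\simeq(x+y)^2$ precisely when $a\le1$. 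Since $I_\a(u)\simeq u^\a$ as $u\to0^+$, the integrand behaves like $u^{\a-\s}$ near the origin, so the integral diverges exactly when $\s\ge\a+1$, while the prefactor stays finite; this gives the second assertion of the theorem, and from now on $0<\s<\a+1$.

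Next, using \eqref{asym_I} in the uniform form $I_\a(u)\simeq u^\a$ on $(0,1]$ and $e^{-u}I_\a(u)\simeq u^{-1/2}$ on $[1,\infty)$, I would split the integral at $u=1$, absorbing the factor $e^{u}$ from the large-$u$ asymptotics into $e^{-bu}$, to get
$$
K^{\a,\s}(x,y)\simeq (xy)^{\s-\a-1}\bigg(\int_0^1 e^{-bu}u^{\a-\s}\,du+\int_1^\infty e^{-au}u^{-1/2-\s}\,du\bigg).
$$
The first integral, after the substitution $v=bu$ and using $b\ge1$ together with $\a-\s>-1$, is comparable to $b^{\s-\a-1}$; multiplied by $(xy)^{\s-\a-1}$ this contributes $(x^2+y^2)^{\s-\a-1}\simeq(x+y)^{-2\a-1}(x+y)^{2\s-1}$, which is always one of the two competing terms. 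For the second integral I would separate $a>1$ — where it is $\lesssim e^{-a/2}$, hence $\lesssim b^{\s-\a-1}$, so the first term dominates — from $a\le1$, where elementary estimates yield $\int_1^\infty e^{-au}u^{-1/2-\s}\,du\simeq a^{\s-1/2}$ for $\s<1/2$, $\simeq\log(2/a)$ for $\s=1/2$ (this is the exponential integral $E_1(a)$), and $\simeq1$ for $\s>1/2$.

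Finally, collecting the two contributions and translating back via $a=\frac{(x-y)^2}{2xy}$ and $xy\simeq(x+y)^2$ on the near-diagonal range $\{a\le1\}$, one checks that the second term equals, up to constants, $(x+y)^{-2\a-1}|x-y|^{2\s-1}$ for $\s<1/2$, $(x+y)^{-2\a-1}\log\frac{2(x+y)}{|x-y|}$ for $\s=1/2$, and $(x+y)^{-2\a-1}(x+y)^{2\s-1}$ for $\s>1/2$, and that on $\{a\le1\}$ it dominates or matches the first term, while on $\{a>1\}$ it is dominated by it — and there $|x-y|\simeq x+y$, so all three forms collapse to $(x+y)^{-2\a-1}(x+y)^{2\s-1}$ anyway. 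I expect the delicate point to be the logarithmic case $\s=1/2$: one must verify $\log(2/a)\simeq\log\frac{2(x+y)}{|x-y|}$ uniformly, which rests on the fact that $\log\frac{x+y}{|x-y|}$ is bounded below by a positive constant throughout $\{a\le1\}$, and more generally on keeping all comparison constants uniform across the transition $a\simeq1$. Alternatively, the homogeneity relation $K^{\a,\s}(x,y)=r^{2(\a+1-\s)}K^{\a,\s}(rx,ry)$ permits normalizing $y=1$ and working with the single variable $\xi=x/y\in(0,1]$, which streamlines the bookkeeping.
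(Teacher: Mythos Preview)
Your proof is correct and follows essentially the same strategy as the paper's: split the $t$-integral at the transition in the asymptotics \eqref{asym_I} of $I_\a$ (your substitution $u=xy/(2t)$ and split at $u=1$ is exactly the paper's split at $t=xy$), estimate the two pieces, and then handle the near-diagonal and off-diagonal regimes separately. The only organizational difference is that the paper packages the integral estimates into the auxiliary lemma on $E_A(T,S)$ (Lemma~\ref{lem:E}) while you carry them out directly; the paper itself notes in a remark that such simpler tools suffice for this theorem, and its reason for invoking Lemma~\ref{lem:E} is reuse in the Bessel-potential estimates (Theorems~\ref{thm:ker_bes} and~\ref{thm:ker_besD}).
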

%%%%%%%%%%%%%%%%%%%%%%%%%%%%%%%%%%%%%%%%%%%%%%
This result enables us to characterize $L^p-L^q$ boundedness of the potential operator $I^{\a,\s}$.

%%%%%%%%%%%%%%%%%%%%%%%%%%%%%%%%%%%%%%%%%%%%%%
\begin{thm} \label{thm:LpLq}
Let $\a > -1$ and $0< \s < \a+1$. Assume that $1\le p,q\le \infty$. Then
\begin{itemize}
\item[(i)] $L^p(d\mu_{\a}) \subset \domain I^{\a,\s}$ if and only if $p<\frac{\a+1}{\s}$;
\item[(ii)] $I^{\a,\s}$ is bounded from $L^p(d\mu_{\a})$ to $L^q(d\mu_{\a})$ if and only if
$$
\frac{1}q = \frac{1}p - \frac{\s}{\a+1} \quad \textrm{and} \quad 1 < p < \frac{\a+1}{\s} \quad
	\textrm{and} \quad \a \ge -1/2;
$$
\item[(iii)] $I^{\a,\s}$ is bounded from $L^1(d\mu_{\a})$ to weak $L^q(d\mu_{\a})$ for 
$q=\frac{\a+1}{\a+1-\s}$ if and only if $\a \ge -1/2$.
\end{itemize}
\end{thm}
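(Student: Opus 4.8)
The plan is to combine three ingredients: the sharp pointwise bounds of Theorem~\ref{thm:ker}, the dilation identity \eqref{homo_m}, and a splitting of the kernel into a \emph{local} part, supported where $x$ and $y$ are of comparable size, and a \emph{global} part. Put
$$
\Upsilon = \Big\{(x,y)\in\mathbb{R}_+\times\mathbb{R}_+ :\ \tfrac12 \le \tfrac xy \le 2\Big\},
\qquad I^{\a,\s} = I^{\a,\s}_{\mathrm{loc}} + I^{\a,\s}_{\mathrm{glob}},
$$
the two operators carrying the kernels $K^{\a,\s}\mathbf 1_{\Upsilon}$ and $K^{\a,\s}\mathbf 1_{\Upsilon^c}$. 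On $\Upsilon^c$ one has $|x-y|\simeq x+y\simeq x\vee y$, hence, \emph{regardless of which of the three cases in Theorem~\ref{thm:ker} occurs},
$$
K^{\a,\s}(x,y)\,\mathbf 1_{\Upsilon^c}(x,y) \lesssim (x\vee y)^{2\s-2\a-2};
$$
on $\Upsilon$ the measure $d\mu_\a$ is locally comparable to a constant multiple of $dx$, and after the substitution $x=e^u$, $y=e^v$ the homogeneity of $K^{\a,\s}$ turns $I^{\a,\s}_{\mathrm{loc}}$ into a convolution-type operator on $\mathbb{R}$ governed by a kernel $\kappa$ supported in $\{|w|\le\log2\}$, with $\kappa(w)\simeq\Phi(|w|)$, where $\Phi(s)=s^{2\s-1}$, $\log\tfrac1s$, or $1$ according as $\s<\tfrac12$, $\s=\tfrac12$, or $\s>\tfrac12$; the power weights introduced by the change of variables cancel once the $L^p(d\mu_\a)$ and $L^q(d\mu_\a)$ norms are matched using $\tfrac1q=\tfrac1p-\tfrac{\s}{\a+1}$.

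\emph{Part (i)} I would prove with the same split, taking $f\ge 0$. When $1<p<\tfrac{\a+1}{\s}$, the global contribution $\int K^{\a,\s}\mathbf 1_{\Upsilon^c}(x,y)f(y)\,d\mu_\a(y)$ is finite at every $x$ by H\"older's inequality, because $y\mapsto (x\vee y)^{2\s-2\a-2}\mathbf 1_{\Upsilon^c}(x,y)\in L^{p'}(d\mu_\a)$ exactly when $p<\tfrac{\a+1}{\s}$ — the sole obstruction being integrability as $y\to\infty$ — while the local contribution, in the variable $u=\log x$, is dominated by $\kappa\ast|\widetilde f|$ with $\kappa\in L^1(\mathbb{R})$ (note $\int_0^{\log2}s^{2\s-1}\,ds<\infty$ since $\s>0$) and $\widetilde f\in L^1_{\mathrm{loc}}$, hence finite a.e.\ by Tonelli. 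For $p=1$ a direct Tonelli estimate suffices, since $\int_a^b K^{\a,\s}(x,y)\,d\mu_\a(x)\lesssim_{a,b} 1$ uniformly in $y>0$ (the diagonal singularity is integrable and the kernel decays at $y=\infty$ because $\s<\a+1$). Conversely, if $p\ge\tfrac{\a+1}{\s}$ the global kernel fails to lie in $L^{p'}(d\mu_\a)$ at $y=\infty$, uniformly for $x$ in compacts, so a suitable explicit datum — $f(y)=y^{\gamma}\mathbf 1_{(2,\infty)}(y)$ with $-2\s\le\gamma<-\tfrac{2\a+2}{p}$, a logarithmically corrected variant at the endpoint $p=\tfrac{\a+1}{\s}$, and $f\equiv 1$ when $p=\infty$ — lies in $L^p(d\mu_\a)$ while $I^{\a,\s}f\equiv+\infty$.

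For the \emph{sufficiency in (ii) and (iii)}: the global operator is dominated by the sum of the Hardy-type operators $f\mapsto x^{2\s-2\a-2}\int_0^x f(y)\,d\mu_\a(y)$ and $f\mapsto \int_x^\infty y^{2\s-1}f(y)\,dy$, whose $L^p(d\mu_\a)\to L^q(d\mu_\a)$ boundedness for $1<p\le q<\infty$ is the classical weighted Hardy inequality; under $\tfrac1q=\tfrac1p-\tfrac{\s}{\a+1}$ the relevant Muckenhoupt suprema are finite precisely because $0<\s<\a+1$, and — crucially — \emph{no condition on the sign of $\a$ enters here}. For $p=1$ one uses instead the pointwise bound $I^{\a,\s}_{\mathrm{glob}}f(x)\lesssim x^{2\s-2\a-2}\|f\|_{L^1(d\mu_\a)}$, giving at once $I^{\a,\s}_{\mathrm{glob}}\colon L^1(d\mu_\a)\to L^{q,\infty}(d\mu_\a)$ for $q=\tfrac{\a+1}{\a+1-\s}$. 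For the local operator, the convolution against $\kappa$ is handled by Young's inequality: $\kappa\in L^r(\mathbb{R})$ with $\tfrac1r=1-\tfrac{\s}{\a+1}$ exactly when $\a>-\tfrac12$ (automatic for $\s\ge\tfrac12$, as then $\a+1>\tfrac12$), whereas for the borderline value $\a=-\tfrac12$ — the classical Fourier/Laplacian setting, where $\kappa$ lies only in weak $L^r$ — one quotes instead the sharp weak-type form of the Hardy--Littlewood--Sobolev inequality. The weak-type estimate (iii) follows by the same decomposition, using for the local part that $\kappa\in L^{q,\infty}(\mathbb{R})$ whenever $\a\ge-\tfrac12$, together with the weak-type Young inequality $L^{q,\infty}\ast L^1\subset L^{q,\infty}$ applied on each unit $\log$-block and the contributions summed via $\ell^{q}\hookrightarrow\ell^1$ (valid since $q\ge1$).

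For the \emph{necessity} in (ii) and (iii): the relation $\tfrac1q=\tfrac1p-\tfrac{\s}{\a+1}$ is forced by \eqref{homo_m} together with $\|f_r\|_{L^p(d\mu_\a)}=r^{-(2\a+2)/p}\|f\|_{L^p(d\mu_\a)}$; the constraint $p<\tfrac{\a+1}{\s}$ is part (i); and $p=1$, $p=\infty$ are ruled out by testing on $f=\mathbf 1_{(0,1)}$, for which Theorem~\ref{thm:ker} gives $I^{\a,\s}f(x)\simeq x^{2\s-2\a-2}$ as $x\to\infty$ with $\|x^{2\s-2\a-2}\|_{L^q(\{x>1\},d\mu_\a)}=\infty$ for $q=\tfrac{\a+1}{\a+1-\s}$, while for $p=\infty$ no admissible $q$ exists. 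Finally $\a\ge-\tfrac12$ is necessary — this is the only appearance of the sign of $\a$, hence a genuine restriction only when $\s<\tfrac12$, since $\s<\a+1$ forces $\a>-\tfrac12$ for $\s\ge\tfrac12$ — and for $-1<\a<-\tfrac12$ one tests on $f=\mathbf 1_{(1,1+\delta)}$: the diagonal behaviour $K^{\a,\s}(x,y)\simeq|x-y|^{2\s-1}$ yields $I^{\a,\s}f(x)\gtrsim\delta\,|x-1|^{2\s-1}$ on $\delta<|x-1|<\tfrac14$, and letting $\delta\to0$ violates the putative $L^p$--$L^q$ bound (because the governing exponent $2\s-\tfrac{\s}{\a+1}=\tfrac{\s(2\a+1)}{\a+1}$ is negative) and the putative $L^1$--weak-$L^q$ bound (because it would force $q\le\tfrac1{1-2\s}$, whereas $q=\tfrac{\a+1}{\a+1-\s}>\tfrac1{1-2\s}$ when $\a<-\tfrac12$). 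The main difficulty I expect is exactly this endpoint analysis at $\a=-\tfrac12$: there $\kappa$ is only of weak type and the logarithmic change of variables fails to respect weak $L^q$, so the local part must be treated blockwise and one must invoke the sharp Hardy--Littlewood--Sobolev inequality rather than plain Young; relatedly, in part (i) one has to reconcile the fact that $K^{\a,\s}(x,\cdot)\notin L^{p'}(d\mu_\a)$ in general — its diagonal singularity being too strong when $\a$ is near $-\tfrac12$ — with $I^{\a,\s}$ being a.e.\ defined on all of $L^p(d\mu_\a)$, which is why the local part has to be handled through its $L^1$ convolution structure rather than by a crude H\"older estimate.
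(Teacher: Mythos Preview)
Your decomposition and tools coincide with the paper's: both split $I^{\a,\s}$ into a global piece controlled by $(x\vee y)^{2\s-2\a-2}$ and treated via weighted Hardy inequalities, and a local piece near the diagonal treated via Young's inequality on the multiplicative group $(\mathbb{R}_+,\tfrac{dx}{x})$ --- your logarithmic change of variables is exactly this. Parts (i) and (ii) are in fact deduced in the paper as the unweighted case $a=b=0$ of the more general Theorem~\ref{thm:main}, whose proof uses precisely this split into $H_0,H_\infty$ (your global part) and $T,S$ (your local part).

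The handling of (iii) differs in two small respects. First, a slip: you wrote $\ell^q\hookrightarrow\ell^1$, but what your blockwise argument actually needs (and what is true for $q\ge1$) is $\ell^1\hookrightarrow\ell^q$. Second, the paper sidesteps the blockwise weak-Young argument altogether: when $\a>-1/2$ one has $(2\s-1)q>-1$ for $q=\tfrac{\a+1}{\a+1-\s}$, so a direct H\"older estimate gives $T\colon L^{q'}(d\mu_\a)\to L^\infty$, and by the symmetry of the kernel this yields $T\colon L^1(d\mu_\a)\to L^q(d\mu_\a)$ in the \emph{strong} sense (similarly for $S$ when $\s=1/2$); the endpoint $\a=-1/2$ is the classical one-dimensional Riesz potential, for which the weak type is quoted. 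For the necessity of $\a\ge-1/2$ in (iii), your direct test on $\mathbf{1}_{(1,1+\delta)}$ is correct; the paper argues instead by contraposition --- if the weak type $(1,q)$ held, duality would give restricted weak type $(q',\infty)$, and interpolation would then produce strong $L^p$--$L^q$ bounds for intermediate $p$, contradicting the already-established necessity in (ii).
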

%%%%%%%%%%%%%%%%%%%%%%%%%%%%%%%%%%%%%%%%%%%%%%
In the case $\a\ge-1/2$ the sufficiency part of Theorem \ref{thm:LpLq} (ii) was known earlier, 
see \cite[Section 16(j)]{MS} and the comments closing Section \ref{ssec:m_Hankel}. 
Apart from that, the result seems to be new.

We now explain the way in which $I^{\a,\s}$ and $(L_\a)^{-\s}$ are connected. 
Since the issue is delicate, our approach will be slightly pedantic. 
As test functions we shall use the space $ H_\a(C_c^\infty)$ which is dense in $L^2(d\mu_\a)$; 
here $C_c^\infty=C_c^\infty(\mathbb{R}_+)$. 
The inclusion $ H_\a(C_c^\infty)\subset L^2(d\mu_\a)$ and the density  
follow from the fact that $H_\a$ extends to an isometry on $L^2(d\mu_\a)$.
But in fact more can be said about $H_{\a}(C_c^{\infty})$.  
Given $g\in C_c^\infty$, $H_\a g$ is continuous on $\mathbb{R}_+$ and
\begin{equation} \label{decH}
H_{\a}g(x) = \mathcal{O}(1), \quad x \to 0^+, \qquad \textrm{and} \qquad H_{\a}g(x) = \mathcal{O}(x^{-k}),
	\quad x \to \infty,
\end{equation}
for each fixed $k \in \mathbb{N}$. The first of these relations is a simple consequence of \eqref{asym_J}.
The second one can be verified by applying $H_{\a}$ to $(L_{\a})^{k}g$ and
then using the symmetry of $L_{\a}$, \eqref{efL}, and again \eqref{asym_J}.
Hence the inclusion
$H_\a(C_c^\infty)\subset L^p(d\mu_\a)$, $1\le p\le \infty$, follows.  
In particular, by Theorem \ref{thm:LpLq} (i), $H_\a(C_c^\infty)\subset \domain I^{\a,\s}$, $0<\s<\a+1$. 
The question of density of $H_{\a}(C_c^{\infty})$ in $L^p(d\mu_{\a})$ spaces is more involved.
In case $\a \ge -1/2$ and $1 < p < \infty$ such density follows from \cite[Theorem 4.7]{ST}.

The next result shows that $I^{\a,\s}$ and $(L_{\a})^{-\s}$ coincide on $H_{\a}(C_c^{\infty})$.
Combined with the comments above and the $L^p-L^q$ results for $I^{\a,\s}$ proved in this paper,
it can be used to obtain $L^p-L^q$ bounded extensions of negative powers of $L_{\a}$.
%%%%%%%%%%%%%%%%%%%%%%%%%%%%%%%%%%%%%%%%%%%%%%
\begin{propo} \label{pro:neg_m}
Let $\a>-1$ and $0<\s<\a+1$. For every $f\in H_\a(C_c^\infty)$ we have
$$ 
I^{\a,\s}f(x)=(L_\a)^{-\s}f(x),\qquad  \textrm{a.a.} \;\; x>0.
$$
\end{propo}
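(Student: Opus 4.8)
The plan is to verify the identity by computing the modified Hankel transform of both sides and checking that they agree as $L^2(d\mu_\a)$ functions, since $H_\a$ is an isometry on $L^2(d\mu_\a)$. Fix $f \in H_\a(C_c^\infty)$, say $f = H_\a g$ with $g \in C_c^\infty$; then by the involution property $H_\a f = g$. On the one hand, by the spectral definition recalled in the text, $(L_\a)^{-\s} f = H_\a(x^{-2\s} H_\a f) = H_\a(x^{-2\s} g)$, and since $g$ has compact support in $\mathbb{R}_+$ the function $x^{-2\s} g$ again belongs to $C_c^\infty \subset L^2(d\mu_\a)$, so $f \in \domain (L_\a)^{-\s}$ and the right-hand side makes sense. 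On the other hand, we must show $H_\a(I^{\a,\s} f) = x^{-2\s} g$ a.e.

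First I would establish that $I^{\a,\s} f \in L^2(d\mu_\a)$, which is not immediate since Theorem \ref{thm:LpLq}(ii) only gives $L^p \to L^q$ with $q$ determined by $p$ and $\s$. Here one uses the refined decay \eqref{decH}: $f = H_\a g$ satisfies $f(x) = \mathcal{O}(1)$ near $0$ and $f(x) = \mathcal{O}(x^{-k})$ at infinity for every $k$, hence $f \in L^{p_0}(d\mu_\a)$ for all $p_0$ with $1 \le p_0 < (\a+1)/\s$; choosing $p_0$ close to $1$ and then $p_1$ close to $(\a+1)/\s$, interpolation of the $L^{p_i}\to L^{q_i}$ bounds (or two direct applications of Theorem \ref{thm:LpLq}(ii)) places $I^{\a,\s} f$ in $L^{q_0} \cap L^{q_1}$ with $q_0 < 2 < q_1$, hence (using again the fast decay to control the tails, or simply that $L^{q_0}\cap L^{q_1} \subset L^2$ on any measure space) $I^{\a,\s} f \in L^2(d\mu_\a)$.

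Next, the core computation: I would show $H_\a(I^{\a,\s}f)(x) = x^{-2\s} H_\a f(x)$ by unwinding the definitions and interchanging the order of integration. Writing out
\[
I^{\a,\s} f(x) = \frac{1}{\Gamma(\s)} \int_0^\infty t^{\s-1} \int_0^\infty W_t^\a(x,y) f(y)\, d\mu_\a(y)\, dt = \frac{1}{\Gamma(\s)} \int_0^\infty t^{\s-1} e^{-tL_\a} f(x)\, dt,
\]
one recognizes the inner integral as $e^{-tL_\a}f(x) = H_\a(e^{-u^2 t} H_\a f)(x) = H_\a(e^{-u^2 t} g)(x)$. Applying $H_\a$ (an isometry, so it commutes with the $L^2$-convergent $t$-integral) and using the involution $H_\a \circ H_\a = \id$ gives, at least formally,
\[
H_\a(I^{\a,\s}f)(x) = \frac{1}{\Gamma(\s)} \int_0^\infty t^{\s-1} e^{-x^2 t} g(x)\, dt = x^{-2\s} g(x) = x^{-2\s} H_\a f(x),
\]
where the $t$-integral is the Gamma integral and converges because $x > 0$ on the support of $g$. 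The point left to justify is the interchange of $H_\a$ with $\int_0^\infty t^{\s-1}(\cdot)\, dt$; this follows once one checks $\int_0^\infty t^{\s-1} \| e^{-tL_\a} f \|_{L^2(d\mu_\a)}\, dt < \infty$, which in turn reduces, via Plancherel, to $\int_0^\infty t^{\s-1} \| e^{-u^2 t} g \|_{L^2(d\mu_\a)}\, dt < \infty$, and this is finite because $g$ is supported away from $u=0$ (giving exponential decay in $t$ at $t \to \infty$) and $\s > 0$ (giving integrability at $t \to 0^+$ since $\|e^{-u^2 t} g\|_2$ stays bounded). Therefore $H_\a(I^{\a,\s}f) = x^{-2\s} H_\a f = H_\a((L_\a)^{-\s}f)$ in $L^2(d\mu_\a)$, and applying the isometry $H_\a$ once more yields $I^{\a,\s}f = (L_\a)^{-\s}f$ a.e.

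The main obstacle I anticipate is not the formal manipulation but the integrability bookkeeping: one must (a) secure $I^{\a,\s}f \in L^2(d\mu_\a)$ so that $H_\a(I^{\a,\s}f)$ is defined by the isometric extension rather than the integral formula, and (b) justify pulling $H_\a$ through the $t$-integral, i.e.\ a Fubini/Bochner-integral argument in $L^2(d\mu_\a)$. Both are handled by exploiting the special decay \eqref{decH} of $f = H_\a g$ together with the compact support of $g$ bounded away from the origin; without the extra decay of \eqref{decH} one could not place $I^{\a,\s}f$ in $L^2$. A minor additional point is to confirm that $K^{\a,\s}(x,y) < \infty$ for $x \ne y$ (so the kernel integral defining $I^{\a,\s}f$ is absolutely convergent for such $f$), which is already guaranteed by Theorem \ref{thm:ker} in the range $0 < \s < \a+1$.
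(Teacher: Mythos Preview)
Your overall strategy---identify $H_\a(I^{\a,\s}f)$ with $x^{-2\s}g$ via the heat semigroup and the Gamma integral---is the same idea as the paper's, but your execution has a genuine gap at the step where you place $I^{\a,\s}f$ in $L^2(d\mu_\a)$.

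You invoke Theorem~\ref{thm:LpLq}~(ii) to obtain $I^{\a,\s}f\in L^{q_0}\cap L^{q_1}$ with $q_0<2<q_1$. This fails for two reasons. First, when $-1<\a<-1/2$ Theorem~\ref{thm:LpLq}~(ii) gives \emph{no} $L^p\!-\!L^q$ boundedness whatsoever, so the argument is vacuous in that range. Second, even for $\a\ge -1/2$, the relation $\tfrac1q=\tfrac1p-\tfrac{\s}{\a+1}$ with $1<p<\tfrac{\a+1}{\s}$ forces $q\in\big(\tfrac{\a+1}{\a+1-\s},\infty\big)$; this interval contains $2$ only when $\s<(\a+1)/2$, so for $(\a+1)/2\le \s<\a+1$ you cannot get any $q_0<2$, and interpolation between the two endpoints still lands on the same line. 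Thus your route to $I^{\a,\s}f\in L^2$ breaks down precisely in the parameter ranges where the result is most delicate.

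The paper avoids this issue entirely by working pointwise rather than through the $L^2$ isometry. Since $x^{-2\s}g\in C_c^\infty$, the spectral side $(L_\a)^{-\s}f=H_\a(x^{-2\s}g)$ is given for a.a.\ $x$ by the absolutely convergent integral $\int_0^\infty \phi_\a(xy)\,y^{-2\s}g(y)\,d\mu_\a(y)$. On the other side one expands $W_t^\a(x,y)=\int_0^\infty e^{-u^2t}\phi_\a(xu)\phi_\a(yu)\,d\mu_\a(u)$, interchanges integrals (justified by \eqref{asym_J}, \eqref{decH}, and the compact support of $g$ away from $0$), and evaluates the $t$-integral to recover the same expression. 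No a priori $L^2$ membership of $I^{\a,\s}f$ is needed.

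Your argument is salvageable: the Bochner estimate you prove, $\int_0^\infty t^{\s-1}\|e^{-tL_\a}f\|_2\,dt<\infty$, already shows that $\tfrac1{\Gamma(\s)}\int_0^\infty t^{\s-1}e^{-tL_\a}f\,dt$ converges in $L^2$, so the real task is to identify this $L^2$ limit with the pointwise $I^{\a,\s}f$ (e.g.\ by pairing against $h\in C_c^\infty$ and Fubini). Doing that would let you drop the appeal to Theorem~\ref{thm:LpLq}~(ii) altogether; as written, however, that appeal is the weak link.
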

%%%%%%%%%%%%%%%%%%%%%%%%%%%%%%%%%%%%%%%%%%%%%%
\begin{proof}
Let $f=H_\a g$, $g\in C_c^\infty$. It was just explained that $f\in \domain I^{\a,\s}$. 
To check that also $f\in \domain (L_\a)^{-\s}$, that is  $(\cdot)^{-2\s}H_\a f\in L^2(d\mu_\a)$, 
note that $f\in L^1(d\mu_\a)$, hence $g=H_\a f$ and, consequently,
the desired property follows. Moreover,
$$
(L_\a)^{-\s}f(x)=H_\a\big((\cdot)^{-2\s}g\big)(x)
= \int_0^\infty \phi_\a(xy)y^{-2\s}g(y)\,d\mu_\a(y), \qquad \textrm{a.a.}\;\; x>0.
$$

On the other hand, using the definitions of $I^{\a,\s}$ and $K^{\a,\s}$, and then interchanging the order
of integration (this is easily seen to be legitimate) gives
\begin{equation} \label{i3}
\Gamma(\s) I^{\a,\s}f(x) = \int_0^{\infty}\int_0^{\infty} W_t^{\a}(x,y) f(y)\, d\mu_{\a}(y)\, t^{\s-1}\, dt.
\end{equation}
We now focus on the inner integral with a fixed $t>0$. Using the definition of $W_t^{\a}(x,y)$ and then
changing the order of integrals (which is justified with the aid of \eqref{asym_J} and \eqref{decH}) we get
\begin{align*}
\int_0^{\infty} W_t^{\a}(x,y) f(y)\, d\mu_{\a}(y) & = 
\int_0^{\infty} \int_0^{\infty} e^{-u^2 t} \phi_{\a}(xu) \phi_{\a}(yu) f(y) \, d\mu_{\a}(y)\, d\mu_{\a}(u)\\
&= \int_0^{\infty} e^{-u^2 t} \phi_{\a}(xu) H_{\a}f(u)\, d\mu_{\a}(u).
\end{align*}
Coming back to the integral in \eqref{i3}, this gives us
$$
\Gamma(\s) I^{\a,\s}f(x) = \int_0^{\infty}\int_0^{\infty} e^{-u^2 t} t^{\s-1} \phi_{\a}(xu) g(u) \, 
	d\mu_{\a}(u)\, dt.
$$
Since the support of $g$ is separated from $0$ and $\infty$, we can once again interchange the order
of integration and evaluate first the integral in $t$, which is precisely $\Gamma(\s)u^{-2\s}$.
The conclusion follows.
\end{proof}

Recall the following classical result of E. M.\ Stein and G.\ Weiss \cite[Theorem $B^*$]{SW}
concerning two-weight $L^p-L^q$ estimates, with power weights involved, for
the Euclidean fractional integral (Riesz potential) 
$$
I^\s f(x)=\int_{\mathbb R^n}\frac{f(y)\, dy}{|x-y|^{n-2\s}},\qquad x\in \mathbb R^n, \quad 0<\s<n/2.
$$
%%%%%%%%%%%%%%%%%%%%%%%%%%%%%%%%%%%%%%%%%%%%%%
\begin{thm}[Stein \& Weiss]  \label{thm:classic}
Let $n\ge1$ and $0<\s<n/2$. Let $a,b \in \mathbb{R}$ and assume that $1<p\le q<\infty$. 
If $a<n/p'$, $b<n/q$, $a+b\ge0$, and $\frac1q=\frac1p+\frac{a+b-2\s}{n}$, then 
\begin{equation*} 
\big\||x|^{-b}I^\s f\big\|_{L^q(\mathbb R^n,dx)}\lesssim \big\||x|^{a}f\big\|_{L^p(\mathbb R^n,dx)}
\end{equation*} 
uniformly in $f \in L^p(\mathbb{R}^n,|x|^{ap}dx)$. 
\end{thm}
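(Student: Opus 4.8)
The plan is to prove the inequality directly, estimating $|x|^{-b}I^{\sigma}f(x)$ after splitting the range of the $y$-integration according to the relative size of $|x|$ and $|y|$. Set $F(y)=|y|^{a}|f(y)|\ge0$, so that the hypothesis becomes $F\in L^{p}(\mathbb{R}^{n},dx)$, and write $I^{\sigma}f=P_{1}f+P_{2}f+P_{3}f$, where $P_{j}f$ arises by restricting the $y$-integration to $R_{1}=\{|y|<|x|/2\}$, $R_{2}=\{|x|/2\le|y|\le 2|x|\}$ and $R_{3}=\{|y|>2|x|\}$, respectively; it suffices to prove $\big\||x|^{-b}P_{j}f\big\|_{L^{q}}\lesssim\|F\|_{L^{p}}$ for $j=1,2,3$. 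Throughout, the exponent identity $\tfrac1q=\tfrac1p+\tfrac{a+b-2\sigma}{n}$ is what makes the scalings match, while each of the remaining hypotheses is consumed by exactly one of the three pieces (and $p\le q$ is used in all of them).

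On $R_{1}$ one has $|x-y|\simeq|x|$, whence
$$
|x|^{-b}\big|P_{1}f(x)\big|\simeq|x|^{-b-n+2\sigma}\int_{|y|<|x|/2}|y|^{-a}F(y)\,dy .
$$
Passing to polar coordinates and using Minkowski's integral inequality, the desired $L^{p}\to L^{q}$ bound reduces to a power-weight Hardy inequality on $(0,\infty)$; in the present matched-scaling regime, and for $1<p\le q<\infty$, the Muckenhoupt-type criterion for that inequality reduces to the single requirement $a<n/p'$, which is precisely what makes $|y|^{-a}$ locally $p'$-integrable near the origin. Region $R_{3}$ is symmetric: there $|x-y|\simeq|y|$, one is led to the dual Hardy operator $F\mapsto|x|^{-b}\int_{|y|>2|x|}|y|^{-a-n+2\sigma}F(y)\,dy$, and the analogous criterion becomes $b<n/q$.

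The decisive piece is $P_{2}f$, and this is where the main work lies. On $R_{2}$ we have $|x|\simeq|y|$, so both power weights are essentially constant on each dyadic annulus $A_{k}=\{2^{k}\le|x|<2^{k+1}\}$, $k\in\mathbb{Z}$, and may be pulled out: up to the bounded overlap of a fixed enlargement $\widetilde A_{k}$ of $A_{k}$ (a union of a bounded number of adjacent annuli), for $x\in A_{k}$ one has $|x|^{-b}|P_{2}f(x)|\lesssim 2^{-k(a+b)}\,I^{\sigma}(\mathbf{1}_{\widetilde A_{k}}F)(x)$, where the surviving operator is the \emph{unweighted} Riesz potential localized to an annulus. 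Rescaling $A_{k}$ to the unit annulus and applying the local Hardy--Littlewood--Sobolev inequality (together with its $L^{\infty}$-type endpoint when $p\ge n/(2\sigma)$) gives $\big\|I^{\sigma}(\mathbf{1}_{\widetilde A_{k}}F)\big\|_{L^{q}(A_{k})}\lesssim 2^{k(a+b)}\|F\|_{L^{p}(\widetilde A_{k})}$; here the hypothesis $a+b\ge0$ enters precisely to guarantee that $q$ does not exceed the sharp local Sobolev exponent on the rescaled annulus, and the exponent $a+b$ of $2^{k}$ is produced by the scaling weight $2^{2k\sigma}$ of the Riesz kernel together with $\tfrac1q=\tfrac1p+\tfrac{a+b-2\sigma}{n}$. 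Consequently
$$
\big\||x|^{-b}P_{2}f\big\|_{L^{q}(\mathbb{R}^{n})}^{q}\lesssim\sum_{k\in\mathbb{Z}}2^{-kq(a+b)}\big\|I^{\sigma}(\mathbf{1}_{\widetilde A_{k}}F)\big\|_{L^{q}(A_{k})}^{q}\lesssim\sum_{k\in\mathbb{Z}}\|F\|_{L^{p}(\widetilde A_{k})}^{q},
$$
and since $p\le q$ the $\ell^{q}$ norm of the sequence $\big(\|F\|_{L^{p}(\widetilde A_{k})}\big)_{k}$ is at most its $\ell^{p}$ norm, which by bounded overlap is $\lesssim\|F\|_{L^{p}(\mathbb{R}^{n})}$.

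Combining the three estimates proves the theorem. The main obstacle is the diagonal piece $P_{2}f$: one must arrange the dyadic bookkeeping so that the scaling factors cancel exactly and verify that each structural hypothesis — $a<n/p'$, $b<n/q$, $a+b\ge0$, and $p\le q$ — is genuinely used (and sharp). As a closing remark, when $f$ is radial the whole inequality is a special case of Theorem~\ref{thm:main}: for radial functions $I^{\sigma}$ on $\mathbb{R}^{n}$ coincides, up to a constant, with the modified-Hankel potential operator $I^{\a,\s}$ of order $\a=n/2-1$, and the weights $|x|^{a}$, $|x|^{-b}$ turn into power weights with respect to $d\mu_{\a}$; this is how Corollary~\ref{cor:main} is obtained, whereas the argument above is what recovers the full, non-radial statement.
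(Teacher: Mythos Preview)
The paper does not prove Theorem~\ref{thm:classic}; it is merely recalled as the classical Stein--Weiss result \cite[Theorem~$B^*$]{SW} to provide context for the paper's own radial analogue (Corollary~\ref{cor:main}). So there is no ``paper's proof'' to compare against. Your sketch is essentially the standard Stein--Weiss argument and is correct: the Hardy-type pieces $P_1,P_3$ are handled by the weighted Hardy inequalities (exactly the content of Lemma~\ref{lem:Hardy} in the paper, after passing to polar coordinates and averaging over spheres), and the diagonal piece $P_2$ is handled by dyadic localization plus the unweighted Hardy--Littlewood--Sobolev inequality on a fixed annulus; the scaling computation you indicate does produce the factor $2^{k(a+b)}$, and the condition $a+b\ge 0$ is precisely what allows $q$ to stay at or below the local Sobolev exponent on the rescaled annulus.

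It is worth noting, since you mention it yourself, that the paper's route to the \emph{radial} case is genuinely different. There the authors work entirely on $(0,\infty)$ with the measure $d\mu_\a$, obtain the sharp pointwise estimate of the kernel $K^{\a,\s}$ (Theorem~\ref{thm:ker}), and decompose $I^{\a,\s}$ into the Hardy-type pieces $H_0,H_\infty$ plus the ``local'' pieces $T$ (when $\s<1/2$) and $S$ (when $\s=1/2$); the local pieces are then treated as convolutions on the multiplicative group $(\mathbb{R}_+,dx/x)$ via Young's inequality (Lemma~\ref{lem:Young}). That approach buys sharpness in both directions --- necessity as well as sufficiency, including the endpoint cases $p=1$ and $q=\infty$ --- which your Stein--Weiss-style argument does not address. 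Conversely, your argument covers the full non-radial statement, which the paper's methods do not.
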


In this paper we obtain the following sharp analogue of Theorem \ref{thm:classic} in the context of the
modified Hankel transform. 
%%%%%%%%%%%%%%%%%%%%%%%%%%%%%%%%%%%%%%%%%%%%%%
\begin{thm} \label{thm:main}
Let $\a > -1$ and $0 < \s < \a +1$. Let $a,b \in \mathbb{R}$ and assume that $1\le p,q \le \infty$.
\begin{itemize}
\item[(i)] $L^p(x^{ap}d\mu_{\a}) \subset \domain I^{\a,\s}$ 
if and only if
\begin{equation} \label{cnd17}
2\s - \frac{2\a+2}{p} < a < \frac{2\a+2}{p'} \qquad \textrm{(both $\le$ when $p=1$)}.
\end{equation}
\item[(ii)]
The estimate
\begin{equation*}
\big\|x^{-b}I^{\a,\s} f\big\|_{L^q(d\mu_\a)}\lesssim \big\|x^af\big\|_{L^p(d\mu_\a)}
\end{equation*} 
holds uniformly in $f \in L^p(x^{ap}d\mu_{\a})$ 
if and only if 
the following conditions are satisfied: 
\begin{itemize}
\item[(a)] $p \le q$,
\item[(b)] $\frac{1}{q} = \frac{1}p + \frac{a+b-2\s}{2\a+2}$,
\item[(c)] $a < \frac{2\a+2}{p'}$ \quad ($\le$ when $p = q'= 1$),
\item[(d)] $b < \frac{2\a+2}q$ \quad ($\le$ when $p = q'=1$),
\item[(e)] $\frac{1}q \ge \frac{1}p - 2\s$ \quad ($>$ when $p=1$ or $q=\infty$).
\end{itemize}
\end{itemize}
\end{thm}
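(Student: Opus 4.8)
The plan is to combine the sharp kernel bounds of Theorem~\ref{thm:ker} with the classical two-weight Hardy and Hardy--Littlewood--Sobolev inequalities, after splitting the integration in $I^{\a,\s}$ into a near-diagonal piece and two global pieces; the scaling information will be read off from \eqref{homo_m}. Since $K^{\a,\s}$ is a positive symmetric kernel it suffices to treat $f\ge0$, and we split $I^{\a,\s}=I_0+I_{\mathrm{loc}}+I_\infty$ according to whether $y<x/2$, $x/2<y<2x$, or $y>2x$; by positivity, boundedness of $I^{\a,\s}$ between (weighted) Lebesgue spaces is equivalent to simultaneous boundedness of $I_0$, $I_{\mathrm{loc}}$, $I_\infty$. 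On the two global regions the case split in Theorem~\ref{thm:ker} collapses, giving the two-sided bounds $K^{\a,\s}(x,y)\simeq x^{2\s-2\a-2}$ for $y<x/2$ and $K^{\a,\s}(x,y)\simeq y^{2\s-2\a-2}$ for $y>2x$, whence $I_0f(x)\simeq x^{2\s-2\a-2}\int_0^{x/2}f\,d\mu_\a$ and $I_\infty f(x)\simeq\int_{2x}^{\infty}y^{2\s-1}f(y)\,dy$; on the strip one has $K^{\a,\s}(x,y)\simeq x^{-2\a-1}$ times $|x-y|^{2\s-1}$, $\log\frac{x}{|x-y|}$, or $x^{2\s-1}$ according to whether $\s<1/2$, $\s=1/2$, or $\s>1/2$.

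For part (i): because $\s>0$ makes $|x-y|^{2\s-1}$ locally integrable while on the strip $d\mu_\a$ and $y^{-a}$ are comparable to $dy$ and $x^{-a}$, the piece $I_{\mathrm{loc}}f$ is finite a.e.\ for \emph{every} $f\in L^p(x^{ap}d\mu_\a)$. Hence $L^p(x^{ap}d\mu_\a)\subset\domain I^{\a,\s}$ if and only if $I_0f$ and $I_\infty f$ are finite a.e.\ for all such $f$; writing $f=x^{-a}g$ with $g\in L^p(d\mu_\a)$ and using that $\int_0^{x/2}$ and $\int_{2x}^{\infty}$ are monotone in $x$, this amounts to $y^{-a}\in L^{p'}((0,1),d\mu_\a)$ together with its counterpart at infinity. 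By H\"older's inequality and its sharpness --- the latter tested at the boundary exponents by a function of the form (power)$\times$(logarithm) when $p>1$, and through the essential-supremum form when $p=1$ --- this is exactly \eqref{cnd17}, with the stated non-strict inequalities at $p=1$.

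For part (ii), sufficiency, assume (a)--(e). After the substitution $f=x^{-a}g$ and the kernel bounds above, the estimates for $I_0$ and $I_\infty$ become the classical two-weight Hardy inequalities for $g\mapsto\int_0^{x/2}(\cdot)\,d\mu_\a$ and $g\mapsto\int_{2x}^{\infty}(\cdot)\,d\mu_\a$ with power weights; evaluating the Muckenhoupt--Bradley criterion for such weights shows these hold precisely under (a),(b),(c) and (a),(b),(d), respectively. For $I_{\mathrm{loc}}$ we decompose $(0,\infty)=\bigcup_{k\in\mathbb Z}[2^k,2^{k+1})$ and rescale the $k$-th block to unit size: $I_{\mathrm{loc}}$ then turns into a fixed truncated fractional integral of order $2\s$ on a fixed bounded interval --- kernel $|u-v|^{2\s-1}$ when $\s<1/2$, $\log\frac1{|u-v|}$ when $\s=1/2$, bounded when $\s>1/2$ --- for which the Hardy--Littlewood--Sobolev inequality on a bounded interval holds exactly under (e). The blocks are then summed in $\ell^{q/p}$, which is legitimate because $q\ge p$ by (a), and the geometric factor $2^{k\gamma}$ generated by the rescaling has exponent $\gamma=0$ precisely when (b) holds; this yields the global bound for $I_{\mathrm{loc}}$. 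Adding the three contributions gives the estimate. Note that (c) and (d) enter only through the global pieces and (e) only through the local one.

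For part (ii), necessity: since the estimate forces $x^{-b}I^{\a,\s}f\in L^q(d\mu_\a)$ for every $f\in L^p(x^{ap}d\mu_\a)$, in particular $I^{\a,\s}f$ is finite a.e., so \eqref{cnd17} holds and the test functions below are admissible. Applying the estimate to dilations $f_r$ of a fixed nonzero nonnegative $f\in C_c^\infty(\mathbb R_+)$ and matching the powers of $r$ produced by \eqref{homo_m} forces the balance condition (b). Granting (b): condition (c) for $p>1$ is already contained in \eqref{cnd17}, while for $p=1$ with $q<\infty$ the strict inequality $a<0$ is obtained by testing $f=\mathbf 1_{(0,1)}$ and estimating $\|x^{-b}I^{\a,\s}f\|_{L^q(d\mu_\a)}$ over $\{x>2\}$ (and for $q=\infty$ by the $L^p\to L^\infty$ kernel criterion together with the behaviour of the kernel near $y=0$); condition (d) then follows from (c) applied to the dual estimate --- exponents $q'\to p'$ with weights $(b,a)$ --- which is equivalent to the original by self-adjointness of $I^{\a,\s}$ in $L^2(d\mu_\a)$. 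Condition (a) follows because, by positivity, the estimate implies the same for $I_{\mathrm{loc}}$, which, being via \eqref{homo_m} and (b) a superposition of uniformly bounded-below copies over arbitrarily many disjoint dyadic blocks, can be bounded only if $q\ge p$ (test with $N$ equal bumps and let $N\to\infty$). Finally, again by positivity and the dyadic rescaling, boundedness of $I_{\mathrm{loc}}$ reduces to that of the truncated fractional integral on a bounded interval, whose failure below --- and, at $p=1$ or $q=\infty$, on --- the Sobolev line forces (e). I expect the main obstacle to be the endpoint bookkeeping: pinning down ``$\le$'' versus ``$<$'' in (c),(d) at $p=q'=1$ and in (e) at $p=1$ or $q=\infty$ requires carefully chosen test functions and the precise limiting form of the local Hardy--Littlewood--Sobolev inequality, and one must also verify that the dyadic sum in the local part is genuinely scale-invariant --- which is exactly the role of condition (b).
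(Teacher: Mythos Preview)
Your proposal is correct and follows the same overall architecture as the paper: the same three-piece decomposition into two Hardy-type tails and a near-diagonal part, with conditions (a),(b),(c) governing the lower tail, (a),(b),(d) the upper tail, and (e) the local piece. There are two technical differences worth noting. For the near-diagonal piece you run a dyadic decomposition, apply Hardy--Littlewood--Sobolev blockwise, and sum in $\ell^{q/p}$; the paper instead substitutes $g(x)=F(x)x^{-1/p}$ to recast the estimate as an $L^p\!\to\!L^q$ bound for convolution on the multiplicative group $(\mathbb R_+,\tfrac{dx}{x})$ with kernel $\chi_{(1/2,2)}(u)\,|u-1|^{2\s-1}$ (or its logarithmic analogue when $\s=1/2$), and then invokes Young's inequality --- this is really your dyadic argument in compact form, and it absorbs the scaling condition (b) automatically rather than through your vanishing-exponent check. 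For the necessity of (a),(c),(d), where you argue via part (i), duality, and $N$-bump testing, the paper reads all three directly off the sharp two-weight characterization of the Hardy operator and its dual (Bradley's theorem, recorded as their Lemma~4.1), which delivers ``if and only if'' for each tail in one stroke and handles the endpoint $\le$/$<$ bookkeeping without separate test functions. Both routes are sound; the paper's is a bit shorter, yours is perhaps more self-contained.
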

%%%%%%%%%%%%%%%%%%%%%%%%%%%%%%%%%%%%%%%%%%%%%%
Notice that (e) is superfluous when $\s > 1/2$, and in case $\s=1/2$ it is equivalent to 
$(p,q)\neq (1,\infty)$. Moreover, because of (b), condition (e) in Theorem \ref{thm:main} 
may be replaced by
\begin{itemize}
\item[(e')] $a+b \ge (2\a+1)\big(\frac{1}q-\frac{1}p\big)$ \quad ($>$ when $p=1$ or $q=\infty$).
\end{itemize}
Finally, observe that (b) is simply forced by the homogeneity \eqref{homo_m}.

It is well known that for $\a=n/2-1$ the setting of the modified Hankel transform corresponds to the radial
framework on $\mathbb{R}^n$, $n \ge 1$. In particular, if $f$ is a radial function on $\mathbb{R}^n$,
$f(x) = f_0(|x|)$, then $-\Delta f(x) = (L_{\a}f_0)(|x|)$ and $I^{\s}f$ and $I^{\a,\s}f_0(|\cdot|)$
coincide up to a constant factor independent of $f$. Moreover, integration of $f$ in $\mathbb{R}^n$
with respect to Lebesgue measure reduces to integration of $f_0$ against $d\mu_{\a}$. 
These standard facts together with Theorem \ref{thm:main} specified to $\a=n/2-1$
lead to the following sharp variant of Theorem \ref{thm:classic} for radially symmetric functions.
\begin{cor} \label{cor:main}
Let $n \ge 1$ and $0 < \s < n/2$. Let $a,b \in \mathbb{R}$ and assume that $1\le p,q \le \infty$. 
The estimate 
\begin{equation*}
\big\||x|^{-b}I^\s f\big\|_{L^q(\mathbb R^n,dx)}\lesssim \big\||x|^{a}f\big\|_{L^p(\mathbb R^n,dx)}
\end{equation*} 
holds uniformly in all radial functions $f \in L^p(\mathbb{R}^n,|x|^{ap}dx)$ 
if and only if the following conditions hold: 
\begin{itemize}
\item[(a)] $p \le q$,
\item[(b)] $\frac{1}{q} = \frac{1}p + \frac{a+b-2\s}{n}$,
\item[(c)] $a < \frac{n}{p'}$ \quad ($\le$ when $p = q'= 1$),
\item[(d)] $b < \frac{n}q$ \quad ($\le$ when $p = q'=1$),
\item[(e)] $a+b \ge (n-1)\big(\frac{1}q-\frac{1}p\big)$ \quad ($>$ when $p=1$ or $q=\infty$).
\end{itemize}
\end{cor}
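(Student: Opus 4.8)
The plan is to read off Corollary~\ref{cor:main} from Theorem~\ref{thm:main} applied with $\a=n/2-1$, via the dictionary between radial analysis on $\mathbb{R}^n$ and the modified Hankel setting already recalled in the paragraph preceding the corollary. For a radial $f$ on $\mathbb{R}^n$, $f(x)=f_0(|x|)$, two facts are used: \emph{(1)} since $d\mu_\a(r)=r^{2\a+1}\,dr=r^{n-1}\,dr$ with $\a=n/2-1$, one has $\||x|^{c}g\|_{L^p(\mathbb{R}^n,dx)}=\omega_{n-1}^{1/p}\|r^{c}g_0\|_{L^p(d\mu_\a)}$ for every radial $g$ and every real exponent, where $\omega_{n-1}=|S^{n-1}|$; and \emph{(2)} $I^\s f$ is again radial, with $I^\s f(x)=c_{n,\s}\,I^{\a,\s}f_0(|x|)$ for a constant $c_{n,\s}>0$ independent of $f$. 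Granting these, the radial Stein--Weiss estimate for exponents $(p,q,a,b)$ on $\mathbb{R}^n$ holds \emph{if and only if} the estimate of Theorem~\ref{thm:main}(ii) holds for the same $(p,q,a,b)$ with $\a=n/2-1$, because $f\mapsto f_0$ is, up to the harmless constants $\omega_{n-1}^{1/p},\omega_{n-1}^{1/q}$ and $c_{n,\s}$, a bijective isometry from the radial subspace of $L^p(\mathbb{R}^n,|x|^{ap}dx)$ onto all of $L^p(x^{ap}d\mu_\a)$. In particular no sharpness is lost by restricting to radial functions: the extremal/counterexample functions used for the necessity half of Theorem~\ref{thm:main}(ii) are profiles of genuine radial functions on $\mathbb{R}^n$. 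It then remains only to substitute $2\a+2=n$ and $2\a+1=n-1$: conditions (a)--(d) of Theorem~\ref{thm:main}(ii) become (a)--(d) of the corollary verbatim, while condition (e) of the corollary is exactly the reformulation (e$'$) of condition (e) of Theorem~\ref{thm:main}, with the same strict inequality imposed at $p=1$ and $q=\infty$; conditions (b)--(d) moreover force \eqref{cnd17}, so $I^{\a,\s}$ — equivalently $I^\s$ on radial functions — is a.e.\ defined on the pertinent weighted $L^p$ space.

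The only statement that is not purely formal is fact \emph{(2)}; although the text cites it as standard, I would argue it at the level of heat kernels. Averaging the Gauss--Weierstrass kernel $(4\pi t)^{-n/2}\exp(-|x-y|^2/(4t))$ over the sphere $|y|=s$ with $|x|=r$ fixed, and using the elementary identity $\int_0^\pi e^{z\cos\t}\sin^{n-2}\t\,d\t\simeq z^{-(n/2-1)}I_{n/2-1}(z)$, one obtains a fixed positive multiple of $W_t^{n/2-1}(r,s)=\frac{1}{2t}\exp(-(r^2+s^2)/(4t))(rs)^{-(n/2-1)}I_{n/2-1}(rs/(2t))$. Multiplying by $t^{\s-1}/\Gamma(\s)$ and integrating in $t$ over $(0,\infty)$, the left-hand side produces, up to a constant, the spherical average of the classical Riesz kernel $|x-y|^{2\s-n}$ — the normalization being pinned down by the subordination identity $\frac{1}{\Gamma(\s)}\int_0^\infty(4\pi t)^{-n/2}e^{-|x|^2/(4t)}t^{\s-1}\,dt=\frac{\Gamma(n/2-\s)}{4^\s\pi^{n/2}\Gamma(\s)}|x|^{2\s-n}$ — whereas the right-hand side equals $K^{n/2-1,\s}(r,s)$ by the definition of the Hankel potential kernel. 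Fubini then yields the claimed proportionality of the operators, first on nonnegative $f$ and then in general. An alternative route invokes Proposition~\ref{pro:neg_m} together with the identity $-\Delta f(x)=(L_\a f_0)(|x|)$ for radial $f$: these identify $(L_\a)^{-\s}$ acting on profiles with $(-\Delta)^{-\s}$ acting on radial functions, and the Euclidean fact that $I^\s$ is a fixed constant multiple of $(-\Delta)^{-\s}$ settles the matter on a dense class, positivity of the kernels upgrading it to all admissible $f$.

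The step I expect to need the most care is fact \emph{(2)}: the constant-tracking just sketched, and the degenerate case $n=1$, where $\a=-1/2$, $d\mu_{-1/2}=dx$ on $\mathbb{R}_+$, and ``radial'' means ``even''. There the spherical average collapses to the symmetrization $f_0(y)\mapsto\bigl(|x-y|^{2\s-1}+|x+y|^{2\s-1}\bigr)$, and one checks directly that this reproduces the three $\s$-regimes of Theorem~\ref{thm:ker} with $\a=-1/2$: the term $|x-y|^{2\s-1}$ dominates for $\s<1/2$, the term $(x+y)^{2\s-1}$ dominates for $\s>1/2$, and the borderline $\s=1/2$ yields the logarithm. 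No further ingredient is required. It is worth closing with the remark that, in contrast to Theorem~\ref{thm:classic}, Corollary~\ref{cor:main} admits $p$ or $q$ equal to $1$ or $\infty$ and records the corresponding endpoint behaviour, which is precisely the sense in which it refines the results of \cite{R,DDD,Duo}.
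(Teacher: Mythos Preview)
Your proposal is correct and follows exactly the paper's approach: the corollary is stated as an immediate consequence of Theorem~\ref{thm:main} specialized to $\a=n/2-1$ via the radial dictionary recalled just before it, and you carry this out explicitly, supplying the heat-kernel justification of fact~(2) that the paper simply cites as standard. One small imprecision: in your $n=1$ discussion the cases $\s\ge 1/2$ cannot occur since $0<\s<n/2=1/2$, so only the $|x-y|^{2\s-1}$ regime is relevant there.
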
 
Under the assumption $1<p\le q<\infty$ the sufficiency part of Corollary \ref{cor:main} 
was proved by Rubin \cite[Theorem 3]{R} already in 1983. This remarkable result was overlooked
and rediscovered recently by De N\'apoli, Drelichman and Dur\'an \cite[Theorem 1.2]{DDD};
see also 
\cite[Section 5]{DDD} for an interesting application to weighted imbedding theorems.
Previous partial results in the same direction can be found in \cite{GST,HK,V}, 
see the comments in \cite[Section 1]{DDD}.
As for the necessity part, under the assumption $1\le p\le q<\infty$ this question has recently been
studied by Duoandikoetxea \cite[Theorem 5.1]{Duo};
see also \cite[Remark 4.2]{DDD}. Our result completes the previous efforts by 
including $q=\infty$ and also by proving the necessity of the condition $p\le q$.

Given $\s>0$, we also consider analogues of the classical Bessel potentials, $(I+L_{\alpha})^{-\s}$.
These are well defined spectrally on the whole $L^2(d\mu_{\a})$, 
and can be extended to more general functions by
means of an integral representation. Since the integral kernel of the semigroup generated by
$-(I+L_{\a})$ is $\{\exp(-t) W_t^{\a}\}_{t>0}$, we introduce the potential operator
$$
J^{\a,\s}f(x) = \int_0^{\infty} H^{\a,\s}(x,y) f(y) \, d\mu_{\a}(y), \qquad x > 0,
$$
where 
$$
H^{\a,\s}(x,y) = \frac{1}{\Gamma(\s)} \int_0^{\infty} e^{-t} W_t^{\a}(x,y) t^{\s-1}\, dt.
$$
Clearly, $0 < H^{\a,\s}(x,y) < K^{\a,\s}(x,y)$ for all $x,y>0$. 
This implies, in particular, that $J^{\a,\s}$ inherits positive $L^p-L^q$ mapping properties of $I^{\a,\s}$,
and the same is true for weak type estimates. An analogous remark pertains to Bessel and Riesz potentials
in the two other settings of this paper.

The next result provides
qualitatively sharp description of the behavior of $H^{\a,\s}(x,y)$.
\begin{thm} \label{thm:ker_bes}
Let $\a > -1$ and let $\s > 0$. The following estimates hold uniformly in $x,y >0$.
\begin{itemize}
\item[(i)] If $x+y \le 1$, then
$$
H^{\a,\s}(x,y) \simeq \chi_{\{\s > \a+1\}} + \chi_{\{\s=\a+1\}} \log\frac{1}{x+y} + (x+y)^{-2\a-1}
	\begin{cases}
		|x-y|^{2\s-1}, & \s < 1/2,\\
		\log\frac{2(x+y)}{|x-y|}, & \s=1/2,\\
		(x+y)^{2\s-1}, & \s > 1/2.
	\end{cases}
$$
\item[(ii)] If $x+y > 1$, then
$$
H^{\a,\s}(x,y) \simeq \simeq (x+y)^{-2\a-1} \exp\big(-c|x-y|\big)
	\begin{cases}
		|x-y|^{2\s-1}, & \s < 1/2,\\
		1+\log^+ \frac{1}{|x-y|}, & \s=1/2,\\
		1, & \s> 1/2.
	\end{cases}
$$
\end{itemize}
\end{thm}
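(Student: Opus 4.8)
The plan is to imitate the argument used for the Riesz kernel in Theorem~\ref{thm:ker}, carrying along the extra factor $e^{-t}$. First I would reduce the kernel to an elementary integral. Substituting the closed form of $W_t^{\a}$ and using the global estimate $I_{\a}(u)\simeq u^{\a}(1+u)^{-\a-1/2}e^{u}$, $u>0$ — which follows by gluing the two asymptotics in \eqref{asym_I} with the continuity and strict positivity of $I_{\a}$ on $(0,\infty)$ — together with $-\frac{x^2+y^2}{4t}+\frac{xy}{2t}=-\frac{(x-y)^2}{4t}$, one obtains, uniformly in $x,y>0$,
$$
H^{\a,\s}(x,y)\simeq\int_0^{\infty}e^{-t}\,t^{\s-3/2}(t+xy)^{-\a-1/2}\exp\Big(-\frac{(x-y)^2}{4t}\Big)\,dt.
$$
The analogous reduction without the factor $e^{-t}$ gives exactly the integral analyzed in the proof of Theorem~\ref{thm:ker}, and I will quote that analysis below. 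Throughout, the estimates come from splitting $(0,\infty)$ at the natural thresholds $t\simeq 1$, $t\simeq xy$ and $t\simeq(x-y)^2$, using $(x-y)^2+xy\simeq(x+y)^2$.

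For part (i), $x+y\le1$, I would split $\int_0^{\infty}=\int_0^{1}+\int_1^{\infty}$. On $(0,1)$ one has $e^{-t}\simeq1$, so the first integral is comparable to the corresponding piece of the Riesz integral, while on $(1,\infty)$ one has $t+xy\simeq t$ and $(x-y)^2\le1\le t$, whence the second integral is $\simeq\int_1^{\infty}e^{-t}t^{\s-2-\a}\,dt\simeq1$. If $\s<\a+1$, then the $(0,1)$-piece equals the full Riesz integral minus a $\simeq1$ tail, and since $K^{\a,\s}(x,y)\gtrsim1$ on $\{x+y\le1\}$ (an elementary check in each of the three sub-cases of Theorem~\ref{thm:ker}), one concludes $H^{\a,\s}\simeq K^{\a,\s}+1\simeq K^{\a,\s}$, i.e.\ the asserted estimate with the two $\chi$-terms absent. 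If $\s\ge\a+1$, I would instead estimate $\int_0^{1}t^{\s-3/2}(t+xy)^{-\a-1/2}e^{-(x-y)^2/4t}\,dt$ directly, as in the proof of Theorem~\ref{thm:ker}: the region $\max((x-y)^2,xy)\lesssim t\le1$, where the integrand is $\simeq t^{\s-2-\a}$, contributes $\int_{\simeq(x+y)^2}^{1}t^{\s-2-\a}\,dt$, which is $\simeq1$ if $\s>\a+1$ and $\simeq\log\frac1{x+y}$ if $\s=\a+1$, while the regions $t\lesssim(x+y)^2$ reproduce the $(x+y)^{-2\a-1}[\cdots]$ term; adding the $\simeq1$ tail and organizing with ``$+$'' yields (i).

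For part (ii), $x+y>1$, the factor $e^{-t}$ is essential and exponential decay in $|x-y|$ is expected. The key algebraic identity is $t+\frac{(x-y)^2}{4t}=\big(\sqrt t-\frac{|x-y|}{2\sqrt t}\big)^2+|x-y|$, so that $e^{-t}e^{-(x-y)^2/4t}=e^{-|x-y|}e^{-(\sqrt t-|x-y|/(2\sqrt t))^2}$ and the Gaussian factor concentrates the integral near $t\simeq|x-y|/2$. If $|x-y|\le1$ this concentration is mild; here $x\simeq y$ forces $xy\simeq(x+y)^2>1$, hence $t+xy\simeq xy$ on the dominant range $t\lesssim1$, and $H^{\a,\s}(x,y)\simeq(x+y)^{-2\a-1}\int_0^{\infty}e^{-t}t^{\s-3/2}e^{-(x-y)^2/4t}\,dt$; the last integral is elementary and equals, up to constants, $1$, $1+\log^{+}\frac1{|x-y|}$, or $|x-y|^{2\s-1}$ according as $\s>\tfrac12$, $\s=\tfrac12$, $\s<\tfrac12$, and since $e^{-c|x-y|}\simeq1$ this is the claim. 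If $|x-y|>1$, I would split $(0,\infty)$ at $\tfrac12|x-y|$ and $2|x-y|$: on the middle interval $t+\frac{(x-y)^2}{4t}\simeq|x-y|$; on $(0,\tfrac12|x-y|)$ the integral concentrates at the right endpoint (by the identity above); on $(2|x-y|,\infty)$ the bare factor $e^{-t}$ gives decay $e^{-2|x-y|}\times(\text{power})$. Collecting, $H^{\a,\s}(x,y)\simeq\simeq e^{-c|x-y|}\,|x-y|^{\s-1/2}(|x-y|+xy)^{-\a-1/2}$; since $(x+y)^2\simeq(x-y)^2+xy$ and, per the convention governing ``$\simeq\simeq$'', every power of $|x-y|$ is absorbed by the exponential, checking the two sub-cases $xy\gtrless(x-y)^2$ identifies this with $(x+y)^{-2\a-1}e^{-c|x-y|}$, which — as $|x-y|>1$ makes the bracketed $\s$-factor in (ii) itself $\simeq\simeq1$ — is exactly (ii).

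I expect the main obstacle to be precisely the regime $x+y>1$, $|x-y|>1$: the naive bound $e^{-t-(x-y)^2/4t}\le e^{-(x-y)^2/4t}$ loses a factor $e^{-|x-y|/2}$ against the true rate $e^{-|x-y|}$, so one must genuinely exploit the concentration of $e^{-t-(x-y)^2/4t}$ around $t=|x-y|/2$ (via the completing-the-square identity) to get the exponent right, and then carefully bookkeep which powers of $x+y$ survive while which powers of $|x-y|$ are legitimately swallowed by the exponential — the only point where the flexibility of ``$\simeq\simeq$'' is used in an essential way. The remaining work, namely tracking the interplay of the three thresholds $t\simeq1$, $t\simeq xy$, $t\simeq(x-y)^2$ across the full parameter range $\a>-1$, $\s>0$, is routine but somewhat lengthy.
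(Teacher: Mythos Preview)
Your approach differs from the paper's: rather than splitting the $t$-integral at $t=xy$ and invoking Lemma~\ref{lem:E} (the sharp $E_A(T,S)$ estimates) as the paper does, you keep the global form $H^{\a,\s}(x,y)\simeq\int_0^\infty e^{-t}t^{\s-3/2}(t+xy)^{-\a-1/2}e^{-(x-y)^2/4t}\,dt$ and, for (ii), extract the exponential via $t+\tfrac{(x-y)^2}{4t}=(\sqrt t-\tfrac{|x-y|}{2\sqrt t})^2+|x-y|$. For (i) the comparison with the Riesz integral is efficient; the paper's route via $E_A$ is more modular but leans on a prepackaged lemma. Both are viable.

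There is, however, a genuine gap in your handling of (ii) in the sub-case $|x-y|\le1$. You assert ``here $x\simeq y$ forces $xy\simeq(x+y)^2>1$'', but $x+y>1$ and $|x-y|\le1$ do \emph{not} imply $x\simeq y$: take $x=10^{-2}$, $y=1$. Then $xy=10^{-2}$ while $(x+y)^2\simeq1$, and your step $(t+xy)\simeq xy$ on the dominant range $t\lesssim1$ fails --- in fact $t+xy\simeq t$ there. The asserted estimate is still true in this corner (both sides are $\simeq1$), but your argument does not cover it. The remedy is exactly the comparable/non-comparable split the paper uses: when $y\le x\le 2y$ your argument goes through verbatim; when $x>2y$ (or symmetrically) one has $|x-y|\simeq x+y$, so the constraint $|x-y|\le1$ pins $x+y$ to a bounded interval and the whole estimate collapses to $\simeq1$. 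A smaller caveat applies to the tail $(0,\tfrac12|x-y|)$ in the regime $|x-y|>1$: ``concentrates at the right endpoint'' is correct for the exponential, but you still need to control $\int_0^{|x-y|/2}e^{-(\sqrt t-|x-y|/(2\sqrt t))^2}t^{\s-3/2}(t+xy)^{-\a-1/2}\,dt$ against the algebraic factor when $xy$ is small --- doable, but not as automatic as your sketch suggests.
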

Thus, among other things, we see that the kernel $H^{\a,\s}(x,y)$ behaves in an essentially different way
depending on whether $(x,y)$ is close to the origin of $\mathbb{R}^2$ or far from it. Moreover, the local
behavior (i) is exactly the same as that of the Riesz potential kernel associated with Laguerre function
expansions of convolution type, see \cite[Theorem 2.1 (i)]{NoSt5}. Furthermore, for $\s < \a+1$
we have $H^{\a,\s}(x,y)\simeq K^{\a,\s}(x,y)$ when $x$ and $y$ stay bounded.

The description of $H^{\a,\s}(x,y)$ from Theorem \ref{thm:ker_bes}
enables a direct analysis of the potential operator $J^{\a,\s}$.
In particular, it allows us to characterize those $1\le p,q \le \infty$, for which $J^{\a,\s}$ is
$L^p-L^q$ bounded. 
Notice that the statement below implicitly contains the fact that $L^p(d\mu_{\a})\subset \domain J^{\a,\s}$
for all $1\le p \le \infty$. Moreover, specified to $p=2$ it allows one to check that 
${J}^{\a,\s}$ coincides in $L^2(d\mu_{\a})$ with the negative power $(I+{L}_{\a})^{-\s}$ defined spectrally.
\begin{thm} \label{thm:LpLq_bes}
Let $\a > -1$, $\s>0$ and $1\le p,q \le \infty$.
\begin{itemize}
\item[(a)] If $\a \ge -1\slash 2$, then $J^{\a,\s}$ is bounded from $L^p(d\mu_{\a})$ to $L^q(d\mu_{\a})$
if and only if
$$
\frac{1}{p} - \frac{\s}{\a+1} \le \frac{1}q \le \frac{1}{p} \quad \textrm{and} \quad
\bigg(\frac{1}p,\frac1{q}\bigg) \notin
    \bigg\{ \Big(\frac{\s}{\a+1},0\Big),\Big(1,1-\frac{\s}{\a+1}\Big)\bigg\}.
$$
\item[(b)] If $\a < -1\slash 2$, then $J^{\a,\s}$ is bounded from $L^p(d\mu_{\a})$ to $L^q(d\mu_{\a})$
if and only if $p=q$.
\end{itemize}
\end{thm}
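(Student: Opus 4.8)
The plan is to split the kernel according to the distance of $(x,y)$ from the origin of $\mathbb{R}_+^2$. Write $H^{\alpha,\sigma}=H^{\alpha,\sigma}_{\mathrm{loc}}+H^{\alpha,\sigma}_{\mathrm{glob}}$, where $H^{\alpha,\sigma}_{\mathrm{loc}}$ and $H^{\alpha,\sigma}_{\mathrm{glob}}$ are the restrictions of $H^{\alpha,\sigma}$ to the regions $x+y\le 1$ and $x+y>1$, and let $J^{\alpha,\sigma}=J_{\mathrm{loc}}+J_{\mathrm{glob}}$ be the associated operators. Since all three kernels are nonnegative, $J^{\alpha,\sigma}$ dominates $J_{\mathrm{loc}}$ and $J_{\mathrm{glob}}$ on nonnegative functions, so every boundedness of $J^{\alpha,\sigma}$ descends to both pieces; this is the source of the necessary conditions. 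As a preliminary I would record that the Hankel heat kernel is conservative, $\int_0^\infty W_t^{\alpha}(x,y)\,d\mu_{\alpha}(y)=1$ for all $x,t>0$, which follows from the explicit formula for $W_t^{\alpha}$ together with a standard evaluation of an integral of $I_{\alpha}$ against a Gaussian. Because $W_t^{\alpha}$ is nonnegative and symmetric, $\{\exp(-tL_{\alpha})\}$ is then a positive contraction semigroup on every $L^p(d\mu_{\alpha})$, hence $J^{\alpha,\sigma}=\Gamma(\sigma)^{-1}\int_0^\infty e^{-t}\exp(-tL_{\alpha})\,t^{\sigma-1}\,dt$ is a contraction on every $L^p(d\mu_{\alpha})$; in particular $\|H^{\alpha,\sigma}(x,\cdot)\|_{L^1(d\mu_{\alpha})}=1$ for each $x$. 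This already yields the ``$p=q$ suffices'' direction of (b), while ``only $p=q$'' will come from testing $J_{\mathrm{glob}}$.

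For sufficiency in (a) I would first extract from Theorem \ref{thm:ker_bes} the sharp fact that $\sup_{x>0}\|H^{\alpha,\sigma}(x,\cdot)\|_{L^{r}(d\mu_{\alpha})}<\infty$ exactly when $r<\frac{\alpha+1}{\alpha+1-\sigma}$ (understood as ``$r<\infty$'' when $\sigma=\alpha+1$ and as ``$r\le\infty$'' when $\sigma>\alpha+1$). Indeed the global part has a finite, $x$-uniform $L^{r}$-norm for every $r$ thanks to the exponential factor and the bound $(x+y)^{-2\alpha-1}\le 1$ valid on $x+y>1$ (this is where $\alpha\ge -1/2$ is used, also to keep the relevant power of $x$ nonpositive as $x\to\infty$); and the local part is governed by $\int(K^{\alpha,\sigma})^{r}\,d\mu_{\alpha}$ over $x+y\le 1$, which stays $x$-uniformly bounded below the threshold and acquires a $\log(1/x)$ blow-up precisely at $r=\frac{\alpha+1}{\alpha+1-\sigma}$. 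The symmetry of the kernel together with the Minkowski and Hölder inequalities (the generalized Young estimate for integral operators) then gives boundedness $L^p(d\mu_{\alpha})\to L^q(d\mu_{\alpha})$ whenever $\frac1q+1=\frac1p+\frac1r$ for an admissible $r$, that is, for all $(\frac1p,\frac1q)$ with $\frac1p-\frac{\sigma}{\alpha+1}<\frac1q\le\frac1p$ (the case $r=1$ recovering the diagonal). The only configuration missed is the Hardy--Littlewood--Sobolev line $\frac1q=\frac1p-\frac{\sigma}{\alpha+1}$, which occurs only when $\sigma<\alpha+1$; there I would use $H^{\alpha,\sigma}\le K^{\alpha,\sigma}$ and quote Theorem \ref{thm:LpLq}(ii) for $I^{\alpha,\sigma}$ to obtain boundedness on the open segment $1<p<\frac{\alpha+1}{\sigma}$. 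Assembling these gives exactly the closed strip $\frac1p-\frac{\sigma}{\alpha+1}\le\frac1q\le\frac1p$ with the two corner points removed.

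For necessity I would test on three families, each routed through $J^{\alpha,\sigma}\ge J_{\mathrm{glob}},J_{\mathrm{loc}}$. Applying $J_{\mathrm{glob}}$ to the spreading functions $\chi_{(1,R)}$ with $R\to\infty$ (for which the lower estimate of Theorem \ref{thm:ker_bes}(ii) gives $J_{\mathrm{glob}}\chi_{(1,R)}\gtrsim\chi_{(2,R-1)}$) forces $\frac1q\le\frac1p$ for every $\alpha>-1$. Applying $J_{\mathrm{glob}}$ to the travelling bumps $\chi_{(n,n+1)}$ with $n\to\infty$ (for which $J_{\mathrm{glob}}\chi_{(n,n+1)}\gtrsim\chi_{(n,n+1)}$) forces $\frac1q\ge\frac1p$ when $2\alpha+1<0$; combined with the previous constraint this yields $p=q$ and completes (b), and it is vacuous when $\alpha\ge -1/2$. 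Applying $J_{\mathrm{loc}}$ to $\chi_{(0,\varepsilon)}$ with $\varepsilon\to 0$ and using the homogeneity $K^{\alpha,\sigma}(x,y)=r^{2(\alpha+1-\sigma)}K^{\alpha,\sigma}(rx,ry)$ forces $\frac1q\ge\frac1p-\frac{\sigma}{\alpha+1}$ (this being automatic when $\sigma\ge\alpha+1$). Finally, for the two excluded corners: since $J^{\alpha,\sigma}$ is self-adjoint in $L^2(d\mu_{\alpha})$ with symmetric kernel, ``$L^1\to L^q$ bounded'' is equivalent to ``$L^{q'}\to L^\infty$ bounded'', so it suffices to rule out the single endpoint $L^1(d\mu_{\alpha})\to L^{(\alpha+1)/(\alpha+1-\sigma)}(d\mu_{\alpha})$; testing on $\mu_{\alpha}((0,\varepsilon))^{-1}\chi_{(0,\varepsilon)}$ and using the sharp lower bound for $H^{\alpha,\sigma}$ near the origin shows the target norm is $\gtrsim(\log(1/\varepsilon))^{(\alpha+1-\sigma)/(\alpha+1)}\to\infty$ while the $L^1$-norm is $1$.

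The step I expect to be the real obstacle is the sharp identification of the threshold $r=\frac{\alpha+1}{\alpha+1-\sigma}$ for $\sup_x\|H^{\alpha,\sigma}(x,\cdot)\|_{L^r(d\mu_{\alpha})}$, together with the matching logarithmic blow-up that drives the corner counterexamples: both require pushing the two-sided estimates of Theorem \ref{thm:ker_bes} (with their $\log$ terms) into the regime $x\to 0^+$, splitting the $y$-integral over $y\lesssim x$, $y\simeq x$, $y\gtrsim x$, and handling the subcases $\sigma<1/2$ vs.\ $\sigma\ge 1/2$ and $\sigma<\alpha+1$ vs.\ $\sigma\ge\alpha+1$ each time. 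The remaining ingredients --- conservativity of the heat semigroup, the generalized Young estimate, the transfer of the Riesz-potential endpoint via Theorem \ref{thm:LpLq}(ii), and the three test-function computations --- are then routine.
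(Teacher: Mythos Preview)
Your approach is essentially the same as the paper's, though organized somewhat differently. The paper splits $J^{\alpha,\sigma}=J^{\alpha,\sigma}_0+J^{\alpha,\sigma}_\infty$ along $\max(x,y)=2$ rather than $x+y=1$, and then imports the analysis of the local piece $J^{\alpha,\sigma}_0$ wholesale from \cite{NoSt5} and \cite{NR} (where the identical local kernel appears in the Laguerre setting). For the global piece the paper computes $\|H^{\alpha,\sigma}_\infty(x,\cdot)\|_{L^r(d\mu_\alpha)}\simeq(1\vee x)^{(2\alpha+1)(1/r-1)}$ precisely (Lemma~\ref{lem:ass}), then runs Minkowski $\to$ duality $\to$ Riesz--Thorin, which is morally the same as your direct Young-type argument based on $\sup_x\|H^{\alpha,\sigma}(x,\cdot)\|_r$. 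Your counterexamples for $p>q$, for $\alpha<-1/2$, and for the corner points are the same in spirit (and for the $\chi_{(n,n+1)}$ test, identical); the paper defers the endpoint-line and corner failures to \cite{NoSt5}, whereas you spell them out via $\chi_{(0,\varepsilon)}$ and homogeneity. Your version is thus more self-contained, the paper's more modular.

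One overstatement to fix: the claim that the global kernel has a finite, $x$-uniform $L^r$-norm ``for every $r$'' is false when $\sigma\le 1/2$, because of the diagonal singularity $|x-y|^{2\sigma-1}$ (or the logarithm when $\sigma=1/2$): for $x>2$ one has $\|H^{\alpha,\sigma}_{\mathrm{glob}}(x,\cdot)\|_{L^r(d\mu_\alpha)}=\infty$ once $r\ge\frac{1}{1-2\sigma}$ (cf.\ \eqref{bb} in Lemma~\ref{lem:ass}). This does not damage your argument, since for $\alpha\ge-1/2$ one checks $\frac{\alpha+1}{\alpha+1-\sigma}\le\frac{1}{1-2\sigma}$, so every $r$ below your threshold is automatically admissible for the global piece as well; but the sentence should be corrected accordingly.
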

The sufficiency part of Theorem \ref{thm:LpLq_bes} (a) was known, see \cite{GA} or 
\cite[pp.\,128--129]{AUB} (with $n=1$ and an appropriate reformulation; see also the comments below). 
Here, however, we give
a direct proof which offers a better insight into the structure of $J^{\a,\s}$.
The necessity part, as well as item (b), is new.

In the case $\a\ge -1\slash 2$, the Riesz and Bessel potential operators considered in this paper 
are consistent with those defined by means of a generalized convolution structure in \cite{MS, GA, AUB}.
More precisely, it is well known (see e.g.\  \cite{MS}) that there is a convolution structure 
$*_\a$ defined on functions on $\mathbb{R}_+$ turning $L^1(\mathbb{R}_+, d\mu_\a)$ into a commutative 
Banach algebra, and such that $H_\a(f*_{\a} g)=H_\a f\cdot H_\a g$ (for $\a=n\slash 2-1$ this structure is
inherited from the Euclidean convolution on $\mathbb R^n$). Then the operators $I^{\a,\s}$ and $J^{\a,\s}$
coincide, up to multiplicative constants,
with the $*_\a$-convolution operators with kernels $K_*^{\a,\s}(x)=x^{2\s-2\a-2}$ and
$H_*^{\a,\s}(x)=\int_0^\infty \exp(-x^2\slash (4t)-t)t^{\s-\a-2}\,dt$, respectively. Moreover,
$K^{\a,\s}(x,y)=c_{\a,\s}T^y_\a K_*^{\a,\s}(x)$ and $H^{\a,\s}(x,y)=C_{\a,\s}T^y_\a H_*^{\a,\s}(x)$, where
$T^y_\a$ denotes the generalized translation operator 
such that $f*_\a g(x)=\int_0^\infty T^y_\a f(x)g(y)\,d\mu_\a(y)$.

%%%%%%%%%%%%%%%%%%%%%%%%%%%%%%%%%%%%%%%%%%%%%%%%%%%%%%%%%%%%%%%%%%%%%%%%%%%%%%%%%%
\subsection{The setting of the non-modified Hankel transform} \label{ssec:Hankel}
It has already been mentioned that this setting is intertwined with the previous one. Thus some of 
the results obtained in the framework of the modified Hankel transform can be
transmitted to the Hankel transform case.

The Hankel heat kernel in this setting is given by
$$
\mathcal W_t^{\a}(x,y) = \int_0^\infty e^{-u^2t}\varphi_\a(xu)\varphi_\a(yu)\,du 
= (xy)^{\a+1/2}W_t^{\a}(x,y).
$$
The related Riesz potential kernel is
$$
\mathcal K^{\a,\s}(x,y) =\frac{1}{\Gamma(\s)} \int_{0}^{\infty} \mathcal W_t^{\a}(x,y) t^{\s-1}\, dt 
= (xy)^{\a+1/2}K^{\a,\s}(x,y),
$$
and the corresponding Riesz potential operator is
\begin{equation} \label{pot_m}
\mathcal I^{\a,\s}f(x)=
\int_{0}^{\infty}\mathcal K^{\a,\s}(x,y)f(y)\,dy=x^{\a+1/2}I^{\a,\s}\big((\cdot)^{-\a-1/2}f\big)(x), 
\qquad x>0. 
\end{equation}
In view of the above connection between the Hankel potential kernels, Theorem \ref{thm:ker}
gives sharp estimates for $\mathcal K^{\a,\s}(x,y)$ as well. 
It is therefore clear that the natural domains 
of $\mathcal I^{\a,\s}$ and $I^{\a,\s}$ satisfy 
\begin{equation} \label{dd}
\domain \mathcal I^{\a,\s}=M_{\a+1/2}\big(\domain I^{\a,\s}\big).
\end{equation}
The corresponding homogeneity properties of the heat kernel and the potential kernel read as
$$
\mathcal W_t^{\a}(x,y) = r\mathcal W_{r^2t}^{\a}(rx,ry),\qquad \mathcal K^{\a,\s}(x,y) = 
r^{1-2\s}\mathcal K^{\a,\s}(rx,ry),\qquad x,y,r>0,
$$
and lead to homogeneity of the potential operator,
$$ 
\mathcal I^{\a,\s}(f_r) = r^{-2\s}(\mathcal I^{\a,\s}f)_r,\qquad r>0.
$$ 

We define the negative power $(\mathcal L_\a)^{-\s}$ and its domain $\domain (\mathcal L_\a)^{-\s}$
by replacing $H_\a$ and $L^2(d\mu_\a)$ by $\mathcal H_\a$ and $L^2(dx)$, respectively,
in the corresponding definitions
in the modified Hankel transform setting. 
It is immediate to check that
$(L_\a)^{-\s}$ and $(\mathcal L_\a)^{-\s}$ are also intertwined in the sense that
$\domain(\mathcal L_\a)^{-\s}=M_{\a+1/2}\big(\domain( L_\a)^{-\s}\big)$, and
$$
(\mathcal L_\a)^{-\s}f=M_{\a+1/2}(L_\a)^{-\s}\big(M_{-\a-1/2}f\big), 
\qquad f\in \domain(\mathcal L_\a)^{-\s}.
$$
Moreover, the fact that $H_\a(C_c^\infty)\subset \domain I^{\a,\s}$ together with 
\eqref{dd} show that $\mathcal H_\a(C_c^\infty)\subset \domain \mathcal I^{\a,\s}$ and
$\mathcal H_\a(C_c^\infty)$ is a dense subspace of $L^2(dx)$.
These facts combined with Proposition \ref{pro:neg_m} justify the following.
%%%%%%%%%%%%%%%%%%%%%%%%%%%%%%%%%%%%%%%%%%%%%%
\begin{propo} \label{pro:neg}
Let $\a>-1$ and $0<\s<1/2$. For every $f\in \mathcal H_\a(C_c^\infty)$ we have
\begin{equation*}
\mathcal I^{\a,\s}f(x)=(\mathcal L_\a)^{-\s}f(x),\qquad \textrm{a.a.}\;\; x >0.
\end{equation*}
\end{propo}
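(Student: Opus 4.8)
The plan is to reduce Proposition \ref{pro:neg} to the already-established Proposition \ref{pro:neg_m} by exploiting the intertwining isomorphism $M_{\a+1/2}$, which links the modified and non-modified settings at every level (transforms, Bessel operators, negative powers, potential operators, and domains). First I would fix $f\in\mathcal H_\a(C_c^\infty)$ and write $f=\mathcal H_\a h$ with $h\in C_c^\infty$. Using $\mathcal H_\a\circ M_{\a+1/2}=M_{\a+1/2}\circ H_\a$, one sees that $f=M_{\a+1/2}\big(H_\a h\big)$, so $M_{-\a-1/2}f=H_\a h\in H_\a(C_c^\infty)$; call this function $\tilde f$.

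Next I would invoke the two intertwining identities already recorded in the excerpt: on the potential-operator side,
$$
\mathcal I^{\a,\s}f(x)=x^{\a+1/2}\,I^{\a,\s}\big(M_{-\a-1/2}f\big)(x)=M_{\a+1/2}\big(I^{\a,\s}\tilde f\big)(x),
$$
from \eqref{pot_m}, and on the spectral side,
$$
(\mathcal L_\a)^{-\s}f=M_{\a+1/2}(L_\a)^{-\s}\big(M_{-\a-1/2}f\big)=M_{\a+1/2}\big((L_\a)^{-\s}\tilde f\big),
$$
which is valid here because $\tilde f=M_{-\a-1/2}f\in\domain(L_\a)^{-\s}$; indeed, membership of $f$ in $\domain(\mathcal L_\a)^{-\s}$ (guaranteed since $\mathcal H_\a(C_c^\infty)\subset\domain\mathcal I^{\a,\s}$ and, at $p=2$, the potential operator agrees with the spectrally defined negative power — alternatively one checks directly that $\tilde f\in H_\a(C_c^\infty)\subset\domain(L_\a)^{-\s}$ as in the proof of Proposition \ref{pro:neg_m}) transfers to $\tilde f$ via $\domain(\mathcal L_\a)^{-\s}=M_{\a+1/2}\big(\domain(L_\a)^{-\s}\big)$. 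Since $\tilde f\in H_\a(C_c^\infty)$, Proposition \ref{pro:neg_m} yields $I^{\a,\s}\tilde f=(L_\a)^{-\s}\tilde f$ a.e. Applying $M_{\a+1/2}$ to both sides and reading off the two displays above gives $\mathcal I^{\a,\s}f=(\mathcal L_\a)^{-\s}f$ a.e., which is the claim.

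There is no serious obstacle; the argument is essentially bookkeeping with the multiplier $M_{\a+1/2}$. The only point demanding a line of care is the domain issue — one must confirm that $\tilde f=M_{-\a-1/2}f$ lies in both $\domain I^{\a,\s}$ and $\domain(L_\a)^{-\s}$ so that Proposition \ref{pro:neg_m} applies. For the first this is automatic from \eqref{dd} together with $\mathcal H_\a(C_c^\infty)\subset\domain\mathcal I^{\a,\s}$; for the second it follows either from the intertwining of domains $\domain(\mathcal L_\a)^{-\s}=M_{\a+1/2}\big(\domain(L_\a)^{-\s}\big)$ applied to $f\in\domain(\mathcal L_\a)^{-\s}$, or more directly by noting $\tilde f=H_\a h\in H_\a(C_c^\infty)$ and repeating the short verification from the proof of Proposition \ref{pro:neg_m} that such functions satisfy $(\cdot)^{-2\s}H_\a\tilde f\in L^2(d\mu_\a)$. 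One should also note the restriction $0<\s<1/2$ appearing in the statement: this is exactly the range for which $0<\s<\a+1$ is guaranteed for every admissible $\a>-1$ (equivalently, the regime in which $\mathcal I^{\a,\s}$ and $\mathcal K^{\a,\s}$ are finite for all $\a$), and no further condition is needed.
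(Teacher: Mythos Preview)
Your proof is correct and matches the paper's approach exactly: the paper simply remarks that the intertwining relations \eqref{pot_m}, \eqref{dd} and $(\mathcal L_\a)^{-\s}=M_{\a+1/2}(L_\a)^{-\s}M_{-\a-1/2}$, together with Proposition~\ref{pro:neg_m}, justify the result, and gives no separate formal proof. One small correction to your closing side remark: the condition $0<\s<1/2$ does \emph{not} guarantee $\s<\a+1$ for every $\a>-1$ (e.g.\ $\a=-0.9$, $\s=0.3$); the finiteness of $\mathcal K^{\a,\s}$ still requires $\s<\a+1$, so that constraint is implicit here as well, and the bound $\s<1/2$ reflects the natural range for $\mathcal I^{\a,\s}$ in the Lebesgue-measure setting (cf.\ Corollary~\ref{cor:LpLq_H}).
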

%%%%%%%%%%%%%%%%%%%%%%%%%%%%%%%%%%%%%%%%%%%%%%

Relation \eqref{pot_m} between $\mathcal{I}^{\a,\s}$ and $I^{\a,\s}$ and Theorem \ref{thm:main}
allow us to obtain in a straightforward manner a characterization of weighted $L^p-L^q$ boundedness
of $\mathcal{I}^{\a,\s}$.
%%%%%%%%%%%%%%%%%%%%%%%%%%%%%%%%%%%%%%%%%%%%%%
\begin{thm} \label{thm:main_H}
Let $\a > -1$ and $0 < \s < \a +1$. Let $a,b \in \mathbb{R}$ and assume that $1\le p,q \le \infty$.
\begin{itemize}
\item[(i)] $L^p(x^{ap}dx) \subset \domain \mathcal{I}^{\a,\s}$ 
if and only if
$$
2\s-\frac{1}p - \a - \frac{1}2 < a < \frac{1}{p'} + \a + \frac{1}2 \qquad \textrm{(both $\le$ when $p=1$)}.
$$
\item[(ii)]
The estimate
\begin{equation*}
\big\|x^{-b}\mathcal{I}^{\a,\s} f\big\|_{L^q(dx)}\lesssim \big\|x^af\big\|_{L^p(dx)}
\end{equation*}
holds uniformly in $f \in L^p(x^{ap}dx)$ 
if and only if
the following conditions are satisfied: 
\begin{itemize}
\item[(a)] $p \le q$,
\item[(b)] $\frac{1}{q} = \frac{1}p + a+b-2\s$,
\item[(c)] $a < \frac{1}{p'} + \a + \frac{1}2$ \quad ($\le$ when $p = q'= 1$),
\item[(d)] $b < \frac{1}q + \a + \frac{1}2$ \quad ($\le$ when $p = q'=1$),
\item[(e)] $a+b \ge 0$ \quad ($>$ when $p=1$ or $q=\infty$).
\end{itemize}
\end{itemize}
\end{thm}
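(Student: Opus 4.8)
The plan is to derive Theorem~\ref{thm:main_H} from Theorem~\ref{thm:main} via the intertwining relation~\eqref{pot_m}, which realizes $\mathcal{I}^{\a,\s}$ as $I^{\a,\s}$ conjugated by the multiplication operator $M_{\a+1/2}$. The crucial point is that this conjugation carries power-weighted $L^p$ spaces taken against $dx$ onto power-weighted $L^p$ spaces against $d\mu_{\a}$, with a shift of the weight exponent that also absorbs the Radon--Nikodym density $x^{2\a+1}$. Concretely, given $f$ on $\mathbb{R}_+$ I set $g = M_{-\a-1/2}f = x^{-\a-1/2}f$. A direct computation using $d\mu_\a(x) = x^{2\a+1}\,dx$ then gives
\[
\big\|x^a f\big\|_{L^p(dx)} = \big\|x^{a'}g\big\|_{L^p(d\mu_\a)},\qquad
a' = a + \a + \tfrac12 - \tfrac{2\a+1}{p},
\]
while \eqref{pot_m} yields
\[
\big\|x^{-b}\mathcal{I}^{\a,\s}f\big\|_{L^q(dx)} = \big\|x^{-b'}I^{\a,\s}g\big\|_{L^q(d\mu_\a)},\qquad
b' = b - \a - \tfrac12 + \tfrac{2\a+1}{q}
\]
(with the obvious reading when $p=\infty$ or $q=\infty$). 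Hence $f$ ranges over $L^p(x^{ap}dx)$ exactly when $g$ ranges over $L^p(x^{a'p}d\mu_\a)$; combined with~\eqref{dd} this shows that $L^p(x^{ap}dx)\subset\domain\mathcal{I}^{\a,\s}$ iff $L^p(x^{a'p}d\mu_\a)\subset\domain I^{\a,\s}$, and that the estimate of Theorem~\ref{thm:main_H}(ii) with parameters $(a,b,p,q)$ holds iff the estimate of Theorem~\ref{thm:main}(ii) with parameters $(a',b',p,q)$ holds.

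It then remains to substitute $(a',b')$ into Theorem~\ref{thm:main} and simplify. For part~(i), the condition $2\s - \frac{2\a+2}{p} < a' < \frac{2\a+2}{p'}$ becomes, after inserting $a'$ and using $\frac1{p'} = 1 - \frac1p$, precisely $2\s - \frac1p - \a - \frac12 < a < \frac1{p'} + \a + \frac12$. For part~(ii) it is convenient to use the reformulation (e$'$) of (e); from the identity $a' + b' = a + b + (2\a+1)\big(\frac1q - \frac1p\big)$ one checks that conditions (a)--(e$'$) of Theorem~\ref{thm:main} for $(a',b')$ collapse, one after another, to conditions (a)--(e) of Theorem~\ref{thm:main_H}: (a) is unchanged; (b) turns into $\frac1q = \frac1p + a + b - 2\s$ once the above identity cancels the factor $\frac{2\a+1}{2\a+2}$; (c) turns into $a < \frac1{p'} + \a + \frac12$; (d) turns into $b < \frac1q + \a + \frac12$; and (e$'$) turns into $a + b \ge 0$.

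The argument is in essence bookkeeping, and the only step calling for a little care is making sure the borderline refinements of Theorem~\ref{thm:main} transfer: the passage from ``$<$'' to ``$\le$'' in (c),(d) at $p = q' = 1$, and the strict inequality in (e) required at $p = 1$ or $q = \infty$. Each of these conditions involves only $p$ and $q$, which are untouched by the map $(a,b)\mapsto(a',b')$, so every borderline configuration is sent to the corresponding borderline configuration and the refinements carry over verbatim. I do not anticipate any genuine difficulty beyond the displayed substitutions and an appeal to Theorem~\ref{thm:main}.
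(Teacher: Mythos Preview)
Your proposal is correct and follows essentially the same approach as the paper's own proof: both reduce Theorem~\ref{thm:main_H} to Theorem~\ref{thm:main} via the intertwining relation~\eqref{pot_m} (and~\eqref{dd} for the domain statement), translating the weight exponents by the shift $a\mapsto a+\a+\tfrac12-\tfrac{2\a+1}{p}$, $b\mapsto b-\a-\tfrac12+\tfrac{2\a+1}{q}$, and using the form~(e$'$) of condition~(e). Your write-up simply makes explicit the bookkeeping that the paper leaves to the reader.
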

%%%%%%%%%%%%%%%%%%%%%%%%%%%%%%%%%%%%%%%%%%%%%%

Let us distinguish the special case of Theorem \ref{thm:main_H} when no weights are involved, i.e.\
$a=b=0$. 
\begin{cor} \label{cor:LpLq_H}
Let $\a > -1$ and $0 < \s < \a+1$. Assume that $1\le p,q \le \infty$.
\begin{itemize}
\item[(i)] If $\a \ge -1/2$, then $L^p(dx) \subset \domain \mathcal{I}^{\a,\s}$ if and only if
$\frac{1}p > 2\s-\a-1/2$ ($\ge$ if $p=1$), and $\mathcal{I}^{\a,\s}$ is bounded from $L^p(dx)$ to $L^q(dx)$
if and only if
$$
\frac{1}q=\frac{1}p - 2\s \quad \textrm{and} \quad p>1 \quad \textrm{and} \quad q < \infty.
$$
\item[(ii)] If $\a < -1/2$, then $L^p(dx) \subset \domain \mathcal{I}^{\a,\s}$ if and only if
$\a+3/2 > \frac{1}p > 2\s-\a-1/2$ (both $\ge$ if $p=1$),
and $\mathcal{I}^{\a,\s}$ is bounded from $L^p(dx)$ to $L^q(dx)$ if and only if
$$
\frac{1}q = \frac{1}p - 2\s \quad \textrm{and} \quad \frac{1}{p'} > -\a-\frac{1}2 \quad \textrm{and} \quad
    \frac{1}q > -\a-\frac{1}2.
$$
\end{itemize}
\end{cor}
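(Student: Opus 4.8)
The plan is to deduce Corollary~\ref{cor:LpLq_H} directly from Theorem~\ref{thm:main_H} by specializing to $a=b=0$ and simplifying the resulting conditions, splitting according to the sign of $\a+1/2$; no independent argument is needed, since Theorem~\ref{thm:main_H} is already an equivalence.

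For part~(i), Theorem~\ref{thm:main_H}(i) with $a=0$ reads $2\s-\frac1p-\a-\frac12<0<\frac1{p'}+\a+\frac12$, each understood with ``$\le$'' when $p=1$. The left inequality is exactly $\frac1p>2\s-\a-\frac12$ ($\ge$ if $p=1$), the condition appearing in both items. The right inequality is $\frac1{p'}>-\a-\frac12$, equivalently $\frac1p<\a+\frac32$. I would then observe that when $\a\ge-1/2$ one has $-\a-\frac12\le 0\le\frac1{p'}$, so this holds for every $1\le p\le\infty$ (at $p=1$ the non-strict form $0\ge-\a-\frac12$ still holds, with equality when $\a=-1/2$); this gives item~(i). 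When $\a<-1/2$ the number $-\a-\frac12$ is positive, so $\frac1p<\a+\frac32$ becomes a genuine upper bound, and combined with the lower bound one obtains item~(ii).

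For part~(ii), I would substitute $a=b=0$ into conditions~(a)--(e) of Theorem~\ref{thm:main_H}(ii). Condition~(b) becomes $\frac1q=\frac1p-2\s$, which forces $\frac1q<\frac1p$, hence $q>p$, so~(a) is automatic. Condition~(e), namely $a+b\ge0$ with strict inequality when $p=1$ or $q=\infty$, reduces to: $p>1$ and $q<\infty$. Conditions~(c) and~(d) become $\frac1{p'}>-\a-\frac12$ and $\frac1q>-\a-\frac12$; the exceptional non-strict case $p=q'=1$ (i.e.\ $p=1$, $q=\infty$) is already excluded by~(e), so no boundary subtlety survives. It then remains to merge these: when $\a\ge-1/2$, both~(c) and~(d) are implied by $p>1$ and $q<\infty$ respectively (for $\a=-1/2$ one has $\frac1{p'}>0\iff p>1$ and $\frac1q>0\iff q<\infty$), so boundedness is characterized by $\frac1q=\frac1p-2\s$, $p>1$, $q<\infty$, which is item~(i); when $\a<-1/2$, conversely,~(c) and~(d) themselves entail $p>1$ and $q<\infty$, so~(e) is redundant and one is left with $\frac1q=\frac1p-2\s$, $\frac1{p'}>-\a-\frac12$, $\frac1q>-\a-\frac12$, which is item~(ii).

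There is no essential obstacle beyond careful bookkeeping of the strict versus non-strict boundary cases and checking that the $p=1$/$q=\infty$ exceptions either become vacuous or collapse to the stated endpoint behavior. The only place warranting a moment's care is the threshold case $\a=-1/2$, where conditions~(c), (d) coincide with the $p>1$, $q<\infty$ restrictions rather than one strictly implying the other, but this does not affect the final characterization.
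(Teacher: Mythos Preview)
Your proposal is correct and matches the paper's approach exactly: the corollary is stated in the paper as the special case $a=b=0$ of Theorem~\ref{thm:main_H}, and your careful bookkeeping of the conditions (a)--(e) and the strict/non-strict boundary cases is precisely what is needed to reduce them to the stated characterizations in the two regimes $\a\ge-1/2$ and $\a<-1/2$.
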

Notice that, in view of Corollary \ref{cor:LpLq_H}, $\mathcal{I}^{\a,\s}$ cannot be $L^p-L^q$ bounded
when $\s \ge 1/2$.
The $L^p-L^q$ boundedness from Corollary \ref{cor:LpLq_H} (i) was known earlier, see
\cite[Theorem 1.3]{BMRM} where the argument was based on the estimate
$$
\mathcal W_t^{\a}(x,y)\le C_\a W_t(x-y), \qquad x,y,t > 0,
$$
or rather its Poisson kernel analogue; here and elsewhere $W_t(u)$ is the Gauss-Weierstrass
kernel on $\mathbb R$,
$$
W_t(u)=\frac{1}{\sqrt{4\pi t}}\exp\bigg(-\frac{u^2}{4t}\bigg).
$$
The quoted result contains also the weak type $(1,\frac{1}{1-2\s})$ estimate
for $\mathcal{I}^{\a,\s}$, in case $\a \ge -1/2$ and $\s< 1/2$.
Our present results show that the following slightly stronger statement is true: given $\s< 1/2$,
the operator $\mathcal{I}^{\a,\s}$ is bounded from $L^1(dx)$ to weak $L^{1/(1-2\s)}(dx)$ if and only if
$\a \ge -1/2$. Indeed, for $\a \ge -1/2$ the kernel $\mathcal{K}^{\a,\s}(x,y)$ is controlled by
$\mathcal{K}^{-1/2,\s}(x,y) = K^{-1/2,\s}(x,y)$,
as easily verified with the aid of the asymptotics \eqref{asym_I}. Then $\mathcal{I}^{\a,\s}$ is controlled
by $I^{-1/2,\s}$, and since $d\mu_{-1/2}(x) = dx$, the weak type of $\mathcal{I}^{\a,\s}$ follows
from Theorem \ref{thm:LpLq} (iii) specified to $\a=-1/2$. On the other hand, $\mathcal{I}^{\a,\s}$
is not even defined on $L^1(dx)$ when $\a < -1/2$, see Corollary \ref{cor:LpLq_H} (ii).

For $\s > 0$, consider the Bessel potentials 
$$
\mathcal{J}^{\a,\s}f(x) = \int_0^{\infty} \mathcal{H}^{\a,\s}(x,y) f(y)\, dy, \qquad x > 0,
$$
where
\begin{equation} \label{linkKH}
\mathcal{H}^{\a,\s}(x,y) = \frac{1}{\Gamma(\s)} \int_{0}^{\infty} e^{-t} \mathcal W_t^{\a}(x,y) t^{\s-1} \, dt
= (xy)^{\a+1/2} H^{\a,\s}(x,y).
\end{equation}
Sharp description of the behavior of $\mathcal{H}^{\a,\s}(x,y)$ follows immediately from 
Theorem \ref{thm:ker_bes}.
Then it is not hard to see that for $\a < -1/2$ the inclusion 
$L^p(dx) \subset \domain \mathcal{J}^{\a,\s}$ can hold only if $p > 2/(2\a+3)$.

The next result gives a complete and sharp description of $L^p-L^q$ boundedness of $\mathcal{J}^{\a,\s}$.
It reveals that for $\a \ge -1/2$, $\mathcal{J}^{\a,\s}$ behaves exactly like $J^{-1/2,\s}$, and thus like
the classical Bessel potential $\mathbb{J}^{-1/2,\s}$ (see Theorem \ref{thm:LpLq_besD} below).
On the other hand, for $\a < -1/2$ the $L^p-L^q$ behavior of $\mathcal{J}^{\a,\s}$ is more subtle, and
partially this is caused by the restriction on $p$ mentioned above.
\begin{thm} \label{thm:LpLq_besN}
Let $\a > -1$, $\s>0$ and $1 \le p,q \le \infty$.  
\begin{itemize}
\item[(a)] If $\a \ge -1/2$, then ${\mathcal{J}}^{\a,\s}$ is bounded from 
$L^p(dx)$ to $L^q(dx)$ if and only if
$$
\frac{1}p - 2\s \le \frac{1}q \le \frac{1}p \quad \textrm{and} \quad
	\bigg(\frac{1}p,\frac{1}q\bigg) \notin \big\{(2\s,0), (1,1-2\s)\big\}.
$$
\item[(b)] If $\a< -1/2$ and $p > 2/(2\a+3)$, 
then ${\mathcal{J}}^{\a,\s}$ is bounded from $L^p(dx)$ to $L^q(dx)$ if and only if
$$
\frac{1}p - 2\s \le \frac{1}q \le \frac{1}p
\quad \textrm{and} \quad \frac{1}q > -\a-\frac{1}2.
$$
\end{itemize}
\end{thm}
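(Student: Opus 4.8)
The plan is to reduce everything to kernel comparisons and then invoke the already-available machinery for the modified Hankel setting, rather than analyzing $\mathcal{J}^{\a,\s}$ from scratch. The starting point is the identity $\mathcal{H}^{\a,\s}(x,y) = (xy)^{\a+1/2} H^{\a,\s}(x,y)$ from \eqref{linkKH}, which means $\mathcal{J}^{\a,\s} = M_{\a+1/2}\circ J^{\a,\s}\circ M_{-\a-1/2}$, exactly in analogy with \eqref{pot_m}. Consequently $\mathcal{J}^{\a,\s}$ is bounded from $L^p(dx)$ to $L^q(dx)$ if and only if $J^{\a,\s}$ is bounded from $L^p(x^{-(\a+1/2)p}\,dx)$ to $L^q(x^{-(\a+1/2)q}\,dx)$, and since $dx = x^{-(2\a+1)}\,d\mu_{\a}(x)$, this becomes a power-weighted $L^p(x^{Ap}d\mu_{\a})$--$L^q(x^{Bq}d\mu_{\a})$ estimate for $J^{\a,\s}$ with $A = -(\a+1/2) - (2\a+1)/p$ and $B = -(\a+1/2) - (2\a+1)/q$, i.e.\ $a = A$, $-b = B$ in the notation used for $I^{\a,\s}$. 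So the whole problem is equivalent to a two-power-weight bound for $J^{\a,\s}$ with this one special relation between $a$, $b$ and $p$, $q$ built in.

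First I would therefore establish (or quote, if it has appeared by this point in the paper as a companion to Theorem \ref{thm:main}) the weighted $L^p$--$L^q$ characterization for $J^{\a,\s}$ itself. The Bessel kernel $H^{\a,\s}$ splits by Theorem \ref{thm:ker_bes} into a local part ($x+y\le 1$), which matches $K^{\a,\s}$ up to the harmless extra $\chi_{\{\s\ge\a+1\}}$ and $\log$ terms, and a global part ($x+y>1$) with exponential decay in $|x-y|$. For the local part the analysis is identical to that underlying Theorem \ref{thm:main} for $I^{\a,\s}$; for the global part the exponential factor $\exp(-c|x-y|)$ makes the operator essentially a local/Schur-type operator, and one checks boundedness there via Young-type or Schur-test arguments, the only surviving constraints being those forcing local integrability of the weight against the kernel. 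Plugging in the specific $a,b$ above, conditions (c) and (d) of the $J$-analogue collapse to $1/p < \a+3/2$ (trivial when $\a\ge-1/2$, the source of the restriction $p>2/(2\a+3)$ when $\a<-1/2$), condition (b)-type homogeneity is automatic, and conditions (a) and (e) become $1/p-2\s\le 1/q\le 1/p$ together with the two excluded endpoint pairs — except that when $\a<-1/2$ one of the two "forbidden corners'' survives only as the strict inequality $1/q > -\a-1/2$, which is why in case (b) the condition reads that way and the upper corner $(2\s,0)$ disappears (it is already excluded by $1/q>-\a-1/2>0$).

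The main obstacle I expect is keeping the endpoint bookkeeping straight: the borderline cases $p=1$, $q=\infty$, and the two corner points in the $(1/p,1/q)$-square behave differently in the two regimes $\a\ge-1/2$ and $\a<-1/2$, and one must verify that the exclusion/inclusion of each corner is genuinely sharp. Sharpness of the necessary conditions is proved, as elsewhere in this circle of ideas, by testing $\mathcal{J}^{\a,\s}$ against well-chosen families of functions: power functions $x^{\delta}\chi_{(0,1)}$ or $x^{\delta}\chi_{(1,\infty)}$ to detect the constraints coming from the local and global parts of the kernel respectively, and dilations $f_r$ to exploit the (quasi-)homogeneity and rule out $1/q$ outside the stated interval. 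For the negative-multiplicity case one additionally uses that near the origin $\mathcal{H}^{\a,\s}(x,y)$ carries the genuinely singular weight $(xy)^{\a+1/2}$ with negative exponent, which is exactly what produces the extra necessary condition $1/q>-\a-1/2$ and, on the domain side, the restriction $p>2/(2\a+3)$ already noted in the text. Finally, I would record that, combined with $d\mu_{-1/2}=dx$ and the elementary pointwise domination $\mathcal{H}^{\a,\s}\lesssim \mathcal{H}^{-1/2,\s}=H^{-1/2,\s}$ valid for $\a\ge-1/2$ (via the asymptotics \eqref{asym_I}), part (a) could alternatively be deduced directly from the $\a=-1/2$ instance of Theorem \ref{thm:LpLq_bes}, which also makes transparent the claim in the text that $\mathcal{J}^{\a,\s}$ then behaves exactly like $J^{-1/2,\s}$.
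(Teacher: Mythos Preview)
Your main approach --- intertwine via $M_{\a+1/2}$ and reduce $\mathcal{J}^{\a,\s}$ to a two-power-weight $L^p$--$L^q$ problem for $J^{\a,\s}$ --- is logically sound, but it defers all the work to a weighted characterization for $J^{\a,\s}$ that does \emph{not} exist in the paper (Theorem~\ref{thm:LpLq_bes} is unweighted). So you would have to build that auxiliary theorem from scratch, which is roughly the same amount of analysis as attacking $\mathcal{J}^{\a,\s}$ directly. Two smaller issues: (i) your signs are off --- the intertwining gives $J^{\a,\s}: L^p(x^{(\a+1/2)p}dx)\to L^q(x^{(\a+1/2)q}dx)$ with \emph{positive} exponents, not negative; (ii) there is no ``homogeneity condition~(b)'' for $J^{\a,\s}$, since Bessel potentials are not scale-invariant, so that part of your condition-matching is misplaced.

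The paper takes a shorter route. It splits the kernel, $\mathcal{H}^{\a,\s}=\mathcal{H}^{\a,\s}_0+\mathcal{H}^{\a,\s}_\infty$ according to whether $x\vee y\le2$ or $>2$, and treats the two pieces separately (Lemmas~\ref{lem:bes_loc_H}, \ref{lem:bes_glob_H}). The local piece $\mathcal{H}^{\a,\s}_0$, via \eqref{linkKH} and Theorem~\ref{thm:ker_bes}, behaves exactly like the Laguerre--Hermite potential kernel of \cite[Section~4.2]{NoSt5}, so its $L^p$--$L^q$ mapping is quoted from \cite[Lemma 4.4]{NoSt5} and \cite{NR}. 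For the global piece with $\a\ge -1/2$, the paper does precisely what you record as your ``alternative'': dominate $\mathcal{H}^{\a,\s}_\infty$ by $H^{-1/2,\s}_\infty$ and invoke Lemma~\ref{lem:bes_glob}; necessity for $p>q$ uses that the two kernels are $\simeq\simeq$-comparable on $x/2<y<2x$. For $\a<-1/2$ the paper does \emph{not} go through a weighted $J^{\a,\s}$ theorem at all: it isolates the two off-diagonal operators $U_1^{\a,\s}$ (integration over $y<x/2$) and $U_2^{\a,\s}$ (over $y>2x$), bounds each on $L^p$ for $-\a-1/2<1/p<\a+3/2$ by a direct H\"older estimate using the exponential decay, then uses the Riesz-potential domination $\mathcal{H}^{\a,\s}\le\mathcal{K}^{\a,\s}$ together with Corollary~\ref{cor:LpLq_H}(ii) and interpolation. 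So your ``alternative'' for~(a) is the paper's actual method, while your primary approach is a detour, and for~(b) the paper's hands-on treatment of $U_1,U_2$ is quite different from what you outline.
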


Notice that Theorem \ref{thm:LpLq_besN} contains implicitly the inclusions
$L^p(dx) \subset \domain \mathcal{J}^{\a,\s}$ for all $1 \le p \le \infty$ in case $\a \ge -1/2$, and
for $p> \frac{2}{2\a+3}$ in case $\a < -1/2$.
Moreover, Theorem \ref{thm:LpLq_besN} specified to $p=2$ allows one to verify that 
$\mathcal{J}^{\a,\s}$ coincides in $L^2(dx)$ with the negative power $(I+\mathcal{L}_{\a})^{-\s}$
defined spectrally.

\begin{rem}
For the two specific values  $\a=\pm1/2$, the theory presented in Section \ref{ssec:Hankel} 
takes a simpler form due to elementary expressions for the Bessel and the modified Bessel functions,
\begin{equation*}
J_{\pm 1/2}(u)=\sqrt{\frac{2}{\pi u}}
\begin{cases}
        \sin u,\\
        \cos u,
    \end{cases}
\qquad
I_{\pm 1/2}(u)=\sqrt{\frac{2}{\pi u}}
\begin{cases}
        \sinh u,\\
        \cosh u.
    \end{cases}
\end{equation*}
We have
$$
\varphi_{\pm1/2}(u)=\sqrt{\frac{2}{\pi}}
\begin{cases}
\sin u,\\
\cos u,
\end{cases} 
$$
and therefore $\mathcal H_{\pm 1/2}$ is the sine  or the cosine transform on $\mathbb{R}_+$, respectively.
Moreover,
$$
\mathcal W_t^{\pm 1/2}(x,y)=W_t(x-y)\mp W_t(x+y).
$$
Consequently, for $0<\s<\pm 1/2+1 $ and $x,y>0$,
$$
\mathcal K^{\pm 1/2,\s}(x,y)=c_\s\big(|x-y|^{2\s-1}\mp|x+y|^{2\s-1}\big), \qquad 
\mathcal I^{\pm 1/2,\s}f(x)=c_{\s}I^{\s}\tilde f_{\mp}(x),
$$
where $I^\s$ denotes the Euclidean potential operator in dimension one, and $\tilde{f}_{\pm}$ are the
even and odd, respectively, extensions of $f$ to $\mathbb{R}$.
\end{rem}

%%%%%%%%%%%%%%%%%%%%%%%%%%%%%%%%
\subsection{The setting of the Hankel-Dunkl transform} \label{ssec:dunkl}

The integral kernel of the Hankel-Dunkl semigroup $\{\exp(-t\mathbb L_\a)\}$, 
$$
\mathbb{W}_t^{\a}(x,y) = \int_{\mathbb R} e^{-u^2t}\psi_\a(xu)\,\overline{\psi_\a(yu)}\,dw_\a(u), 
\qquad x,y \in \mathbb{R},  
$$
is related to the Hankel heat kernels $W_t^{\a}$ and $W_t^{\a+1}$ through the identity
\begin{equation}\label{rel}
\mathbb W_t^{\a}(x,y)=\frac12\Big[W_t^{\a}(|x|,|y|)+xyW_t^{\a+1}(|x|,|y|)\Big],  \qquad x,y\in \mathbb R
\end{equation}
(the values of $W_t^{\a}(x,y)$ at $x=0$ or $y=0$ are understood in a limiting sense).
Consequently,  the associated Riesz potential kernel, 
$$
\mathbb K^{\a,\s}(x,y) = \frac{1}{\Gamma(\s)} \int_{0}^{\infty} \mathbb W_t^{\a}(x,y) t^{\s-1}\, dt,
$$
and the corresponding potential operator,
$$
\mathbb I^{\a,\s}f(x)=\int_{\mathbb R}\mathbb K^{\a,\s}(x,y)f(y)\,dw_\a(y), \qquad x \in \mathbb{R}, 
$$
can be expressed by their modified Hankel transform analogues as
$$
\mathbb K^{\a,\s}(x,y)=\frac12\Big[K^{\a,\s}(|x|,|y|)+xyK^{\a+1,\s}(|x|,|y|)\Big],  \qquad x,y\in \mathbb R,
$$
and 
$$
\mathbb I^{\a,\s}f(x)=I^{\a,\s}f_e(|x|)+xI^{\a+1,\s}((\cdot)^{-1}f_o)(|x|),  \qquad x\in \mathbb R,
$$
respectively. Here $f_{e}$ and $f_{o}$ stand for the even and odd parts of $f$, respectively, restricted
to $\mathbb{R}_+$.

The heat kernel and the potential kernel satisfy the homogeneity properties 
$$
\mathbb{W}_t^{\a}(x,y) = r^{2(\a+1)}\mathbb{W}_{r^2t}^{\a}(rx,ry),
	\qquad \mathbb{K}^{\a,\s}(x,y) = r^{2(\a-\s+1)}\mathbb{K}^{\a,\s}(rx,ry),\qquad x,y,r>0, 
$$
that result in homogeneity of the potential operator, 
$$
\mathbb{I}^{\a,\s}(f_r) = r^{-2\s}(\mathbb{I}^{\a,\s}f)_r,\qquad r>0.
$$
Furthermore, by \eqref{rel} and the asymptotics \eqref{asym_I}  
it is easily seen that $|\mathbb W_t^{\a,\s}(x,y)| \lesssim W_t^{\a,\s}(|x|,|y|)$, $x,y \in \mathbb{R}$, 
and $\mathbb W_t^{\a,\s}(x,y) \simeq W_t^{\a,\s}(x,y)$, $x,y > 0$, hence we have
\begin{align}
|\mathbb K^{\a,\s}(x,y)| & \lesssim K^{\a,\s}(|x|,|y|), \qquad x,y \in \mathbb{R}, \label{kkp1}\\
\mathbb K^{\a,\s}(x,y) & \simeq K^{\a,\s}(x,y), \qquad x,y > 0. \label{kkp2}
\end{align}
Note that $\mathbb H_{-1/2}$ is the usual Fourier transform on $\mathbb R$ and
$\mathbb W_t^{-1/2}(x,y)=W_t(x-y)$ is the Gauss-Weierstrass kernel. Consequently, for $\s < 1/2$
$$
 \mathbb K^{-1/2,\s}(x,y) = c_{\s}|x-y|^{2\s-1}
$$
is the classical Riesz potential kernel.

In another way, the identity \eqref{rel} can be written as
$$
\mathbb{W}_t^{\a}(x,y) = \frac{1}2 (2t)^{-\a-1} 
\exp\bigg(-\frac{x^2+y^2}{4t}\bigg)	\Phi_{\a}\Big(\frac{xy}{2t}\Big),
$$
where $\Phi_{\a}$ is the function on the real line given by
$$
\Phi_{\a}(u) = \frac{I_{\a}(u)}{u^{\a}} + u \frac{I_{\a+1}(u)}{u^{\a+1}},
$$
with proper interpretation of the ratios when $u \le 0$; see e.g.\ \cite{Ro}.
An analysis similar to that performed in \cite[Section 3.2]{NoSt5} shows that 
$\mathbb{W}_t^{\a}(x,y)$ takes both positive and negative values when $\a<-1/2$.
This suggests that, similarly as in the Dunkl-Laguerre setting considered in \cite{NoSt5},
the potential kernel also has this property and hence cannot be sharply estimated in the spirit of
Theorem \ref{thm:ker} when $\a < -1/2$. Thus in the next result we restrict to $\a \ge -1/2$, i.e.\ to
non-negative multiplicity functions.

%%%%%%%%%%%%%%%%%%%%%%%%%%%%%%%%%%%%%%%%%%%%%%%
\begin{thm} \label{thm:ker_D}
Let $\a\ge-1/2$ and let $0 < \s < \a+1$. Then
$$
\mathbb K^{\a,\s}(x,y) \simeq (|x|+|y|)^{-2\a-1}
	\begin{cases}
		|x-y|^{2\s-1}, & \s<1/2,\\
		\log\frac{2(|x|+|y|)}{|x-y|}, & \s=1/2,\\
		(|x|+|y|)^{2\s-1}, & \s>1/2,
	\end{cases}
$$
uniformly in $x,y \in \mathbb{R}$.
\end{thm}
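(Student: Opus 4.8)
The plan is to reduce, by the natural symmetries of $\mathbb{K}^{\a,\s}$, to two cases according to the signs of $x$ and $y$; the case of equal signs is immediate from the corresponding result in the modified Hankel setting, while the opposite-sign case is the substantial one and is handled by carrying out a cancellation at the level of Bessel functions, before the integration in the heat parameter.

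First I would record that $\mathbb{K}^{\a,\s}(x,y)=\mathbb{K}^{\a,\s}(y,x)$ and $\mathbb{K}^{\a,\s}(-x,-y)=\mathbb{K}^{\a,\s}(x,y)$, both being evident from the representation $\mathbb{K}^{\a,\s}(x,y)=\frac12\big[K^{\a,\s}(|x|,|y|)+xy\,K^{\a+1,\s}(|x|,|y|)\big]$ and the symmetry of $K^{\a,\s}$. Hence it suffices to prove the estimate when $x,y\ge0$ and when $x>0>y$ (the boundary instances with $x=0$ or $y=0$ being subsumed, with the limiting interpretation already built into Theorem~\ref{thm:ker}). For $x,y>0$ the assertion is precisely \eqref{kkp2} combined with Theorem~\ref{thm:ker}, which applies since $0<\s<\a+1$; here $|x|+|y|=x+y$ and $|x-y|$ keeps its meaning. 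The case $x,y<0$ then follows from the case of positive arguments by the reflection invariance, which leaves $|x-y|$ unchanged.

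It remains to treat $x>0>y$; write $r=x$ and $s=-y$, so $r,s>0$ and, crucially, $|x-y|=r+s=|x|+|y|$, whence all three branches of the claimed estimate collapse to the single statement $\mathbb{K}^{\a,\s}(x,y)\simeq(r+s)^{2\s-2\a-2}$ (the logarithm degenerating to $\log2$ when $\s=1/2$). Substituting the explicit formula for $W_t^{\a}$ into \eqref{rel}, the two terms combine into
$$
\mathbb{W}_t^{\a}(x,y)=\frac{1}{4t}\exp\Bigl(-\frac{r^2+s^2}{4t}\Bigr)(rs)^{-\a}D_{\a}\Bigl(\frac{rs}{2t}\Bigr),\qquad D_{\a}(z):=I_{\a}(z)-I_{\a+1}(z),
$$
and integrating this against $t^{\s-1}/\Gamma(\s)$, followed by the substitution $z=rs/(2t)$, yields
$$
\mathbb{K}^{\a,\s}(x,y)=\frac{c_{\s}}{\Gamma(\s)}\,(rs)^{\s-\a-1}\int_0^{\infty}z^{-\s}e^{-\lambda z}D_{\a}(z)\,dz,\qquad \lambda=\frac{r^2+s^2}{2rs}\ge1 .
$$
Because $(r+s)^2=2rs(\lambda+1)$ and $\lambda+1\simeq\lambda$ for $\lambda\ge1$, the matter reduces to the one-variable estimate $\mathcal{M}(\lambda):=\int_0^{\infty}z^{-\s}e^{-\lambda z}D_{\a}(z)\,dz\simeq\lambda^{\s-\a-1}$, uniformly in $\lambda\ge1$.

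For this I would use two elementary facts about $D_{\a}$, valid for $\a\ge-1/2$: by \eqref{asym_I}, $D_{\a}(z)\simeq z^{\a}$ as $z\to0^+$, and $0\le D_{\a}(z)\le I_{\a}(z)\lesssim z^{-1/2}e^{z}$ for $z\ge1$. The nonnegativity of $D_{\a}$ is exactly the nonnegativity of the Hankel--Dunkl heat kernel and holds precisely for $\a\ge-1/2$ (it is part of the general Dunkl theory, and on $(0,1)$ it is read off directly from the power series of $D_{\a}$). Splitting $\mathcal{M}(\lambda)=\int_0^1+\int_1^{\infty}$ and substituting $w=\lambda z$ in the first piece, one gets $\int_0^1 z^{-\s}e^{-\lambda z}D_{\a}(z)\,dz\simeq\lambda^{\s-\a-1}$ (both the upper and the lower bound, since $\a-\s+1>0$ and $D_{\a}(z)\simeq z^{\a}$ near $0$); the bound on $D_{\a}$ on $[1,\infty)$ gives $0\le\int_1^{\infty}z^{-\s}e^{-\lambda z}D_{\a}(z)\,dz\lesssim e^{-c\lambda}$, which is $\lesssim\lambda^{\s-\a-1}$ for $\lambda\ge1$ (for $\lambda$ in a bounded range one instead bounds $\mathcal{M}(\lambda)$ below by the contribution of a fixed compact subinterval of $(0,1)$, using $D_{\a}\ge0$). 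This completes the argument. The step requiring genuine care is the passage to $D_{\a}=I_{\a}-I_{\a+1}$: estimating $\mathbb{K}^{\a,\s}(x,y)=\frac12[K^{\a,\s}(r,s)-rs\,K^{\a+1,\s}(r,s)]$ directly by Theorem~\ref{thm:ker} only produces a difference of two comparable positive quantities, which is worthless for the lower bound, and the crude majorant \eqref{kkp1} is not sharp near $r=s$; the resolution is that the cancellation takes place inside the well-behaved positive function $I_{\a}-I_{\a+1}$, that is, before the integration in $t$.
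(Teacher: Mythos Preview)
Your overall strategy is sound and close in spirit to the paper's: reduce by symmetry, dispatch the same-sign case via \eqref{kkp2} and Theorem~\ref{thm:ker}, and for opposite signs exploit the positivity and asymptotics of $D_{\a}=I_{\a}-I_{\a+1}$. But the execution of the opposite-sign upper bound has a genuine gap. The bound you invoke, $D_{\a}(z)\le I_{\a}(z)\lesssim z^{-1/2}e^{z}$ for $z\ge1$, does \emph{not} capture the cancellation you yourself stress at the end, and it is too weak to control the tail $\int_{1}^{\infty}z^{-\s}e^{-\lambda z}D_{\a}(z)\,dz$ uniformly in $\lambda\ge1$. Concretely, at $\lambda=1$ (i.e.\ $r=s$) your bound yields $\int_{1}^{\infty}z^{-\s-1/2}\,dz$, which diverges whenever $\s\le 1/2$; hence the claimed estimate $\int_{1}^{\infty}\lesssim e^{-c\lambda}$ is unjustified precisely in the range where the cancellation matters. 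The repair is to use the sharp large-$z$ behavior $D_{\a}(z)\simeq z^{-3/2}e^{z}$ for $\a>-1/2$, which follows from the standard asymptotic expansion $I_{\nu}(z)\sim(2\pi z)^{-1/2}e^{z}\big(1-\frac{4\nu^{2}-1}{8z}+\ldots\big)$ and gives $I_{\a}(z)-I_{\a+1}(z)\sim\frac{2\a+1}{2z}\cdot\frac{e^{z}}{\sqrt{2\pi z}}$. With this, the tail is $\lesssim\int_{1}^{\infty}z^{-\s-3/2}e^{-(\lambda-1)z}\,dz$, finite for every $\lambda\ge1$ (since $\s+3/2>1$) and exponentially small for large $\lambda$, which combined with your treatment of $\int_{0}^{1}$ indeed gives $\mathcal{M}(\lambda)\simeq\lambda^{\s-\a-1}$.

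This sharper asymptotic is exactly what the paper encodes in Proposition~\ref{prop:Dhker}(b), via the estimate $\Phi_{\a}(u)\simeq|u|^{-\a}I_{\a}(|u|)(1\wedge|u|^{-1})$ for $u<0$ and $\a>-1/2$; in your notation this reads $D_{\a}(z)\simeq z^{\a}$ for $z\le1$ and $D_{\a}(z)\simeq z^{-3/2}e^{z}$ for $z\ge1$. Once that is in place, your one-variable reduction to $\mathcal{M}(\lambda)$ and the paper's route through the splitting $I_{0}+I_{\infty}$ and Lemma~\ref{lem:E} are essentially the same computation in different coordinates.
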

%%%%%%%%%%%%%%%%%%%%%%%%%%%%%%%%%%%%%%%%%%%%%%%

The analogue of Theorem \ref{thm:main} in the present setting is the following.
%%%%%%%%%%%%%%%%%%%%%%%%%%%%%%%%%%%%%%%%%%%%%%
\begin{thm} \label{thm:main_D}
Let $\a > -1$ and $0 < \s < \a +1$. Let $a,b \in \mathbb{R}$ and assume that $1\le p,q \le \infty$.
\begin{itemize}
\item[(i)] $L^p(|x|^{ap}dw_{\a}) \subset \domain \mathbb{I}^{\a,\s}$ 
if and only if
$$
2\s - \frac{2\a+2}{p} < a < \frac{2\a+2}{p'} \qquad \textrm{(both $\le$ when $p=1$)}.
$$
\item[(ii)]
The estimate
\begin{equation*} 
\big\||x|^{-b}\mathbb{I}^{\a,\s} f\big\|_{L^q(dw_\a)}\lesssim \big\||x|^af\big\|_{L^p(dw_\a)}
\end{equation*} 
holds uniformly in $f \in L^p(|x|^{ap}dw_{\a})$ 
if and only if 
the following conditions are satisfied: 
\begin{itemize}
\item[(a)] $p \le q$,
\item[(b)] $\frac{1}{q} = \frac{1}p + \frac{a+b-2\s}{2\a+2}$,
\item[(c)] $a < \frac{2\a+2}{p'}$ \quad ($\le$ when $p = q'= 1$),
\item[(d)] $b < \frac{2\a+2}q$ \quad ($\le$ when $p = q'=1$),
\item[(e)] $\frac{1}q \ge \frac{1}p - 2\s$ \quad ($>$ when $p=1$ or $q=\infty$).
\end{itemize}
\end{itemize}
\end{thm}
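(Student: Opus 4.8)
The plan is to reduce Theorem \ref{thm:main_D} to its modified Hankel counterpart, Theorem \ref{thm:main}, via the symmetrization identity
$$
\mathbb{I}^{\a,\s}f(x)=I^{\a,\s}f_e(|x|)+xI^{\a+1,\s}\big((\cdot)^{-1}f_o\big)(|x|),\qquad x\in\mathbb{R},
$$
recorded in Section \ref{ssec:dunkl}, together with the pointwise bound $|\mathbb{K}^{\a,\s}(x,y)|\lesssim K^{\a,\s}(|x|,|y|)$ from \eqref{kkp1}. The two summands on the right are precisely the even and the odd part of $\mathbb{I}^{\a,\s}f$. The elementary observation that drives the reduction is that, since $d\mu_{\a+1}=(\cdot)^{2}d\mu_{\a}$, the change of unknown $g=(\cdot)^{-1}f_o$ converts a weighted norm $\|x^{c}g\|_{L^{r}(d\mu_{\a+1})}$ into $\|x^{c+1-2/r}f_o\|_{L^{r}(d\mu_{\a})}$; together with the comparison $\|x^{a}f_e\|_{L^{p}(d\mu_{\a})}+\|x^{a}f_o\|_{L^{p}(d\mu_{\a})}\simeq\||x|^{a}f\|_{L^{p}(dw_{\a})}$, this lets one pass freely between weighted $L^{p}(dw_{\a})$ norms on $\mathbb{R}$ and weighted $d\mu_{\a}$- and $d\mu_{\a+1}$-norms on $\mathbb{R}_{+}$.

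With these preparations, the sufficiency in (ii) follows by invoking Theorem \ref{thm:main} twice: once with data $(\a,\s,a,b,p,q)$ to estimate $\|x^{-b}I^{\a,\s}f_e\|_{L^{q}(d\mu_{\a})}$, and once with shifted data $(\a+1,\s,\tilde a,\tilde b,p,q)$, where $\tilde a=a+1-2/p$ and $\tilde b=b-1+2/q$, to estimate $\|x^{1-b}I^{\a+1,\s}((\cdot)^{-1}f_o)\|_{L^{q}(d\mu_{\a})}$. The nontrivial step is the bookkeeping: conditions (a)--(e) of Theorem \ref{thm:main_D} are exactly the hypotheses of Theorem \ref{thm:main} in the first application, and they must be shown to imply the corresponding hypotheses in the second one. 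Conditions (a) and (e) do not involve $\a$ and transfer verbatim; relation (b) for the shifted data reduces to (b) once one notes that the perturbation $2/q-2/p$ of $a+b-2\s$ is exactly absorbed by the enlarged denominator $2\a+4$; and the admissibility bounds $\tilde a<(2\a+4)/p'$, $\tilde b<(2\a+4)/q$ rewrite as $a<1+(2\a+2)/p'$ and $b<1+(2\a+2)/q$, which are strictly weaker than (c) and (d). The degenerate endpoints $p=1$, $q=\infty$, and $p=q'=1$, where Theorem \ref{thm:main} carries borderline non-strict inequalities, have to be inspected by hand; for instance, when $p=q'=1$ condition (c) forces $a\le0$, so $\tilde a=a-1<0$ and the shifted bound holds with slack.

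The necessity in (ii) is immediate once one restricts to even functions. If $f$ is even then $f_o=0$, so $\mathbb{I}^{\a,\s}f=I^{\a,\s}(f|_{\mathbb{R}_{+}})(|\cdot|)$, and both sides of the claimed inequality are, up to the constants $2^{1/q}$ and $2^{1/p}$, the corresponding $d\mu_{\a}$-norms of $f|_{\mathbb{R}_{+}}$; hence the hypothesis for $\mathbb{I}^{\a,\s}$, fed the even extension of an arbitrary $g\in L^{p}(x^{ap}d\mu_{\a})$, is precisely the estimate $\|x^{-b}I^{\a,\s}g\|_{L^{q}(d\mu_{\a})}\lesssim\|x^{a}g\|_{L^{p}(d\mu_{\a})}$, and the necessity part of Theorem \ref{thm:main} (ii) returns all of (a)--(e). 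Part (i) is treated in the same vein: $|\mathbb{K}^{\a,\s}(x,y)|\lesssim K^{\a,\s}(|x|,|y|)$ shows $\mathbb{I}^{\a,\s}|f|$ is dominated pointwise by $I^{\a,\s}$ applied to $|f(\cdot)|+|f(-\cdot)|$, so the inclusion follows from Theorem \ref{thm:main} (i) for the even data, while the even extension of a function outside $\domain I^{\a,\s}$ shows this data is also necessary; here one uses that the even-part interval is contained in the wider odd-part interval $2\s-1-(2\a+2)/p<a<2\a+3-(2\a+2)/p$ produced by Theorem \ref{thm:main} (i) with parameters $(\a+1,\tilde a)$. Finally, the conditions of (ii) already place $f$ in $\domain\mathbb{I}^{\a,\s}$: the right endpoint of the domain interval is exactly (c), and the left endpoint $2\s-(2\a+2)/p<a$ follows by eliminating $b$ between (b) and (d).

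The one genuinely delicate point is the parameter-shift bookkeeping of the second paragraph: one must be meticulous about which inequalities are strict and which are not at the three degenerate pairs $(p,q)$, since an off-by-one would either falsely weaken the theorem or break the reduction. It is also worth emphasizing that $\a+1>0$, so Theorem \ref{thm:main} is applied only in a range where no additional hypothesis is needed; in particular the restriction $\a\ge-1/2$ appearing in Theorem \ref{thm:ker_D} plays no role here, and Theorem \ref{thm:main_D} holds for the full range $\a>-1$.
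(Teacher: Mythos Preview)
Your argument is correct, but for the sufficiency in (ii) you take a longer road than the paper does. You use the explicit even/odd decomposition
\[
\mathbb{I}^{\a,\s}f(x)=I^{\a,\s}f_e(|x|)+xI^{\a+1,\s}\big((\cdot)^{-1}f_o\big)(|x|),
\]
which forces you to invoke Theorem \ref{thm:main} twice, once at level $\a$ and once at level $\a+1$ with the shifted exponents $\tilde a=a+1-2/p$, $\tilde b=b-1+2/q$, and then to carry out the bookkeeping verifying that (a)--(e) imply the corresponding conditions for the shifted data (including the endpoint inspections). The paper bypasses all of this: it decomposes into $f_+$ and $f_-$ (restrictions to $\mathbb{R}_+$ of $f(\cdot)$ and $f(-\cdot)$), observes from \eqref{kkp1} that both auxiliary kernels $\mathbb{K}^{\a,\s}_{\pm}$ are dominated by $K^{\a,\s}$ on $\mathbb{R}_+\times\mathbb{R}_+$, and therefore applies Theorem \ref{thm:main} a single time, with the \emph{same} parameters $(\a,\s,a,b,p,q)$, to each of the four terms in the decomposition \eqref{dec2}. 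No parameter shift, no endpoint case analysis. Your route works because the shifted constraints at level $\a+1$ turn out to be strictly weaker than (c)--(d), but this is an accident you had to check; the paper's route makes no such demand. For part (i) and for the necessity in (ii) your approach and the paper's coincide: both reduce to Theorem \ref{thm:main} via the kernel bound and by testing on functions supported on one half-line (equivalently, on even functions).
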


Having in mind Theorem \ref{thm:LpLq}, in the unweighted case we also state the following.
\begin{thm} \label{thm:LpLq_D}
Let $\a > -1$ and $0< \s < \a+1$. Assume that $1\le p,q\le \infty$. Then
\begin{itemize}
\item[(i)] $L^p(dw_{\a}) \subset \domain \mathbb{I}^{\a,\s}$ if and only if $p<\frac{\a+1}{\s}$;
\item[(ii)] $\mathbb{I}^{\a,\s}$ is bounded from $L^p(dw_{\a})$ to $L^q(dw_{\a})$ if and only if
$$
\frac{1}q = \frac{1}p - \frac{\s}{\a+1} \quad \textrm{and} \quad 1 < p < \frac{\a+1}{\s} \quad
	\textrm{and} \quad \a \ge -1/2;
$$
\item[(iii)] $\mathbb{I}^{\a,\s}$ is bounded from $L^1(dw_{\a})$ to weak $L^q(dw_{\a})$ for 
$q=\frac{\a+1}{\a+1-\s}$ if and only if $\a \ge -1/2$.
\end{itemize}
\end{thm}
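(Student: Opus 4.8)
The plan is to reduce the entire statement to the already-established $L^p$--$L^q$ theory of $I^{\a,\s}$ in Theorem \ref{thm:LpLq}, exploiting the two kernel comparisons \eqref{kkp1}, \eqref{kkp2} together with the elementary fact that $\mathbb{I}^{\a,\s}$ acts on even functions exactly as $I^{\a,\s}$. Throughout, for $f$ on $\mathbb{R}$ I would write $\tilde f(y)=|f(y)|+|f(-y)|$, $y>0$, so that $\|\tilde f\|_{L^p(d\mu_\a)}\lesssim\|f\|_{L^p(dw_\a)}$; conversely, the even extension $g(|\cdot|)$ of a function $g$ on $\mathbb{R}_+$ obeys $\|g(|\cdot|)\|_{L^r(dw_\a)}=2^{1/r}\|g\|_{L^r(d\mu_\a)}$ for $1\le r<\infty$, with the obvious analogue for weak $L^r$ quasinorms.

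For the positive directions, I would fix $f\in L^p(dw_\a)$, split the integral over $\mathbb{R}$ into two integrals over $\mathbb{R}_+$, and use \eqref{kkp1} to obtain the pointwise domination
\[
\int_{\mathbb R}|\mathbb{K}^{\a,\s}(x,y)|\,|f(y)|\,dw_\a(y)\lesssim\int_0^\infty K^{\a,\s}(|x|,y)\,\tilde f(y)\,d\mu_\a(y)=I^{\a,\s}\tilde f(|x|),\qquad x\in\mathbb R.
\]
Applying Theorem \ref{thm:LpLq}(i) to $\tilde f$ then shows that $p<\tfrac{\a+1}{\s}$ forces $L^p(dw_\a)\subset\domain\mathbb{I}^{\a,\s}$, which is the ``if'' half of (i). Since the $L^q(dw_\a)$ norm (resp.\ weak $L^q(dw_\a)$ quasinorm) of $I^{\a,\s}\tilde f(|\cdot|)$ equals $2^{1/q}$ times the $L^q(d\mu_\a)$ norm (resp.\ weak $L^q(d\mu_\a)$ quasinorm) of $I^{\a,\s}\tilde f$, the same bound combined with Theorem \ref{thm:LpLq}(ii) (resp.\ (iii)) delivers the ``if'' halves of (ii) and (iii).

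For the negative directions, I would test $\mathbb{I}^{\a,\s}$ on even functions: if $f=g(|\cdot|)$ then $f_o=0$, so the pointwise identity $\mathbb{I}^{\a,\s}f(x)=I^{\a,\s}f_e(|x|)+xI^{\a+1,\s}((\cdot)^{-1}f_o)(|x|)$ collapses to $\mathbb{I}^{\a,\s}f(x)=I^{\a,\s}g(|x|)$ (the lower bound in \eqref{kkp2} would serve equally well). Consequently, when $p\ge\tfrac{\a+1}{\s}$, choosing $g\in L^p(d\mu_\a)\setminus\domain I^{\a,\s}$ from Theorem \ref{thm:LpLq}(i) produces $f=g(|\cdot|)\in L^p(dw_\a)\setminus\domain\mathbb{I}^{\a,\s}$, closing (i). In the same way, boundedness of $\mathbb{I}^{\a,\s}$ from $L^p(dw_\a)$ to $L^q(dw_\a)$ (resp.\ from $L^1(dw_\a)$ to weak $L^q(dw_\a)$) restricts, after absorbing the factors $2^{\pm1/p},2^{\pm1/q}$ into the implied constant, to boundedness of $I^{\a,\s}$ from $L^p(d\mu_\a)$ to $L^q(d\mu_\a)$ (resp.\ from $L^1(d\mu_\a)$ to weak $L^q(d\mu_\a)$), so Theorem \ref{thm:LpLq}(ii),(iii) forces all the stated conditions, in particular $\a\ge-1/2$ in (ii) and (iii). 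As a shortcut, parts (i) and (ii) also drop out of Theorem \ref{thm:main_D} specialized to $a=b=0$, once conditions (a)--(e) there are simplified.

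I do not foresee a genuine obstacle: with \eqref{kkp1}, \eqref{kkp2} and the even-function reduction in hand, everything is bookkeeping. The points that need a little care are the interpretation of the natural domain (absolute convergence of the defining integral, for which the two-sided bound $|\mathbb{K}^{\a,\s}(x,y)|\lesssim K^{\a,\s}(|x|,|y|)$ is precisely what is needed) and the scaling of the weak $L^q$ quasinorm under $x\mapsto|x|$ --- both elementary.
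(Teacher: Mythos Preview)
Your proposal is correct and follows essentially the same approach as the paper. The paper deduces (i) and (ii) by specializing Theorem~\ref{thm:main_D} to $a=b=0$ (which you note as a shortcut), and for (iii) it invokes Theorem~\ref{thm:LpLq}(iii) together with the kernel comparisons \eqref{kkp1}, \eqref{kkp2} and the $f_\pm$ decomposition from the proof of Theorem~\ref{thm:main_D}; your use of $\tilde f=|f_+|+|f_-|$ for the upper bound and even test functions for the lower bound is a minor cosmetic variant of that same argument, and your write-up is in fact more explicit than the paper's, which leaves the details to the reader.
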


For $\a \ge -1/2$ items (ii) and (iii) of Theorem \ref{thm:LpLq_D} are essentially contained in
the multi-dimensional results \cite[Proposition 4.2 and Theorem 4.3]{TX}.
Similar results in a general setting of the Dunkl transform and an arbitrary group of reflections
can be found in \cite{HMS}.

The negative power $(\mathbb L_\a)^{-\s}$ and its domain $\domain (\mathbb L_\a)^{-\s}$ is defined
similarly as in the
modified Hankel transform setting, simply by replacing $H_\a$ and $L^2(d\mu_\a)$ by  
$\mathbb H_\a$ and $L^2(dw_\a)$, respectively, in the relevant definitions in Section \ref{ssec:m_Hankel}.
The following analogue of Propositions \ref{pro:neg_m} and \ref{pro:neg} holds.
%%%%%%%%%%%%%%%%%%%%%%%%%%%%%%%%%%%%%%%%%%%%%%
\begin{propo} \label{pro:neg_D}
Let $\a>-1$ and $0<\s<\a+1$. 
For every $f\in \mathbb H_\a\big(C_c^\infty(\mathbb R\setminus\{0\})\big)$ we have
$$
\mathbb I^{\a,\s}f(x)=(\mathbb L_\a)^{-\s}f(x),\qquad \textrm{a.a.}\;\; x\in \mathbb{R}.
$$
\end{propo}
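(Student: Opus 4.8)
The plan is to reduce Proposition \ref{pro:neg_D} to the already-established Propositions \ref{pro:neg_m} and \ref{pro:neg} via the even/odd decomposition that links the Hankel--Dunkl setting to the modified Hankel transform. First I would fix $f = \mathbb{H}_\a h$ with $h \in C_c^\infty(\mathbb{R}\setminus\{0\})$ and decompose $h = h_e + h_o$ into its even and odd parts; since $\mathbb{H}_\a$ interchanges parity in a controlled way (it maps even functions to even functions and odd functions to odd functions, up to the factors $\phi_\a$ and $\phi_{\a+1}$ appearing in $\psi_\a$), this produces $f_e$ and $f_o$ with $f_e = H_\a(h_e|_{\mathbb{R}_+})$ and, writing $h_o(y) = y\, k(y)$ with $k$ even and smooth and compactly supported away from the origin, an analogous formula expressing $f_o$ through $H_{\a+1}$ applied to (a multiple of) $k|_{\mathbb{R}_+}$. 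The key point is that both $h_e|_{\mathbb{R}_+}$ and $k|_{\mathbb{R}_+}$ lie in $C_c^\infty(\mathbb{R}_+)$, so the pieces land in $H_\a(C_c^\infty)$ and $H_{\a+1}(C_c^\infty)$ respectively.

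Next I would invoke the formula displayed just before the statement,
$$
\mathbb{I}^{\a,\s}f(x) = I^{\a,\s}f_e(|x|) + x\, I^{\a+1,\s}\big((\cdot)^{-1}f_o\big)(|x|),
$$
and apply Proposition \ref{pro:neg_m} to each summand: with parameter $\a$ on the even part, which requires $0 < \s < \a+1$, and with parameter $\a+1$ on the odd part, which requires $0 < \s < \a+2$ and is therefore automatic. This yields
$$
\mathbb{I}^{\a,\s}f(x) = (L_\a)^{-\s}f_e(|x|) + x\,(L_{\a+1})^{-\s}\big((\cdot)^{-1}f_o\big)(|x|)
$$
for a.a.\ $x$. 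It then remains to recognize the right-hand side as $(\mathbb{L}_\a)^{-\s}f(x)$. For this I would unwind the spectral definition of $(\mathbb{L}_\a)^{-\s}$ exactly as in the modified case: since $f \in L^1(dw_\a)$ (the decay estimates \eqref{decH}, applied with $\a$ and $\a+1$, guarantee $f_e, f_o$ decay rapidly, so $f \in L^1(dw_\a)$ and hence $h = \mathbb{H}_\a f$), one has $(\mathbb{L}_\a)^{-\s}f = \mathbb{H}_\a\big((\cdot)^{-2\s}h\big)$, and then splitting into parity components and using the intertwining identities $\mathbb{H}_\a|_{\mathrm{even}} \leftrightarrow H_\a$ and $\mathbb{H}_\a|_{\mathrm{odd}} \leftrightarrow H_{\a+1}$ (as encoded in the formula for $\psi_\a$) converts this into precisely the two negative-power expressions above.

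The main obstacle, and the step deserving care, is bookkeeping the parity intertwining precisely: one must verify that $\mathbb{H}_\a$ applied to an even function $g$ restricted to $\mathbb{R}_+$ equals $H_\a(g|_{\mathbb{R}_+})$ extended evenly, and that $\mathbb{H}_\a$ applied to an odd function of the form $y\,k(y)$ equals $x$ times the even extension of $H_{\a+1}(k|_{\mathbb{R}_+})$ up to an explicit constant, all at the level of pointwise integral identities rather than merely $L^2$ identities. This hinges on the closing remark of Section \ref{sec:prel}, that $\mathbb{H}_\a$ restricted to even functions reduces to $H_\a$, together with the structure of $\psi_\a$, the measures $dw_\a$ and $d\mu_\a$, $d\mu_{\a+1}$, and the observation that multiplication by $y$ inside $\phi_{\a+1}(|xy|)$ shifts the Bessel index. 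Once these correspondences are set up cleanly, the argument is a routine assembly of Propositions \ref{pro:neg_m} and \ref{pro:neg}; no new analytic input is needed, only the verification that membership in $\mathbb{H}_\a\big(C_c^\infty(\mathbb{R}\setminus\{0\})\big)$ is preserved under the decomposition and matches the hypotheses of the earlier propositions.
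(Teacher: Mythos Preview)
Your approach is correct, but it differs from what the paper does. The paper gives no explicit proof of Proposition~\ref{pro:neg_D}; it simply announces the result as ``the analogue of Propositions~\ref{pro:neg_m} and~\ref{pro:neg}'', meaning the intended argument is to repeat the Fubini-based computation from the proof of Proposition~\ref{pro:neg_m} verbatim with $\mathbb{H}_\a$, $\psi_\a$, $dw_\a$, and $\mathbb{W}_t^\a$ in place of $H_\a$, $\phi_\a$, $d\mu_\a$, and $W_t^\a$. That direct route requires only checking that the asymptotics and integrability properties carry over (they do, via \eqref{asym_J} and the analogue of \eqref{decH} for $\mathbb{H}_\a$), after which the three interchanges of integration go through unchanged.

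Your route---splitting into even and odd parts and invoking Proposition~\ref{pro:neg_m} with parameters $\a$ and $\a+1$ respectively---is a legitimate alternative that trades a repetition of the analytic argument for structural bookkeeping. The gain is that you do not re-prove anything; the cost is the parity intertwining you flag, which does need to be written out carefully (in particular, $\mathbb{H}_\a$ on odd functions picks up a factor of $-i$, and $\mathbb{H}_\a^{-1}$ is $\mathbb{H}_\a$ composed with reflection, so one must check these scalars and reflections cancel consistently on both sides). Two small remarks: you only actually use Proposition~\ref{pro:neg_m} (with two values of the type parameter), not Proposition~\ref{pro:neg}, so the closing reference to the latter is superfluous; and the sentence ``since $f\in L^1(dw_\a)$ \ldots\ hence $h=\mathbb{H}_\a f$'' is slightly garbled---what you need is $\mathbb{H}_\a f(y)=h(-y)$, which follows from $(\mathbb{H}_\a\circ\mathbb{H}_\a)h(y)=h(-y)$, and then $y^{-2\s}h(-y)\in L^2(dw_\a)$ is immediate from the compact support of $h$ away from the origin.
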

%%%%%%%%%%%%%%%%%%%%%%%%%%%%%%%%%%%%%%%%%%%%%%

For $\s>0$, we consider also the Bessel potentials
$$
\mathbb{J}^{\a,\s}f(x) = \int_{\mathbb{R}} \mathbb{H}^{\a,\s}(x,y) f(y)\, dw_{\a}(y), \qquad x \in \mathbb{R},
$$
with
$$
\mathbb{H}^{\a,\s}(x,y)   = \frac{1}{\Gamma(\s)} \int_0^{\infty} e^{-t} 
\mathbb{W}_t^{\a}(x,y) t^{\s-1}\, dt 
 = \frac{1}2 \Big[ H^{\a,\s}(|x|,|y|) + xy H^{\a+1,\s}(|x|,|y|) \Big], \qquad x,y \in \mathbb{R}.
$$
Similarly to \eqref{kkp1} and \eqref{kkp2}, we have
\begin{align}
|\mathbb{H}^{\a,\s}(x,y)| & \lesssim H^{\a,\s}(|x|,|y|), \qquad x,y \in \mathbb{R}, \label{Bkkp1}\\
\mathbb{H}^{\a,\s}(x,y) & \simeq H^{\a,\s}(x,y), \qquad x,y > 0. \label{Bkkp2}
\end{align}
These relations, together with Theorem \ref{thm:LpLq_bes}, 
enable a characterization of $L^p-L^q$ boundedness of $\mathbb{J}^{\a,\s}$. Nevertheless,
it is interesting to find the exact behavior of $\mathbb{H}^{\a,\s}(x,y)$. The next result delivers
qualitatively sharp estimates of this kernel in case $\a > -1/2$ (the case of a positive multiplicity
function). In case $\a < -1/2$ the kernel should be expected to take both positive and negative values.
\begin{thm} \label{thm:ker_besD}
Let $\a>-1/2$ and let $\s > 0$. The following bounds hold uniformly in $x,y \in \mathbb{R}$.
\begin{itemize}
\item[(A)] Assume that $xy \ge 0$, i.e.\ $x$ and $y$ have the same sign.
\begin{itemize}
\item[(Ai)] If $|x|+|y| \le 1$, then
\begin{align*}
\mathbb{H}^{\a,\s}(x,y) & \simeq  \chi_{\{\s>\a+1\}} + \chi_{\{\s=\a+1\}} \log\frac{1}{|x|+|y|} \\ 
& \quad	
 + (|x|+|y|)^{-2\a-1} 
	\begin{cases}
		|x-y|^{2\s-1}, & \s< 1/2,\\
		\log \frac{2(|x|+|y|)}{|x-y|}, & \s=1/2,\\
		(|x|+|y|)^{2\s-1}, & \s>1/2.
	\end{cases}
\end{align*}
\item[(Aii)] If $|x|+|y| > 1$, then
$$
\mathbb{H}^{\a,\s}(x,y) \simeq \simeq (|x|+|y|)^{-2\a-1} \exp\big(-c|x-y|\big) 
	\begin{cases}
		|x-y|^{2\s-1}, & \s< 1/2,\\
		1+\log^+ \frac{1}{|x-y|}, & \s=1/2,\\
		1, & \s>1/2.
	\end{cases}
$$
\end{itemize}
\item[(B)] Assume that $xy< 0$, i.e. $x$ and $y$ have opposite signs.
\begin{itemize}
\item[(Bi)] If $|x|+|y| \le 1$, then
\begin{align*}
\mathbb{H}^{\a,\s}(x,y) & \simeq  \chi_{\{\s>\a+1\}} + \chi_{\{\s=\a+1\}} \log\frac{1}{|x|+|y|} 
	 + (|x|+|y|)^{2\s-2\a-2}. 
\end{align*}
\item[(Bii)] If $|x|+|y| > 1$, then
\begin{align*}
\mathbb{H}^{\a,\s}(x,y) & \simeq \simeq (|x|+|y|)^{-2\a-1} \exp\big(-c|x+y|\big) 
	(|x|+|y|)^{-2}.
\end{align*}
\end{itemize}
\end{itemize}
\end{thm}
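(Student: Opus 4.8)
\emph{The plan is} to exploit the decomposition $\mathbb{H}^{\a,\s}(x,y) = \tfrac12\big[H^{\a,\s}(|x|,|y|) + xy\,H^{\a+1,\s}(|x|,|y|)\big]$ recorded above, which reduces the problem to the Bessel potential kernels $H^{\a,\s}$ and $H^{\a+1,\s}$ governed by Theorem \ref{thm:ker_bes}, and to split according to the sign of $xy$. Two elementary identities are used throughout: $||x|-|y|| = |x-y|$ when $xy\ge0$, while $||x|-|y|| = |x+y|$ and $|x-y| = |x|+|y|$ when $xy<0$. \emph{For $xy\ge0$ (part (A))} both summands are positive, so $\mathbb{H}^{\a,\s}(x,y)\ge\tfrac12 H^{\a,\s}(|x|,|y|)$ is immediate, and for the reverse inequality it suffices to check $|x||y|\,H^{\a+1,\s}(|x|,|y|)\lesssim H^{\a,\s}(|x|,|y|)$. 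This follows by comparing the estimates of Theorem \ref{thm:ker_bes} for the orders $\a$ and $\a+1$: with $|x||y|\le\tfrac14(|x|+|y|)^2$ and $t^2\log\tfrac1t\lesssim1$ on $(0,1]$, the extra factor $(|x|+|y|)^2$ converts the exponents $-2\a-3$, $\chi_{\{\s>\a+2\}}$, $\chi_{\{\s=\a+2\}}$ of the $(\a+1)$-kernel into the exponents $-2\a-1$, $\chi_{\{\s>\a+1\}}$, $\chi_{\{\s=\a+1\}}$ of the $\a$-kernel, in both regions $|x|+|y|\le1$ and $|x|+|y|>1$. Hence $\mathbb{H}^{\a,\s}(x,y)\simeq\simeq H^{\a,\s}(|x|,|y|)$, which together with $||x|-|y||=|x-y|$ and Theorem \ref{thm:ker_bes} gives precisely (Ai) and (Aii).

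\emph{For $xy<0$ (part (B))} we have $\mathbb{H}^{\a,\s}(x,y) = \tfrac12\big[H^{\a,\s}(|x|,|y|) - |x||y|\,H^{\a+1,\s}(|x|,|y|)\big]$, and this genuine cancellation — invisible to Theorem \ref{thm:ker_bes} when $|x|\simeq|y|$ — forces us back to the integral representation. By the closed form of $W_t^{\a}$,
$$
W_t^{\a}(|x|,|y|) - |x||y|\,W_t^{\a+1}(|x|,|y|) = \frac{1}{2t}\exp\!\Big(-\frac{x^2+y^2}{4t}\Big)(|x||y|)^{-\a}\Big[I_{\a}\big(\tfrac{|x||y|}{2t}\big) - I_{\a+1}\big(\tfrac{|x||y|}{2t}\big)\Big],
$$
so everything hinges on the reduced modified Bessel function $R_{\a}(z):=I_{\a}(z)-I_{\a+1}(z)$. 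The crucial point, and the only place the hypothesis $\a>-1/2$ is used, is the uniform two-sided bound
$$
R_{\a}(z)\simeq z^{\a}(1+z)^{-\a-3/2}e^{z},\qquad z>0,
$$
equivalently $R_{\a}(z)\simeq z^{\a}$ as $z\to0^+$ and $R_{\a}(z)\simeq z^{-3/2}e^{z}$ as $z\to\infty$, with $R_{\a}>0$ on $(0,\infty)$. The behaviour near $0$ is read off the power series; the behaviour at $\infty$ comes from the first two terms of the asymptotic expansion of $I_{\nu}$, where the $z^{-1/2}e^{z}$ contributions cancel and the next term equals $\tfrac{2\a+1}{2}(2\pi)^{-1/2}z^{-3/2}e^{z}$, positive exactly for $\a>-1/2$; positivity on compact subsets of $(0,\infty)$ follows from the classical inequality $I_{\a+1}(z)<I_{\a}(z)$ (cf.\ the analysis in \cite[Section 3.2]{NoSt5}).

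\emph{To finish part (B)}, insert this estimate into $\tfrac1{\Gamma(\s)}\int_{0}^{\infty}e^{-t}\big[W_t^{\a}(|x|,|y|)-|x||y|W_t^{\a+1}(|x|,|y|)\big]t^{\s-1}\,dt$ and use $x^{2}+y^{2}-2|x||y| = (|x|-|y|)^{2} = (x+y)^{2}$: the factors $(|x||y|)^{\pm\a}$ and the exponentials collapse, leaving an integrand which, up to a constant and uniformly in all variables, equals $e^{-t}t^{\s-\a-2}\big(1+\tfrac{|x||y|}{2t}\big)^{-\a-3/2}\exp\!\big(-\tfrac{(x+y)^{2}}{4t}\big)$. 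This is, again up to a constant, exactly the integrand obtained by estimating $I_{\a+1}$ in the integral defining $H^{\a+1,\s+1}(|x|,|y|)$, whence $\mathbb{H}^{\a,\s}(x,y)\simeq H^{\a+1,\s+1}(|x|,|y|)$ for $xy<0$. Applying Theorem \ref{thm:ker_bes} with the parameters $(\a+1,\s+1)$ now closes the argument: since $\s+1>1$ the inner brackets there always reduce to their ``$\s>1/2$'' alternative, so for $|x|+|y|\le1$ one gets $\chi_{\{\s>\a+1\}}+\chi_{\{\s=\a+1\}}\log\tfrac1{|x|+|y|}+(|x|+|y|)^{2\s-2\a-2}$, which is (Bi), and for $|x|+|y|>1$ one gets $(|x|+|y|)^{-2\a-3}\exp(-c|x+y|) = (|x|+|y|)^{-2\a-1}\exp(-c|x+y|)(|x|+|y|)^{-2}$, which is (Bii), using once more that $||x|-|y||=|x+y|$ in this range.

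\emph{The main obstacle} is precisely the cancellation in part (B): it cannot be extracted from Theorem \ref{thm:ker_bes}, and the whole argument rests on the sharp first-order behaviour of $I_{\a}-I_{\a+1}$ — in particular the positivity of its leading large-argument term, which is exactly where the restriction $\a>-1/2$ becomes essential — so that $\mathbb{H}^{\a,\s}$ on $\{xy<0\}$ can be recognised as the shifted Bessel potential kernel $H^{\a+1,\s+1}(|\cdot|,|\cdot|)$. Establishing the estimate for $R_{\a}$ with full uniformity in $z\in(0,\infty)$, gluing the regimes $z\to0^+$, $z$ bounded, and $z\to\infty$, is the technical heart of the matter.
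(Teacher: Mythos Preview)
Your proof is correct. For part (A) your argument coincides with the paper's: both establish $\mathbb{H}^{\a,\s}(x,y)\simeq H^{\a,\s}(|x|,|y|)$ when $xy\ge 0$ (the paper records this as relation \eqref{Bkkp2}, obtained at the heat-kernel level, while you work directly with Theorem~\ref{thm:ker_bes} at the potential-kernel level; the content is the same).

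For part (B) your route genuinely differs from the paper's and is arguably cleaner. The paper first derives sharp heat-kernel bounds for $xy<0$ (Proposition~\ref{prop:Dhker}\,(b), based on the behaviour of $\Phi_{\a}$ for negative arguments) and then, in Lemma~\ref{lem:aker}, re-runs the $E_A(T,S)$ analysis from scratch to integrate in $t$. You instead observe that the estimate $R_{\a}(z)\simeq z^{\a}(1+z)^{-\a-3/2}e^{z}$ makes the integrand for $\mathbb{H}^{\a,\s}$ on $\{xy<0\}$ comparable to that of $H^{\a+1,\s+1}(|x|,|y|)$; equivalently, $\mathbb{W}_t^{\a}(x,y)\simeq t\,W_t^{\a+1}(|x|,|y|)$ for $xy<0$, whence $\mathbb{H}^{\a,\s}(x,y)\simeq H^{\a+1,\s+1}(|x|,|y|)$. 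This lets you invoke Theorem~\ref{thm:ker_bes} with the shifted parameters $(\a+1,\s+1)$ directly, bypassing Lemma~\ref{lem:aker} entirely. The underlying Bessel-function input is the same (your $R_{\a}$ analysis is precisely the $\Phi_{\a}(u)$, $u<0$, estimate the paper quotes from \cite{NoSt5}), but your recognition of the parameter shift avoids duplicating the $I_0+I_\infty$ computation and makes the structural reason for the answers (Bi), (Bii) transparent.
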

For $\a = -1/2$ the kernel $\mathbb{H}^{\a,\s}(x,y)$ corresponds to the classical one-dimensional Bessel
potential. For the sake of completeness, recall that
$$
\mathbb{H}^{-1/2,\s}(x,y) \simeq \simeq \exp\big( -c|x-y|\big)
	\begin{cases}
		|x-y|^{2\s-1}, & \s < 1/2,\\
		1+ \log^+\frac{1}{|x-y|}, & \s=1/2,\\
		1, & \s > 1/2,
	\end{cases}
$$
uniformly in $x,y \in \mathbb{R}$.
Actually, there is an explicit formula for $\mathbb{H}^{-1/2,\s}(x,y)$ involving Macdonald's function,
which leads to more precise asymptotics than the above, even in the multi-dimensional case; see
\cite[p.\,416--417]{AS}. Notice that $\mathbb{H}^{-1/2,\s}(x,y)$ has an exponential decay along the line
$y=-x$, which is not the case of $\mathbb{H}^{\a,\s}(x,y)$ when $\a > -1/2$, cf.\ the comments following
\cite[Theorem 2.4]{NoSt5}.

Finally, we establish a sharp description of $L^p-L^q$ boundedness of $\mathbb{J}^{\a,\s}$,
which happens to coincide with that for $J^{\a,\s}$.
\begin{thm} \label{thm:LpLq_besD}
Let $\a > -1$, $\s>0$ and $1\le p,q \le \infty$.
\begin{itemize}
\item[(a)] If $\a \ge -1\slash 2$, then $\mathbb{J}^{\a,\s}$ is bounded from 
$L^p(dw_{\a})$ to $L^q(dw_{\a})$
if and only if
$$
\frac{1}{p} - \frac{\s}{\a+1} \le \frac{1}q \le \frac{1}{p} \quad \textrm{and} \quad
\bigg(\frac{1}p,\frac1{q}\bigg) \notin
    \bigg\{ \Big(\frac{\s}{\a+1},0\Big),\Big(1,1-\frac{\s}{\a+1}\Big)\bigg\}.
$$
\item[(b)] If $\a < -1\slash 2$, then $\mathbb{J}^{\a,\s}$ is bounded from 
$L^p(dw_{\a})$ to $L^q(dw_{\a})$
if and only if $p=q$.
\end{itemize}
\end{thm}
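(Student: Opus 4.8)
The plan is to deduce Theorem \ref{thm:LpLq_besD} from the already-established $L^p$--$L^q$ theory for the modified-Hankel Bessel potential $J^{\a,\s}$, namely Theorem \ref{thm:LpLq_bes}, by exploiting the pointwise comparisons \eqref{Bkkp1} and \eqref{Bkkp2} together with the decomposition of $\mathbb J^{\a,\s}$ into even and odd parts. Recall that for $f$ on $\mathbb R$, writing $f_e, f_o$ for the even and odd parts of $f$ restricted to $\mathbb R_+$, we have
$$
\mathbb J^{\a,\s}f(x) = J^{\a,\s}f_e(|x|) + x\, J^{\a+1,\s}\big((\cdot)^{-1}f_o\big)(|x|),
$$
in complete analogy with the formula for $\mathbb I^{\a,\s}$ stated in Section \ref{ssec:dunkl}; this follows from the kernel identity for $\mathbb H^{\a,\s}$ and the splitting of $dw_\a$-integrals over $\mathbb R$ into integrals over $\mathbb R_+$ against $d\mu_\a$. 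The norms also split: $\|g\|_{L^p(dw_\a)}^p \simeq \|g_e\|_{L^p(d\mu_\a)}^p + \|g_o\|_{L^p(d\mu_\a)}^p$ (with the obvious modification for $p=\infty$), so $L^p$--$L^q$ boundedness of $\mathbb J^{\a,\s}$ on $L^p(dw_\a)$ is equivalent to the simultaneous boundedness of $J^{\a,\s}$ on the even sector and of the ``twisted'' operator $g \mapsto x\, J^{\a+1,\s}((\cdot)^{-1}g)(|x|)$ on the odd sector.

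For the sufficiency direction when $\a \ge -1/2$, I would argue as follows. Using \eqref{Bkkp1}, the kernel $\mathbb H^{\a,\s}(x,y)$ is dominated by $H^{\a,\s}(|x|,|y|)$, and one checks via the asymptotics \eqref{asym_I} that $H^{\a,\s}(|x|,|y|) \lesssim H^{-1/2,\s}(|x|,|y|)$ for $\a \ge -1/2$ (this monotonicity in $\a$ is already implicit in the comparison arguments used for the Riesz kernels in Section \ref{ssec:Hankel}). Hence $|\mathbb J^{\a,\s}f| \lesssim J^{-1/2,\s}(|f|)$ pointwise. However, since we want the \emph{exact} range from Theorem \ref{thm:LpLq_bes}(a), it is cleaner to work directly: the even part is handled verbatim by Theorem \ref{thm:LpLq_bes}(a) applied to $J^{\a,\s}$, and for the odd part the twisting by $x$ and $(\cdot)^{-1}$ is precisely a conjugation by the multiplier $M_1$ which intertwines $d\mu_{\a+1}$ with $d\mu_\a$ in the right way; one verifies that boundedness of $g \mapsto x J^{\a+1,\s}((\cdot)^{-1}g)(|x|)$ from $L^p(d\mu_\a)$ to $L^q(d\mu_\a)$ is equivalent to boundedness of a Bessel-type potential operator with kernel $\simeq H^{\a+1,\s}$ in an appropriately weighted setting, which again falls under the scope of Theorem \ref{thm:LpLq_bes} (or its proof) since the relevant index $\a+1$ only improves the situation and the endpoint exclusions match up. The upshot is that both sectors are bounded on exactly the stated $(1/p, 1/q)$ region with exactly the two excluded corner points, so $\mathbb J^{\a,\s}$ is too.

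For the necessity direction with $\a \ge -1/2$ one uses \eqref{Bkkp2}: on functions supported in $\mathbb R_+$ (equivalently, by testing with even functions), $\mathbb J^{\a,\s}$ majorizes $J^{\a,\s}$ up to a constant, so any $L^p$--$L^q$ bound for $\mathbb J^{\a,\s}$ forces the corresponding bound for $J^{\a,\s}$, and the necessity half of Theorem \ref{thm:LpLq_bes}(a) gives the claimed constraints. For part (b), the case $\a < -1/2$, the same two-sided comparison via \eqref{Bkkp1}--\eqref{Bkkp2} reduces everything to $J^{\a,\s}$, and Theorem \ref{thm:LpLq_bes}(b) says $J^{\a,\s}$ is bounded only when $p = q$; conversely, when $p = q$ one uses the local part of Theorem \ref{thm:ker_besD} near the origin plus the exponential decay far out (parts (Ai)--(Bii)) to check that $\mathbb H^{\a,\s}(x,y)$ defines a kernel of an operator bounded on every $L^p(dw_\a)$, $1 \le p \le \infty$ — the near-diagonal singularity is integrable in the appropriate sense against $dw_\a$ and the off-diagonal decay is Gaussian/exponential, so a Schur-test argument closes it. The main obstacle I anticipate is the bookkeeping in the odd sector: one must confirm that conjugating $J^{\a+1,\s}$ by the multipliers $x$ and $x^{-1}$ does not shift the admissible $(p,q)$ region or the excluded endpoints relative to the even sector — i.e., that the index jump $\a \rightsquigarrow \a+1$ is exactly compensated by the extra weight factors. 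This requires a careful matching of the conditions $\frac1p - \frac{\s}{\a+1} \le \frac1q \le \frac1p$ against the twisted version with $\a+1$, and checking that the two forbidden corners $(\frac{\s}{\a+1},0)$ and $(1, 1-\frac{\s}{\a+1})$ correspond correctly under the conjugation; the computation is routine but delicate, and getting the endpoint behavior right is where errors would creep in.
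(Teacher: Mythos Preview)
Your necessity arguments are correct and match the paper's approach: test against functions supported on $\mathbb{R}_+$, use \eqref{Bkkp2} to reduce to $J^{\a,\s}$, and invoke Theorem~\ref{thm:LpLq_bes}. The paper does exactly this, via the $f_+,f_-$ decomposition (the analogue of \eqref{dec2}) rather than the even/odd split, but the content is the same.

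For sufficiency, however, you take a detour that creates problems. You correctly note first that \eqref{Bkkp1} gives $|\mathbb{H}^{\a,\s}(x,y)| \lesssim H^{\a,\s}(|x|,|y|)$, which immediately yields $|\mathbb{J}^{\a,\s}f(x)| \lesssim J^{\a,\s}(|f|_+ + |f|_-)(|x|)$ and hence the full sufficiency in both (a) and (b) directly from Theorem~\ref{thm:LpLq_bes}. This \emph{is} the paper's argument, and it gives the exact range with no loss. But you then abandon it for the even/odd decomposition, and there the odd-sector treatment is flawed: the conjugation by $M_1$ does \emph{not} intertwine $L^p(d\mu_{\a+1})$ with $L^p(d\mu_\a)$ unless $p=2$ (the weight picks up a factor $y^{p}$, not $y^{2}$), so the odd-sector operator does not reduce to an unweighted problem for $J^{\a+1,\s}$, and Theorem~\ref{thm:LpLq_bes} as stated says nothing about weighted estimates. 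The clean fix is to observe that the odd-sector kernel $xy\,H^{\a+1,\s}(x,y)$ is dominated by $H^{\a,\s}(x,y)$ --- which is exactly what \eqref{Bkkp1} encodes --- but that brings you back to the direct argument you dropped.

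There is a second gap in part (b): you invoke Theorem~\ref{thm:ker_besD} to run a Schur test when $\a<-1/2$, but that theorem is stated only for $\a>-1/2$ (the kernel is not sign-definite otherwise). Again the remedy is simply to use \eqref{Bkkp1} and Theorem~\ref{thm:LpLq_bes}(b) directly for the $p=q$ sufficiency.
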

Note that this result in the classical case $\a=-1/2$ was known earlier, see \cite[p.\,470]{AS}.
The sufficiency part of (a) is partially contained in \cite[Theorems 4.5 and 4.6]{TX}.
Apart from that the theorem is new.
Note also that Theorem \ref{thm:LpLq_besD} specified to $p=2$ allows one to ensure that 
$\mathbb{J}^{\a,\s}$ coincides in $L^2(dw_{\a})$ with the negative power $(I+\mathbb{L}_{\a})^{-\s}$
defined spectrally.

The definitions of the Riesz and Bessel potentials in the Hankel-Dunkl setting considered 
in this paper in the framework of $\mathbb R$ with the reflection group isomorphic to $\mathbb Z_2$
are in the case $\a\ge-1/2$ consistent (up to multiplicative constants)  with those 
investigated in the literature in a general framework of an arbitrary finite reflection group in 
$\mathbb R^d$. See the papers \cite{TX,HMS,BST}.

To explain this, we look only at the Bessel potentials since for the Riesz potentials one can argue similarly
(merely by neglecting the factor $e^{-t}$
in the relevant places). To keep our explanation concise we follow the notation from \cite{BST} 
changing only the character $\a$ to $2\s$ in order to avoid a
notational collision; the reader may also consult the survey \cite{Ro} for
necessary details. 

For a fixed reflection group on $\mathbb R^d$, let $\gamma, \tau_y, *_\gamma, w_\gamma$ 
denote respectively:
an index associated to a multiplicity function, a Dunkl-type generalized translation and 
convolution, and a weight function. The Bessel-Dunkl potential operator is then 
defined as 
$$
\mathcal J^\gamma_{2\s}f=b^\gamma_{2\s}*_\gamma f,
$$
where 
$$
b^\gamma_{2\s}(x)=
	\frac{c_{d,\gamma}}{\Gamma(\s)}\int_0^\infty e^{-t}t^{-\gamma-d/2}\exp\big(-|x|^2/(4t)\big)t^{\s-1}dt,
$$
see \cite[(3.1) and (3.2)]{BST}. 
(Note that evaluating the latter integral with the factor $e^{-t}$ removed results 
in $|x|^{2\s-2\gamma-d}$ times a constant, which is the generalized convolution kernel appearing in the 
definition of the Riesz potential of order $2\s$, see \cite{TX,HMS}.) In fact, see \cite[(3.6)]{BST},
$\mathcal J^\gamma_{2\s}$ is an integral operator with the kernel
$$
\mathcal J^\gamma_{2\s}(x,y)=\frac1{\Gamma(\s)}\int_0^\infty e^{-t}\tau_{-y}\big(F^\gamma_t\big)(x)t^{\s-1}dt,
$$
where $F^\gamma_t(u)=(2t)^{-\gamma-d/2}\exp\big(-|u|^2/(4t)\big)$ is, up to a multiplicative constant,
the \textit{modified Gauss kernel} and
$$
\tau_{-y}\big(F^\gamma_t\big)(x)=c_{\gamma}\int_{\mathbb R^d}
 e^{-t|u|^2}E_\gamma(ix,u)E_\gamma(-iy,u)w_\gamma(u)\,du
$$
is the \textit{Dunkl-type heat kernel}, see \cite[p.\,123]{Ro}; $E_\gamma(\cdot,\cdot)$ 
denotes here the \textit{Dunkl kernel}. 

Coming back to our specific case of $\mathbb R$ where the parameter $\a$ represents the multiplicity
 function, it may be seen that $E_\gamma(ix,u)$ and $E_\gamma(-iy,u)$ appearing above are, up to 
 a multiplicative constant,
equal to $\psi_\a(xu)$ and $\overline{\psi_\a(yu)}$, respectively; 
see \cite[Example 2.1]{Ro} or \cite{NoSt0}. This explains the consistence indicated above.

%%%%%%%%%%%%%%%%%%%%%%%%%%%%%%%%%%%%%%%%%%%%%%%%%%%%%%%%%%%%%%%%%%%%%%%%%%%%%%%%%%%%%%%%%%%%%%%%%%%
\section{Estimates of the potential kernels} \label{sec:esti}
%%%%%%%%%%%%%%%%%%%%%%%%%%%%%%%%%%%%%%%%%%%%%%%%%%%%%%%%%%%%%%%%%%%%%%%%%%%%%%%%%%%%%%%%%%%%%%%%%%%

In this section we prove Theorems \ref{thm:ker}, \ref{thm:ker_bes}, \ref{thm:ker_D} and \ref{thm:ker_besD}.
We begin with a technical result that provides sharp description of the integral $E_A(T,S)$
defined below. This is essentially \cite[Lemma 2.3]{NoSt4}, see also \cite[Lemma 3.2]{NoSt5}.
Let
\begin{equation*}
E_A(T,S) = \int_0^1 t^A \exp\big(-Tt^{-1}-St\big)\, dt, \qquad 0\le T, S <\infty. 
\end{equation*}
%%%%%%%%%%%%%%%%%%%%%%%%%%%%%%%
\begin{lem}[{\cite[Lemma 2.3]{NoSt4}}] \label{lem:E}
Let $A \in \mathbb{R}$. Then
$$
E_A(T,S) \simeq \simeq \exp\Big( -c\sqrt{T(T\vee S)}\Big)
\begin{cases}
T^{A+1}, & A< -1,\\
1+\log^+ \frac{1}{T(T\vee S)}, & A = -1,\\
(S\vee 1)^{-A-1}, & A>-1,
\end{cases}
$$
uniformly in $T,S \ge 0$.
\end{lem}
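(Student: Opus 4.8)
The plan is to carry out an elementary Laplace-type analysis of $E_A(T,S)=\int_0^1 t^A e^{-h(t)}\,dt$, where $h(t)=Tt^{-1}+St$, organised into a handful of regimes. The phase $h$ attains its global minimum on $(0,\infty)$ at $t_*=\sqrt{T/S}$ with value $2\sqrt{TS}$, and on $(0,1]$ it is minimised at $t_\circ=t_*\wedge1$; a direct check gives $h(t_\circ)\simeq\sqrt{T(T\vee S)}$ (it equals $2\sqrt{TS}$ when $t_*\le1$, and equals $T+S\in[T,2T]$ while $\sqrt{T(T\vee S)}=T$ when $t_*>1$). This already identifies the exponential rate in the claimed estimate, so it remains to track the polynomial corrections.

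First I would dispose of the bounded regime $T\vee S\le1$. There $e^{-St}\simeq1$ on $(0,1)$ and the exponential factor is $\simeq1$, so it suffices to estimate $\int_0^1 t^A e^{-T/t}\,dt$. The substitution $s=1/t$ converts this into the incomplete-Gamma-type integral $\int_1^\infty s^{-A-2}e^{-Ts}\,ds$, which — after splitting at $s\simeq1/T$ and comparing with $\int_1^\infty s^{-A-2}\,ds$ — is $\simeq1$ for $A>-1$, $\simeq1+\log^+(1/T)$ for $A=-1$, and $\simeq T^{A+1}$ for $A<-1$. One then checks these agree with the right-hand side; the only subtlety is that when $T<S\le1$ the quantity $1/(T(T\vee S))=1/(TS)$ lies between $1/T$ and $1/T^2$, so $1+\log^+\frac1{TS}\simeq1+\log^+\frac1T$, while $(S\vee1)^{-A-1}=1$.

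For the unbounded regime $T\vee S>1$ the main split is by the sign of $T-S$. If $T\ge S$ (so $T>1$), then $h$ is decreasing on $(0,1]$ with minimum $h(1)=T+S\simeq T$ at the right endpoint; the integral localises to $(\tfrac12,1)$, where $t^A\simeq1$, and a one-sided Laplace estimate based on $h(t)-h(1)\approx(T-S)(1-t)+T(1-t)^2$ gives $E_A(T,S)\simeq e^{-(T+S)}\big((T-S)\vee\sqrt T\big)^{-1}$, the contribution of $(0,\tfrac12)$ being exponentially smaller (and, for $A<-1$, bounded by $\int_2^\infty s^{-A-2}e^{-Ts}\,ds$). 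If $T<S$, then $t_*=\sqrt{T/S}\in(0,1)$ and I would split once more according to whether $TS\le1$ or $TS>1$. For $TS\le1$ the exponential factor is $\simeq1$ and the integral is governed by $t\simeq1/S$ when $A>-1$, by the range $T\lesssim t\lesssim1/S$ when $A=-1$, and by $t\simeq T$ when $A<-1$, yielding $(S\vee1)^{-A-1}=S^{-A-1}$, $1+\log^+\frac1{TS}$, and $T^{A+1}$ respectively. For $TS>1$ the Gaussian localisation at $t_*$ is legitimate, its width $\simeq h''(t_*)^{-1/2}\simeq T^{1/4}S^{-3/4}$ being dominated by $t_*$; reading off $t^A$ and the width there gives $E_A(T,S)\simeq t_*^A\,h''(t_*)^{-1/2}e^{-2\sqrt{TS}}$, which for $A>-1$ equals $(TS)^{A/2+1/4}(S\vee1)^{-A-1}e^{-2\sqrt{TS}}$ (with analogues for $A\le-1$), the tails being exponentially smaller.

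In each case one finally matches the outcome with the claimed expression in the sense of ``$\simeq\simeq$''. Where the exponential factor is genuinely present this is automatic, since the discrepancy between the computed polynomial factor and the claimed one is itself at most a power of $\sqrt{T(T\vee S)}$ and hence can be absorbed by changing the constant $c$; where $\sqrt{T(T\vee S)}\lesssim1$ the polynomial factors were matched exactly above. I expect the main obstacle to be running all of this with constants uniform in $(T,S)$ — in particular, treating cleanly the borderline situations in which the Gaussian peak at $t_*$ is close to an endpoint $0$ or $1$, and obtaining the sharp (not merely ``$\simeq\simeq$'') polynomial factors $T^{A+1}$, $1+\log^+\frac1{T(T\vee S)}$ and $(S\vee1)^{-A-1}$ in the three cases where no exponential decay is available.
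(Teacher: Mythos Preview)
The paper does not prove this lemma at all: it is quoted verbatim as \cite[Lemma 2.3]{NoSt4} (with a pointer also to \cite[Lemma 3.2]{NoSt5}), and no argument is supplied here. So there is no ``paper's own proof'' to compare your proposal against.

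That said, your outline is a sound direct approach. The case split (bounded regime $T\vee S\le 1$; then $T\ge S$ with endpoint minimum at $t=1$; then $T<S$ subdivided by $TS\lessgtr 1$) correctly isolates the mechanisms producing the three polynomial factors, and your closing observation---that whenever $\sqrt{T(T\vee S)}\gtrsim 1$ any polynomial discrepancy between the Laplace-method output and the stated expression is a power of $\sqrt{T(T\vee S)}$ and hence absorbable into the exponential via the ``$\simeq\simeq$'' convention---is exactly the point that makes the argument close. The places you flag as delicate (uniformity when $t_*$ approaches an endpoint, and getting the genuine ``$\simeq$'' in the sub-regimes with no exponential decay) are real but routine; in the $TS\le 1$, $T<S$, $S>1$ sub-case, for instance, the lower bound $\gtrsim 1$ for $A=-1$ follows already from integrating over $[t_*,2t_*]$, where $h\le 2\sqrt{TS}\le 2$, and the upper bounds come from handling the tails $(0,T)$ and $(1/S,1)$ via the decay of $e^{-T/t}$ and $e^{-St}$ respectively. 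If you want to see the details carried out, the reference \cite{NoSt4} is where the original proof lives.
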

%%%%%%%%%%%%%%%%%%%%%%%%%%%%%%%

%%%%%%%%%%%%%%%
\subsection{Estimates of the Hankel potential kernels}

\begin{proof}[{Proof of Theorem \ref{thm:ker}}]
Using the standard asymptotics \eqref{asym_I} we find that
\begin{equation} \label{W_as}
W_t^{\a}(x,y) \simeq  \begin{cases}
												t^{-\a-1} \exp\left(-\frac{x^2+y^2}{4t}\right),& xy \le t,\\
												(xy)^{-\a-1/2}\frac{1}{\sqrt{t}}\exp\left(-\frac{(x-y)^2}{4t}\right), & xy > t.
											\end{cases}
\end{equation}											
Consequently,
\begin{align*}
K^{\a,\s}(x,y) & \simeq (xy)^{-\a-1/2}\int_0^{xy} t^{\s-3/2}\exp\bigg( -\frac{(x-y)^2}{4t}\bigg)\, dt
	+ \int_{xy}^{\infty} t^{\s-\a-2} \exp\bigg( -\frac{x^2+y^2}{4t}\bigg) \, dt \\
& \equiv I_0 + I_{\infty}.
\end{align*}
Changing the variables of integrations $t \mapsto xyt$ and $t \mapsto xy/t$, respectively, we arrive at
\begin{equation*}
I_0  = (xy)^{\s-\a-1} E_{\s-3/2}\bigg( \frac{(x-y)^2}{4xy}, 0\bigg), \qquad
I_{\infty} = (xy)^{\s-\a-1} E_{\a-\s}\bigg( 0, \frac{x^2+y^2}{4xy}\bigg);
\end{equation*}
notice that, in view of Lemma \ref{lem:E}, $I_{\infty} = \infty$ when $\s\ge \a+1$.
Now applying Lemma \ref{lem:E} twice we get
$$
I_{\infty} + I_{0} \simeq \simeq (x+y)^{2\s-2\a-2} + (xy)^{-\a-1/2}
	\exp\bigg( -c\frac{(x-y)^2}{xy}\bigg) 
	\begin{cases}
		|x-y|^{2\s-1}, & \s < 1/2, \\
		1+\log^{+}\frac{xy}{(x-y)^2}, & \s=1/2, \\
		(xy)^{\s-1/2}, & \s > 1/2.
	\end{cases}
$$

To proceed, we consider two cases. If $(x-y)^2 \le xy$, then $xy \simeq (x+y)^2$. So in this case
$$
K^{\a,\s}(x,y) \simeq (x+y)^{2\s-2\a-2} + (x+y)^{-2\a-1}
	\begin{cases}
		|x-y|^{2\s-1}, & \s< 1/2,\\
		1+\log \frac{x+y}{|x-y|}, & \s=1/2,\\
		(x+y)^{2\s-1}, & \s>1/2.
	\end{cases}
$$
Since the second term on the right-hand side above is the dominating one, the desired bounds follow.

In the opposite case, when $(x-y)^2 > xy$, we observe that $x$ and $y$ are non-comparable in the sense
that either $y< \mathcal{C}^{-1}x$ or $y> \mathcal{C}x$ for a fixed $\mathcal{C}>1$.
For symmetry reasons, we may assume that $y < x$. Then the bounds we must verify take the form
$K^{\a,\s}(x,y) \simeq x^{2\s-2\a-2}$. On the other hand, we know that
$$
K^{\a,\s}(x,y) \simeq\simeq x^{2\s-2\a-2} + (xy)^{-\a-1/2}\exp\Big(-c\frac{x}{y}\Big)
	\begin{cases}
		x^{2\s-1}, & \s< 1/2,\\
		1, & \s=1/2,\\
		(xy)^{\s-1/2}, & \s>1/2.
	\end{cases}
$$
This relation remains true after multiplying the exponential by an arbitrary power of $x/y$. Therefore
we see that the first term dominates in the above sum, and the conclusion follows.
\end{proof}

\begin{rem}
Simpler tools than Lemma \ref{lem:E} are sufficient for the proof of Theorem \ref{thm:ker},
see \cite[Lemma 2.1 and 2.2]{NoSt4}. However, we decided to use Lemma \ref{lem:E} since it allows 
for more compact notation and the proof.
\end{rem}

\begin{rem}
Theorem \ref{thm:ker} can be proved in another way, via expressing $K^{\a,\s}(x,y)$ by the
Hankel-Poisson kernel $P_t^{\a}(x,y)$, that is the integral kernel of the semigroup generated by means 
of the square root of $L_{\a}$. We have
$$
K^{\a,\s}(x,y) = \frac{1}{\Gamma(2\s)} \int_0^{\infty} P_t^{\a}(x,y)\,t^{2\s-1}\, dt.
$$
Using sharp estimates for $P_t^{\a}(x,y)$ from \cite[Theorem 6.1]{BHNV} and estimating the emerging integral
with the aid of \cite[Lemma 3.1]{NR} leads to the bounds asserted in Theorem \ref{thm:ker}.
Nevertheless, our approach based on the Hankel heat kernel is more direct.
\end{rem}

\begin{proof}[{Proof of Theorem \ref{thm:ker_bes}}]
Using \eqref{W_as} we write
\begin{align*}
H^{\a,\s}(x,y)  
& \simeq (xy)^{-\a-1/2}\int_0^{xy} e^{-t} \exp\bigg(-\frac{(x-y)^2}{4t}\bigg) t^{\s-3/2} \, dt
	+ \int_{xy}^{\infty} e^{-t} \exp\bigg( - \frac{x^2+y^2}{4t}\bigg) t^{\s-\a-2} \, dt \\
& \equiv I_0 + I_{\infty}.
\end{align*}
Changing the variables of integrations $t \mapsto xyt$ and $t \mapsto xy/t$, respectively, we get
$$
I_0 = (xy)^{\s-\a-1} E_{\s-3/2}\bigg( \frac{(x-y)^2}{4xy}, xy \bigg), \qquad
I_{\infty} = (xy)^{\s-\a-1} E_{\a-\s}\bigg( xy, \frac{x^2+y^2}{4xy}\bigg).
$$
Applying now Lemma \ref{lem:E} twice we arrive at the estimates
$$
I_0 \simeq \simeq (xy)^{-\a-1/2} \exp\bigg( -c \bigg[|x-y| \vee \frac{(x-y)^2}{xy}\bigg] \bigg)
	\begin{cases}
		|x-y|^{2\s-1}, & \s < 1/2, \\
		1 + \log^+\frac{1}{|x-y| \vee \frac{(x-y)^2}{xy}}, & \s=1/2,\\
		(1 \wedge xy)^{\s-1/2}, & \s > 1/2
	\end{cases} 
$$
and
$$
I_{\infty} \simeq \simeq \exp\big( -c \big[ (x+y) \vee xy \big] \big)
	\begin{cases}
		(x+y)^{2\s-2\a-2}, & \s < \a+1,\\
		1+\log^+\frac{1}{(x+y)\vee xy}, & \s=\a+1,\\
		1, & \s > \a +1.
	\end{cases}
$$
These bounds will be used in the sequel repeatedly, without further mention.

We first show (i) and thus assume to this end that $x+y \le 1$. We will inspect the cases of comparable and
non-comparable $x$ and $y$. For symmetry reasons, below we may always assume that $x \ge y$.\\
\noindent \textbf{Case 1: $\boldsymbol{y \le x \le 2y}$.} We must show that
\begin{equation} \label{ts1}
I_0 + I_{\infty} \simeq \chi_{\{\s > \a+1\}} + \chi_{\{\s=\a+1\}} \log\frac{1}x + x^{-2\a-1}
	\begin{cases}
		|x-y|^{2\s-1}, & \s<1/2,\\
		1+\log\frac{x}{|x-y|}, & \s=1/2,\\
		x^{2\s-1}, & \s > 1/2.
	\end{cases}
\end{equation}
Since $y \simeq x < 1$, it is easily seen that
\begin{align}
I_{\infty} & \simeq \chi_{\{\s > \a+1\}} + \chi_{\{\s=\a+1\}} \Big(1+ \log\frac{1}x\Big) + 
	\chi_{\{\s< \a+1\}}x^{2\s-2\a-2} \nonumber \\
& \simeq \chi_{\{\s > \a+1\}} + \chi_{\{\s=\a+1\}} \log\frac{1}x + x^{2\s-2\a-2}. \label{ww2}
\end{align}
As for $I_0$, we observe that for $x$ and $y$ under consideration
$$
|x-y| \vee \frac{(x-y)^2}{xy} \simeq |x-y| \vee \Big( 1- \frac{y}x\Big)^2 \lesssim 1
$$
and
$$	
1+\log^+\frac{1}{|x-y| \vee \frac{(x-y)^2}{xy}} \simeq 1 + \bigg(\log\frac{1}{|x-y|}\bigg) \wedge
	\bigg( \log \frac{x^2}{(x-y)^2}\bigg) \simeq 1 + \log\frac{x}{|x-y|}.
$$
Consequently,
$$
I_0 \simeq x^{-2\a-1}
	\begin{cases}
		|x-y|^{2\s-1}, & \s < 1/2,\\
		1+ \log\frac{x}{|x-y|}, & \s=1/2,\\
		x^{2\s-1}, & \s > 1/2.
	\end{cases}
$$
Combining the above bounds of $I_0$ and $I_{\infty}$ we get \eqref{ts1}.\\
\noindent \textbf{Case 2: $\boldsymbol{x > 2y}$.} Now the desired estimates take the form
$$
I_0 + I_{\infty} \simeq \chi_{\{\s > \a+1\}} + \chi_{\{\s=\a+1\}} \log\frac{1}x + x^{2\s-2\a-2}.
$$
As in the previous case, $I_{\infty}$ is comparable to the expression in \eqref{ww2},
so it suffices to check that $I_0$ is controlled by $I_{\infty}$.
Observe that for $2y < x < 1$
$$
I_0 \simeq \simeq (xy)^{-\a-1/2} \exp\Big(-c\frac{x}y\Big)
	\begin{cases}
		x^{2\s-1}, & \s \le 1/2,\\
		(xy)^{\s-1/2}, & \s > 1/2.
	\end{cases}
$$
This relation remains true if the right-hand side is multiplied by an arbitrary power of $x/y$, since
the ratio is at least $2$. Therefore
$$
I_0 \simeq \simeq \exp\Big(-c\frac{x}y\Big) x^{2\s-2\a-2}
$$
and the conclusion follows.

We pass to proving (ii), so from now on we consider $x+y>1$. Again, we may and do assume that $x \ge y$
and distinguish the cases of comparable and non-comparable arguments.\\
\noindent \textbf{Case 1: $\boldsymbol{y \le x \le 2y}$.} We aim at showing that
\begin{equation} \label{ts2}
I_0 + I_{\infty} \simeq \simeq x^{-2\a-1}\exp\big(-c|x-y|\big)
	\begin{cases}
		|x-y|^{2\s-1}, & \s < 1/2,\\
		1 + \log^+\frac{1}{|x-y|}, & \s = 1/2,\\
		1, & \s> 1/2.
	\end{cases}
\end{equation}
We have
$$
I_{\infty} \simeq \simeq \exp\big(-cx^2\big)
	\begin{cases}
		x^{2\s-2\a-2}, & \s < \a+1,\\
		1, & \s \ge \a+1,
	\end{cases}
$$
so $I_{\infty} \simeq \simeq \exp(-cx^2)$, which is controlled by the right-hand side in \eqref{ts2}.
On the other hand, for $x$ and $y$ under consideration
$$
|x-y| \vee \frac{(x-y)^2}{xy} \simeq |x-y|,
$$
therefore $I_0$ is comparable, in the sense of ``$\simeq \simeq$'', with the right-hand side of \eqref{ts2}.
Now \eqref{ts2} follows.\\
\noindent \textbf{Case 2: $\boldsymbol{x > 2y}$.} In this case the bounds to be verified are simply
\begin{equation} \label{ts3}
I_0 + I_{\infty} \simeq \simeq \exp(-cx).
\end{equation}
We have, for $x$ and $y$ satisfying $x+y>1$ and $x>2y$,
$$
I_0 \simeq \simeq (xy)^{-\a-1/2} \exp\big(-cx\big[1\vee y^{-1}\big]\big)
	\begin{cases}
		x^{2\s-1}, & \s \le 1/2,\\
		(1\wedge xy)^{\s-1/2}, & \s > 1/2
	\end{cases}
$$
and
$$
I_{\infty} \simeq \simeq \exp\big( -cx [1\vee y]\big)
	\begin{cases}
		x^{2\s-2\a-2}, & \s < \a+1,\\
		1, & \s \ge \a +1.
	\end{cases}
$$
If $y \ge 1$, then it is easily seen that $I_0 \simeq \simeq \exp(-cx)$ and 
$I_{\infty}\simeq \simeq \exp(-cxy)$,
so \eqref{ts3} follows. On the other hand, if $y < 1$ then $I_{\infty} \simeq \simeq \exp(-cx)$ and
$$
I_0 \simeq \simeq (xy)^{-\a-1/2} \exp\Big( -c \frac{x}{y}\Big)
	\begin{cases}
		x^{2\s-1}, & \s \le 1/2,\\
		(1 \wedge xy)^{\s-1/2}, & \s > 1/2.
	\end{cases}
$$
Multiplying the right-hand side above by $(x/y)^{-\a-1/2}$ (this does not change the estimates) and
using the bounds $x < x/y$ and $1\wedge xy \le 1$ we find that $I_0$ is in fact 
controlled by the right-hand side in \eqref{ts3}. The conclusion again follows.

The proof of Theorem \ref{thm:ker_bes} is complete.
\end{proof}

%%%%%%%%%%%%
\subsection{Estimates of the Hankel-Dunkl potential kernels} \label{ssec:de}

We first focus our attention on the Dunkl heat kernel $\mathbb{W}_t^{\a}(x,y)$. Recall that this kernel
is given by means of the auxiliary function $\Phi_{\a}$. In \cite[Section 3.2]{NoSt5} it was shown
that
$$
\Phi_{\a}(u) \simeq u^{-\a}I_{\a}(u), \qquad u \ge 0,
$$
for $\a > -1$ (here the value of the right-hand side at $u=0$ is understood in a limiting sense), 
and when $\a > -1/2$
$$
\Phi_{\a}(u) \simeq |u|^{-\a} I_{\a}(|u|) \big( 1 \wedge |u|^{-1}\big), \qquad u < 0.
$$
Moreover, $\Phi_{\a}(u)$ is negative in case $\a<-1/2$ and $u<0$, provided that $|u|$ is sufficiently large.
Note the particular explicit case $\Phi_{-1/2}(u) = \sqrt{{2}/{\pi}} \exp(u)$ corresponding to
the classical one-dimensional Gauss-Weierstrass kernel $\mathbb{W}_t^{-1/2}(x,y)=W_t(x-y)$.

From the above properties of $\Phi_{\a}$ we conclude that $\mathbb{W}_t^{\a}(x,y)$ attains
negative values when $\a < -1/2$, $xy < 0$ and $|xy|/t$ is sufficiently large. 
Furthermore, with the
aid of the standard asymptotics \eqref{asym_I} for $I_{\a}$, we also get the following sharp and explicit
description of $\mathbb{W}_t^{\a}(x,y)$.
\begin{propo} \label{prop:Dhker}
Let $\a> -1/2$. The following estimates hold uniformly in $x,y \in \mathbb{R}$ and $t>0$.
\begin{itemize}
\item[(a)] If $xy \ge 0$, then
$$
\mathbb{W}_t^{\a}(x,y) \simeq \begin{cases}
												t^{-\a-1} \exp\left(-\frac{x^2+y^2}{4t}\right),& xy \le t,\\
												(xy)^{-\a-1/2}\frac{1}{\sqrt{t}}\exp\left(-\frac{(x-y)^2}{4t}\right), & xy > t.
											\end{cases}
$$
\item[(b)] If $xy < 0$, then
$$
\mathbb{W}_t^{\a}(x,y) \simeq \begin{cases}
												t^{-\a-1} \exp\left(-\frac{x^2+y^2}{4t}\right),& |xy| \le t,\\
												|xy|^{-\a-3/2}{\sqrt{t}}\exp\left(-\frac{(|x|-|y|)^2}{4t}\right), & |xy| > t.
											\end{cases}
$$
\end{itemize}
\end{propo}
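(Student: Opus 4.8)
The plan is to read the asserted bounds directly off the explicit formula
$$
\mathbb{W}_t^{\a}(x,y) = \frac12 (2t)^{-\a-1} \exp\bigg(-\frac{x^2+y^2}{4t}\bigg)\, \Phi_{\a}\bigg(\frac{xy}{2t}\bigg),
$$
combining the sharp estimates of $\Phi_{\a}$ recalled above (from \cite[Section 3.2]{NoSt5}) with the already-established Hankel heat kernel bound \eqref{W_as} and the asymptotics \eqref{asym_I} for $I_{\a}$. Recall that the latter, together with the continuity and strict positivity of $I_{\a}$ on $(0,\infty)$, give $I_{\a}(v)/v^{\a}\simeq 1$ for $0<v\le1$ and $I_{\a}(v)/v^{\a}\simeq v^{-\a-1/2}e^{v}$ for $v\ge1/2$, which is the threshold matching $|xy|\le t$ versus $|xy|>t$.

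For part (a), where $xy\ge0$, I would substitute the relation $\Phi_{\a}(u)\simeq u^{-\a}I_{\a}(u)$ for $u\ge0$ (valid for all $\a>-1$) into the formula above; after cancelling the common powers of $2t$ this gives simply $\mathbb{W}_t^{\a}(x,y)\simeq W_t^{\a}(|x|,|y|)$, and the claim is then nothing but \eqref{W_as} applied at $(|x|,|y|)$, which makes no difference since $(x-y)^2=(|x|-|y|)^2$ when $xy\ge0$. The values at $xy=0$ fall under the first line, $\Phi_{\a}(0)$ being a positive constant.

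Part (b), where $xy<0$, is the place that requires care, and is the step I expect to be the main (though still modest) obstacle. The naive attempt to use the decomposition $\mathbb{W}_t^{\a}=\frac12[W_t^{\a}(|x|,|y|)+xy\,W_t^{\a+1}(|x|,|y|)]$ breaks down here: by \eqref{W_as} the two summands have comparable magnitude but opposite sign, so there is a genuine cancellation, and one must instead invoke the refined estimate $\Phi_{\a}(u)\simeq|u|^{-\a}I_{\a}(|u|)(1\wedge|u|^{-1})$ for $u<0$, which is available only when $\a>-1/2$ --- it fails at $\a=-1/2$, where $\mathbb{W}_t^{-1/2}(x,y)=W_t(x-y)$ is the Gauss-Weierstrass kernel, and for $\a<-1/2$ the kernel $\mathbb{W}_t^{\a}$ changes sign, as recalled before the statement. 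Substituting this estimate and writing $v=|xy|/(2t)$, the case $|xy|\le t$ ($v\le1/2$) uses $I_{\a}(v)\simeq v^{\a}$ and $1\wedge v^{-1}=1$ and the powers recombine to the first line of (b); in the case $|xy|>t$ ($v>1/2$) one has $I_{\a}(v)(1\wedge v^{-1})\simeq v^{-3/2}e^{v}$, the growth factor $e^{v}=e^{-xy/(2t)}$ combines with the Gaussian via $x^2+y^2+2xy=(x+y)^2=(|x|-|y|)^2$ to produce $\exp(-(|x|-|y|)^2/(4t))$, and the remaining powers recombine to $|xy|^{-\a-3/2}t^{1/2}$, giving the second line. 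Thus, once the $\Phi_{\a}$ estimates are taken as input, the whole proof is elementary; the only thing to watch is the sign bookkeeping in (b) --- correctly matching the cancellation carried by $1\wedge|u|^{-1}$ with the sign of $xy$ in the exponent.
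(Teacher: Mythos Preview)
Your proposal is correct and follows precisely the approach indicated in the paper: the paper derives Proposition~\ref{prop:Dhker} directly from the sharp estimates of $\Phi_{\a}$ quoted from \cite[Section~3.2]{NoSt5} together with the asymptotics~\eqref{asym_I} for $I_{\a}$, without spelling out further details. Your write-up in fact supplies those details explicitly, including the careful sign bookkeeping in~(b) that recombines the exponential factors into $\exp\big(-(|x|-|y|)^2/(4t)\big)$.
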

Item (a) will not be needed in the sequel, nevertheless we state it for the sake of completeness.
We are now in a position to prove Theorem \ref{thm:ker_D}.
\begin{proof}[{Proof of Theorem \ref{thm:ker_D}}]
Since $\mathbb{K}^{-1/2,\s}(x,y)$ is the classical one-dimensional Riesz potential kernel, we look at
$\a > -1/2$.
Consider first $xy \ge 0$. Since $\mathbb{K}^{\a,\s}(x,y) = \mathbb{K}^{\a,\s}(-x,-y)$, we may assume that
$x,y \ge 0$. If $x,y>0$, then we easily get the desired estimates by means of \eqref{kkp2} and Theorem
\ref{thm:ker}. If $x=0$ or $y=0$, then \eqref{kkp2} still holds, with a limiting understanding of the
values of $K^{\a,\s}(x,y)$ and, implicitly, $W_t^{\a}(x,y)$. Tracing the proof of Theorem \ref{thm:ker},
one ensures that the asserted bounds for $K^{\a,\s}(x,y)$ remain true for all $x,y \ge 0$.
Hence the conclusion again follows. 

Now assume that $xy < 0$. By Proposition \ref{prop:Dhker} (b) we have
\begin{equation} \label{rel7}
\mathbb{K}^{\a,\s}(x,y) \simeq \widetilde{K}^{\a,\s}(|x|,|y|),
\end{equation}
where, for $\tilde{x},\tilde{y} > 0$,
\begin{align*}
\widetilde{K}^{\a,\s}(\tilde{x},\tilde{y}) & = 
(\tilde{x}\tilde{y})^{-\a-3/2}\int_0^{\tilde{x}\tilde{y}} t^{\s-1/2}\exp\bigg(
 -\frac{(\tilde{x}-\tilde{y})^2}{4t}\bigg)\, dt
	+ \int_{\tilde{x}\tilde{y}}^{\infty} t^{\s-\a-2} 
	\exp\bigg( -\frac{\tilde{x}^2+\tilde{y}^2}{4t}\bigg) \, dt \\
& \equiv I_0 + I_{\infty}. 
\end{align*}
Here $I_{\infty}$ agrees with $I_{\infty}$ in the proof of Theorem \ref{thm:ker}, and
$$
I_0  = 
(\tilde{x}\tilde{y})^{\s-\a-1} E_{\s-1/2}\bigg( \frac{(\tilde{x}-\tilde{y})^2}{4\tilde{x}\tilde{y}}, 0\bigg).
$$
Proceeding as in the proof of Theorem \ref{thm:ker}, we infer that 
$\widetilde{K}^{\a,\s}(\tilde{x},\tilde{y})\simeq (\tilde{x}+\tilde{y})^{2\s-2\a-2}$. 
This, in view of \eqref{rel7} and the relation
$|x|+|y| = |x-y|$ valid when $xy<0$, finishes the proof.
\end{proof}

To prove Theorem \ref{thm:ker_besD} it is necessary to obtain good estimates of the kernel
$$
\widetilde{H}^{\a,\s}(x,y) = \frac{1}{\Gamma(\s)} \int_0^{\infty} e^{-t}\mathbb{W}_t^{\a}(x,-y) t^{\s-1}\, dt,
	\qquad x,y  >0.
$$
\begin{lem} \label{lem:aker}
Let $\a > -1/2$. The following estimates hold uniformly in $x,y>0$.
\begin{itemize}
\item[(i)]
If $x + y \le 1$, then
$$
\widetilde{H}^{\a,\s}(x,y) \simeq \chi_{\{\s> \a+1\}} + \chi_{\{\s=\a+1\}} \log\frac{1}{x+y}
	+ (x+y)^{2\s-2\a-2}.
$$
\item[(ii)]
If $x+y > 1$, then
$$
\widetilde{H}^{\a,\s}(x,y) \simeq \simeq (x+y)^{-2\a-1} \exp\big(-c|x-y|\big) (x+y)^{-2}.
$$
\end{itemize}
\end{lem}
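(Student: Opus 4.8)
The plan is to follow the template of the proof of Theorem~\ref{thm:ker_bes}, exploiting that the integrals arising here differ only marginally from those handled there. I would begin by invoking Proposition~\ref{prop:Dhker}(b), which for $x,y>0$ (so that $x\cdot(-y)<0$) gives
$$
\widetilde{H}^{\a,\s}(x,y) \simeq (xy)^{-\a-3/2}\int_0^{xy} t^{\s-1/2}e^{-t}\exp\Big(-\frac{(x-y)^2}{4t}\Big)\,dt
	+ \int_{xy}^{\infty} t^{\s-\a-2}e^{-t}\exp\Big(-\frac{x^2+y^2}{4t}\Big)\,dt \equiv I_0 + I_{\infty}.
$$
Changing variables $t\mapsto xy\,t$ in $I_0$ and $t\mapsto xy/t$ in $I_{\infty}$, exactly as in the proof of Theorem~\ref{thm:ker_bes}, produces
$$
I_0 = (xy)^{\s-\a-1}E_{\s-1/2}\Big(\frac{(x-y)^2}{4xy},\,xy\Big), \qquad
I_{\infty} = (xy)^{\s-\a-1}E_{\a-\s}\Big(xy,\,\frac{x^2+y^2}{4xy}\Big).
$$
Here $I_{\infty}$ is literally the same quantity as the term denoted $I_{\infty}$ in the proof of Theorem~\ref{thm:ker_bes}, so all estimates for it may be quoted verbatim. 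Since $\s>0$ forces $\s-1/2>-1$, applying Lemma~\ref{lem:E} to $I_0$ always falls in the regime $A>-1$ (which is why the statement of the present lemma carries no case distinction on $\s$ versus $1/2$, unlike Theorem~\ref{thm:ker_bes}), and yields the single, case-free bound
$$
I_0 \simeq\simeq (xy)^{\s-\a-1}(1\vee xy)^{-\s-1/2}\exp\Big(-c\Big[|x-y|\vee \frac{(x-y)^2}{xy}\Big]\Big).
$$
Finally, $\widetilde{H}^{\a,\s}$ is symmetric in $x$ and $y$ by \eqref{rel}, so throughout the case analysis below I may assume $x\ge y$.

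For part~(i) I would assume $x+y\le 1$; then $xy\le 1/4$, hence $1\vee xy=1$ and the exponential factor in $I_{\infty}$ equals $\exp(-c[(x+y)\vee xy])\simeq 1$. In the comparable subcase $y\le x\le 2y$ one has $xy\simeq(x+y)^2$ and $\frac{(x-y)^2}{xy}\vee|x-y|\lesssim 1$, so $I_0\simeq(x+y)^{2\s-2\a-2}$, which already supplies the Riesz-type term; the quoted estimate of $I_{\infty}$ contributes exactly $\chi_{\{\s>\a+1\}}$, respectively $\chi_{\{\s=\a+1\}}(1+\log\frac1{x+y})$, respectively an extra $(x+y)^{2\s-2\a-2}$ when $\s<\a+1$, and adding the two gives the claimed bound. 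In the non-comparable subcase $x>2y$ one has $\frac{(x-y)^2}{xy}\vee|x-y|\simeq x/y\ge 2$, so the factor $\exp(-cx/y)$ absorbs any power of $x/y$; multiplying the bound for $I_0$ by a suitable such power shows $I_0\lesssim(x+y)^{2\s-2\a-2}\lesssim I_{\infty}$, and (i) follows from the estimate of $I_{\infty}$ alone.

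For part~(ii) I would assume $x+y>1$; since $x\ge y$ this forces $x\gtrsim 1$, so the target $(x+y)^{-2\a-1}\exp(-c|x-y|)(x+y)^{-2}$ is $\simeq\simeq(x+y)^{-2\a-3}\exp(-c|x-y|)$, the polynomial factor being freely absorbable into the exponential. In the comparable subcase $y\le x\le 2y$ one has $xy\simeq(x+y)^2>1$ and $\frac{(x-y)^2}{xy}\vee|x-y|\simeq|x-y|$, whence $I_0\simeq\simeq(xy)^{-\a-3/2}\exp(-c|x-y|)\simeq(x+y)^{-2\a-3}\exp(-c|x-y|)$, which is precisely the target; since $(x+y)\vee xy\simeq(x+y)^2$, the factor $\exp(-c(x+y)^2)$ in $I_{\infty}$ dominates, and after absorbing polynomial factors into it one gets $I_{\infty}\lesssim I_0$. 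In the non-comparable subcase $x>2y$ one has $x+y\simeq x$ and $|x-y|\simeq x$, so the target reduces to $\simeq\simeq\exp(-cx)$, and the argument runs exactly as in the matching case of the proof of Theorem~\ref{thm:ker_bes}: split according to whether $y\ge 1$ or $y<1$, bound $I_0$ and $I_{\infty}$ by Lemma~\ref{lem:E}, and absorb the emerging powers of $x$, $y$ and $x/y$ into the exponentials.

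The routine but somewhat delicate point is exactly this bookkeeping in the non-comparable subcases --- keeping track of which polynomial factor is dominated by which exponential --- and I expect it to be the only place requiring care, in particular near the boundary values $\a=-1/2$ (where the exponents $-2\a-1$, $-2\a-3$ are largest) and $\s=\a+1$ (where the logarithmic terms of Lemma~\ref{lem:E} enter); but both boundary effects are already absorbed into the estimates for $I_{\infty}$ taken over from the proof of Theorem~\ref{thm:ker_bes}, so no genuinely new difficulty arises.
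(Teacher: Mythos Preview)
Your proposal is correct and follows exactly the approach the paper takes: the paper sets up the same decomposition $\widetilde{H}^{\a,\s}\simeq I_0+I_\infty$ via Proposition~\ref{prop:Dhker}(b), observes that $I_\infty$ coincides with the $I_\infty$ of Theorem~\ref{thm:ker_bes} while $I_0=(xy)^{\s-\a-1}E_{\s-1/2}\big(\tfrac{(x-y)^2}{4xy},xy\big)$, and then simply says ``proceeding as in the proof of Theorem~\ref{thm:ker_bes}, we arrive at the desired conclusion; details are left to the reader.'' Your case analysis supplies precisely those omitted details; the only imprecise remark is the aside that the polynomial factor $(x+y)^{-2\a-3}$ is ``freely absorbable into the exponential'' in part~(ii), which is false in the comparable subcase (where $|x-y|$ may vanish while $x+y$ is large) but is not actually used there, since you match $I_0$ to $(x+y)^{-2\a-3}\exp(-c|x-y|)$ directly.
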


\begin{proof}
In view of Proposition \ref{prop:Dhker} (b),
\begin{align*}
\widetilde{H}^{\a,\s}(x,y)
& \simeq (xy)^{-\a-3/2}\int_0^{xy} e^{-t} \exp\bigg(-\frac{(x-y)^2}{4t}\bigg) t^{\s-1/2} \, dt
	+ \int_{xy}^{\infty} e^{-t} \exp\bigg( - \frac{x^2+y^2}{4t}\bigg) t^{\s-\a-2} \, dt \\
& \equiv I_0 + I_{\infty}.
\end{align*}
Here $I_{\infty}$ agrees with $I_{\infty}$ from the proof of Theorem \ref{thm:ker_bes}, and
$$
I_0 = (xy)^{\s-\a-1} E_{\s-1/2}\bigg( \frac{(x-y)^2}{4xy}, xy \bigg).
$$
Proceeding as in the proof of Theorem \ref{thm:ker_bes}, we arrive at the desired conclusion.
Details are left to the reader.
\end{proof}

\begin{proof}[{Proof of Theorem \ref{thm:ker_besD}}]
The reasoning is analogous to that in the proof of Theorem \ref{thm:ker_D}. Here instead of \eqref{kkp2}
and Theorem \ref{thm:ker} one uses \eqref{Bkkp2} and Theorem \ref{thm:ker_bes}, respectively.
The relevant estimate for the case of arguments having opposite signs is provided by Lemma \ref{lem:aker}.
\end{proof}

%%%%%%%%%%%%%%%%%%%%%%%%%%%%%%%%%%%%%%%%%%%%%%%%%%%%%%%%%%%%%
\section{$L^p-L^q$ estimates} \label{sec:LpLq}
%%%%%%%%%%%%%%%%%%%%%%%%%%%%%%%%%%%%%%%%%%%%%%%%%%%%%%%%%%%%%

In this section we prove all the $L^p-L^q$ results in the three settings investigated. 

\subsection{$\boldsymbol{L^p-L^q}$ estimates in the setting of the modified Hankel transform}
It is convenient to prove Theorem \ref{thm:main} first.
In the proof we will need the following characterization of two power-weight
$L^p-L^q$ inequalities for the Hardy operator and its dual.
%%%
\begin{lem} \label{lem:Hardy}
Let $A,B \in \mathbb{R}$ and let $1 \le p,q \le \infty$.
\begin{itemize}
\item[(a)]
The estimate
$$
\bigg\| x^B \int_0^x h(y)\, dy \bigg\|_{L^q(\mathbb{R}_+,dx)} 
	\lesssim \big\|x^A h\big\|_{L^p(\mathbb{R}_+,dx)}
$$
holds uniformly in 
$h \in L^p(\mathbb{R}_+,x^{Ap}dx)$
if and only if $p \le q$ and
$A - \frac{1}{p'} = B+\frac{1}q$ and $A< \frac{1}{p'}$ ($\le$ in case $p=q'=1$). 
\item[(b)]
The estimate
$$
\bigg\| x^B \int_x^{\infty} h(y)\, dy \bigg\|_{L^q(\mathbb{R}_+,dx)} 
	\lesssim \big\|x^A h\big\|_{L^p(\mathbb{R}_+,dx)}
$$
holds uniformly in 
$h \in L^p(\mathbb{R}_+, x^{Ap}dx)$
if and only if $p \le q$ and
$A - \frac{1}{p'} = B+\frac{1}q$ and 
$B>-\frac{1}q$ ($\ge$ in case $p=q'=1$). 
\end{itemize}
\end{lem}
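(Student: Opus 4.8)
The plan is to prove Lemma~\ref{lem:Hardy} by the classical characterization of one-weight $L^p$--$L^q$ Hardy inequalities, reducing the power-weight statement to a known criterion. Recall that for $1<p\le q<\infty$ the inequality $\big\|\int_0^x h\big\|_{L^q(u\,dx)}\lesssim \|h\|_{L^p(v\,dx)}$ holds iff $\sup_{r>0}\big(\int_r^\infty u\big)^{1/q}\big(\int_0^r v^{1-p'}\big)^{1/p'}<\infty$, with the corresponding limiting versions when $p$ or $q$ takes the value $1$ or $\infty$, and the dual statement for $\int_x^\infty h$ with the roles of the intervals swapped. I would first reduce part~(a) to this by substituting $h(y)=y^{-A}\tilde h(y)$, which turns the claimed estimate into $\big\|x^B\int_0^x y^{-A}\tilde h(y)\,dy\big\|_{L^q(dx)}\lesssim \|\tilde h\|_{L^p(dx)}$, i.e.\ a weighted Hardy inequality with weights that are pure powers of $x$.

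First I would dispose of the exponent relation. Plugging the test function $h(y)=y^{-A}\chi_{(1,R)}(y)$ (or a dilate thereof) and using the homogeneity of the two sides under $x\mapsto \lambda x$ forces the scaling identity $A-\frac1{p'}=B+\frac1q$; without it the two sides scale at different rates and no uniform constant can exist. This is the ``(b) is forced by homogeneity''-type observation and it is completely routine. Once that relation is imposed, the Muckenhoupt-type supremum for part~(a) becomes, after a direct computation, the requirement that a certain pure power of $r$ be bounded as $r$ ranges over $(0,\infty)$; the power turns out to vanish precisely because of the scaling relation, so the supremum is finite and equals a finite constant \emph{provided} the two tail integrals $\int_0^r y^{-Ap'}\,dy$ (near $0$) converge, which is exactly the condition $A<\frac1{p'}$ (or $A=\frac1{p'}$ when $p=q'=1$, where the endpoint behaves well because the inner integral is then handled by a logarithmic/duality refinement). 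Part~(b) is treated identically but with $\int_0^r$ replaced by $\int_r^\infty$, so the convergence condition is at $\infty$ and reads $B>-\frac1q$, with the matching endpoint $B=-\frac1q$ permitted when $p=q'=1$. For necessity, besides the scaling relation one tests against $h(y)=y^{-A}\chi_{(0,1)}(y)$ (resp.\ $\chi_{(1,\infty)}$) to see the strict inequalities $A<\frac1{p'}$, $B>-\frac1q$ must hold, and against $h=y^{-A}\chi_{(1,R)}$ with $R\to\infty$ to see that $p\le q$ is necessary; the borderline $p=q'=1$ case is checked by a separate direct computation showing the endpoint is attained.

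I would organize the write-up as: (1) observe the scaling relation is necessary and henceforth assume it; (2) substitute to pass to a pure-power Hardy inequality; (3) invoke the standard Hardy inequality characterization (with a precise reference, e.g.\ Maz'ya or Opic--Kufner) and verify the Muckenhoupt supremum reduces to the stated pointwise conditions; (4) handle the endpoint cases $p=1$, $q=\infty$, $p=q'=1$ by hand; (5) prove necessity of $p\le q$ and of the strict inequalities by explicit test functions. The main obstacle I anticipate is keeping the bookkeeping of endpoint cases honest: the values $p=1$, $q=\infty$, and especially $p=q'=1$ each require the limiting form of the Hardy characterization, and the statement as given admits ``$\le$'' rather than ``$<$'' exactly in the $p=q'=1$ situation, so one must check that the degenerate supremum condition there really does correspond to the closed inequality and not to a strictly stronger or weaker constraint. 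Everything else is a routine application of a textbook theorem.
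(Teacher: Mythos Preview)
Your overall strategy---reducing to the classical Muckenhoupt--Bradley characterization of power-weighted Hardy inequalities---is essentially what the paper does: for $p\le q$ it simply cites Bradley's theorem specialized to power weights, and your steps (1)--(4) amount to spelling that citation out.

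There is, however, a genuine gap in step~(5). The test function $h(y)=y^{-A}\chi_{(1,R)}(y)$ does \emph{not} detect the necessity of $p\le q$. Once the scaling relation $A-\tfrac1{p'}=B+\tfrac1q$ and the condition $A<\tfrac1{p'}$ are in force, a direct computation gives $\|x^A h\|_{L^p}\simeq R^{1/p}$, while on the left the contributions from $x\in(1,R)$ and from $x>R$ each contribute $\simeq R^{q/p}$ to the $q$th power, so $\big\|x^B\int_0^x h\big\|_{L^q}\simeq R^{1/p}$ as well. This is no accident: with the scaling relation imposed, the operator is dilation-invariant (equivalently, a convolution on the multiplicative group $(\mathbb{R}_+,\tfrac{dx}{x})$), so any single-scale bump produces matching powers on the two sides and cannot separate $p>q$ from $p\le q$. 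To rule out $q<p$ you need a genuinely multi-scale argument---superpose bumps at many dyadic scales and compare the resulting $\ell^q$ versus $\ell^p$ sums, \`a la H\"ormander's theorem on translation-invariant operators on non-compact groups---or else invoke the known characterization of the Hardy inequality in the regime $q<p$. The paper does the latter: for $q<p<\infty$ it cites the Sinnamon--Stepanov criterion, and for $q<p=\infty$ it gives the explicit counterexample $h(y)=y^{-A}$, for which $\|x^A h\|_{L^\infty}=1$ while $x^B\int_0^x h = c\,x^{-1/q}\notin L^q(\mathbb{R}_+,dx)$.
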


\begin{proof}
The case $p \le q$ is contained in \cite[Theorems 1 and 2]{B} specified to power weights.
On the other hand, it seems to be well known, at least as a folklore, that the estimates in (a) and (b)
do not hold when $q < p$. In (a) the case $q<p<\infty$ can be easily concluded, for instance, from
\cite[Theorem 2.4]{SS} (see also references given there). The analogous fact in (b) follows by duality.
For $q<p=\infty$ a direct counterexample of $h(y) = y^{-A}$ does the job.
\end{proof}

Another tool we shall use is Young's inequality in the context of the multiplicative group
$G = (\mathbb{R}_+,\frac{dx}x)$ equipped with the natural convolution 
$f\star g(x) = \int_{\mathbb{R}_+} f(y)g(y^{-1}x)\, \frac{dy}y$, see e.g.\ \cite[Theorem 1.2.12]{G}.
Note that an extension of Young's inequality to weak type spaces (cf.\ \cite[Theorem 1.4.24]{G}), 
in the context of $G$ or $(\mathbb{R}\setminus \{0\},\frac{dx}{|x|})$, was one of the
main tools in \cite{DDD}.
\begin{lem}[Young's inequality] \label{lem:Young}
Let $1\le p,q,r \le \infty$ satisfy $\frac{1}q+1 = \frac{1}p + \frac{1}r$. Then for any $f \in L^p(G)$
and $g \in L^r(G)$ we have $f\star g \in L^q(G)$ and
$$
\|f\star g\|_{L^q(G)} \le \|g\|_{L^r(G)} \|f\|_{L^p(G)}.
$$
\end{lem}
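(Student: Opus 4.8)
The plan is to reduce the statement to the classical Young inequality on the additive group $(\mathbb{R},+)$ with Lebesgue measure, exactly as recorded in \cite[Theorem 1.2.12]{G}. The point is that $G=(\mathbb{R}_+,\frac{dx}{x})$ is, as a measured locally compact abelian group, isomorphic to $(\mathbb{R},dx)$: the map $T(x)=\log x$ is a group isomorphism $(\mathbb{R}_+,\cdot)\to(\mathbb{R},+)$ which pushes $\frac{dx}{x}$ forward to Lebesgue measure, since $d(\log x)=\frac{dx}{x}$. First I would check that $T$ transports $\star$ into the usual additive convolution: writing $\tilde h = h\circ\exp$, the substitution $y=e^v$ in $f\star g(e^u)=\int_0^\infty f(y)g(y^{-1}e^u)\frac{dy}{y}$ gives $\widetilde{f\star g}(u)=\int_{\mathbb{R}}\tilde f(v)\tilde g(u-v)\,dv=(\tilde f\ast\tilde g)(u)$. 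Since moreover $\|h\|_{L^s(G)}=\|\tilde h\|_{L^s(\mathbb{R},dx)}$ for every $1\le s\le\infty$, and the condition $\frac1q+1=\frac1p+\frac1r$ is unaffected by $T$, the asserted inequality is literally the classical Young inequality applied to $\tilde f$ and $\tilde g$.

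Alternatively, and this is the route I would actually spell out if a self-contained argument were preferred, one proves the inequality directly by the standard three-exponent H\"older argument carried out with respect to Haar measure on $G$. In the generic case $1<p,r,q<\infty$ (note that $\frac1p+\frac1r-\frac1q=1$ forces $p,r\le q$) I would start from the pointwise factorization
\[
|f(y)|\,|g(y^{-1}x)| = \big(|f(y)|^p |g(y^{-1}x)|^r\big)^{1/q}\,|f(y)|^{1-p/q}\,|g(y^{-1}x)|^{1-r/q}
\]
and apply H\"older in $y$ against $\frac{dy}{y}$ with the three exponents $q$, $\big(\tfrac1p-\tfrac1q\big)^{-1}$, $\big(\tfrac1r-\tfrac1q\big)^{-1}$, whose reciprocals sum to $1$ precisely because $\frac1p+\frac1r-\frac1q=1$. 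Using invariance of $\frac{dx}{x}$ under $y\mapsto cy$ and $y\mapsto y^{-1}$ one has $\int|g(y^{-1}x)|^r\frac{dy}{y}=\|g\|_{L^r(G)}^r$, so this bounds $f\star g(x)$ by $\big(\int|f(y)|^p|g(y^{-1}x)|^r\frac{dy}{y}\big)^{1/q}\|f\|_{L^p(G)}^{1-p/q}\|g\|_{L^r(G)}^{1-r/q}$. Raising to the power $q$, integrating in $x$ against $\frac{dx}{x}$, and evaluating the resulting double integral by Tonelli and Haar invariance as $\|f\|_{L^p(G)}^p\|g\|_{L^r(G)}^r$ then yields $\|f\star g\|_{L^q(G)}^q\le\|f\|_{L^p(G)}^q\|g\|_{L^r(G)}^q$.

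The endpoint configurations are the only place where any care is needed, and they are disposed of separately: if $q=\infty$ then $\frac1p+\frac1r=1$ and the bound is immediate from plain H\"older applied to $\int f(y)g(y^{-1}x)\frac{dy}{y}$; if $p=\infty$ or $r=\infty$ one is forced into $q=\infty$ and the same applies; while $p=1$ (so $q=r$) and $r=1$ (so $q=p$) make one auxiliary exponent infinite, so the three-exponent step collapses to an $L^\infty$–$L^1$ pairing and the estimate follows from Tonelli and Haar invariance alone. I do not expect any genuine obstacle here — the content is entirely classical; the only things to get right are the translation/inversion invariance of $\frac{dx}{x}$ and the sum-to-one check for the H\"older exponents.
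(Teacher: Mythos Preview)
Your argument is correct: both the change-of-variables reduction to the additive group $(\mathbb{R},dx)$ via $\log$ and the direct three-exponent H\"older proof are standard and valid, and you have handled the endpoint cases properly.

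However, there is nothing to compare against: the paper does not prove this lemma at all. It is stated as a known tool, with a reference to \cite[Theorem 1.2.12]{G} (Grafakos), and then simply used in the proof of Theorem~\ref{thm:main}. So your write-up supplies strictly more than the paper does. If you want to match the paper's treatment, a one-line citation suffices; if you want a self-contained account, either of your two routes is fine, with the $\log$-isomorphism being the shortest.
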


We are now prepared to prove Theorem \ref{thm:main}.

\begin{proof}[{Proof of Theorem \ref{thm:main}}]
By Theorem \ref{thm:ker} we have
$$
K^{\a,\s}(x,y) \simeq (x+y)^{2\s-2\a-2} + \chi_{\{x/2<y<2x\}} y^{-2\a-1}
	\bigg[ \chi_{\{\s<1/2\}} |x-y|^{2\s-1} + \chi_{\{\s=1/2\}} \log\frac{2(x+y)}{|x-y|} \bigg],
$$
uniformly in $x,y > 0$. Thus we can write the estimates
\begin{equation} \label{tse}
I^{\a,\s} f \simeq H_0f + H_{\infty}f + \chi_{\{\s< 1/2\}} Tf + \chi_{\{\s=1/2\}} Sf, \qquad f \ge 0,
\end{equation}
where the relevant operators are defined as follows:
\begin{align*}
H_0f(x)  = x^{2\s-2\a-2} \int_0^x f(y)\, d\mu_{\a}(y),& \qquad 
H_{\infty}f(x)  = \int_x^{\infty} y^{2\s-2\a-2}f(y) \, d\mu_{\a}(y), \\
Tf(x)  = \int_{x/2}^{2x} |x-y|^{2\s-1} f(y)\, dy, &  \qquad
Sf(x)  = \int_{x/2}^{2x} \log\frac{2(x+y)}{|x-y|} f(y)\, dy.
\end{align*}
Clearly, if each term of the right-hand side in \eqref{tse} is well defined
(i.e.\ the defining integrals converge for a.a.\ $x>0$) for a fixed, not necessarily non-negative $f$, 
then so is $I^{\a,\s}$. On the other hand, if any of these terms is not well defined for an $f \ge 0$, 
then neither is $I^{\a,\s}f$. Similar implications pertain to weighted $L^p-L^q$ mapping properties.

We first prove (i). Let $f \in L^p(x^{ap}d\mu_{\a})$. 
By means of H\"older's inequality
it is straightforward to verify that $H_0 f$ is well defined when $a< \frac{2\a+2}{p'}$ ($\le$ if $p=1$)
and $H_{\infty}f$ is well defined when $a > 2\s-\frac{2\a+2}{p}$ ($\ge$ if $p=1$). These conditions are
sharp in the sense that if $a$ is beyond the indicated ranges, then there exists a function 
$g \in L^p(x^{ap}d\mu_{\a})$ such that 
$H_0 g(x) = \infty$, $x > 0$, or $H_{\infty}g(x)=\infty$, $x>0$, respectively.
Essentially, the simplest examples of such $g$ are the following. In case of $H_0$,
$g(y) = \chi_{\{y < 1\}}y^{-2\a-2}$ when
$a>\frac{2\a+2}{p'}$ ($\ge$ if $p=\infty$), and 
$g(y) = \chi_{\{y < 1\}}y^{-2\a-2}/\log\frac{2}y$ when $a=\frac{2\a+2}{p'}$
and $1< p < \infty$. In case of $H_{\infty}$, 
$g(y) = \chi_{\{y>2\}}y^{-2\s}$ when $a < 2\s - \frac{2\a+2}p$ ($\le$ if $p=\infty$), and
$g(y) = \chi_{\{y>2\}} y^{-2\s}/\log y$ when $a=2\s-\frac{2\a+2}{p}$ and $1< p < \infty$. Altogether,
this shows that condition \eqref{cnd17}
is necessary and sufficient for the sum $H_0f + H_{\infty}f$ to be well defined.

Now it suffices to ensure that $Tf$ and $Sf$ are well defined under \eqref{cnd17}.
But even more is true, since in fact no restrictions on $p$ and $a$ are needed.
Indeed, let $0 \le f \in L^p(x^{ap}d\mu_{\a})$ 
with arbitrary 
$a \in \mathbb{R}$ and $1\le p \le \infty$. Consider $\tilde{f} = f\chi_{(1/n,n)}$, $n$ large and fixed.
Since $Tf(x) = T\tilde{f}(x)$ for $x \in (2/n, n/2)$, it is enough to check that $T\tilde{f}$ is well
defined, and similarly in case of $S$. Observe that $\tilde{f} \in L^p(\mathbb{R}_{+},dx)$ and since
its support is bounded, also $\tilde{f} \in L^1(\mathbb{R}_+,dx)$. Then using the Fubini-Tonelli theorem
we get
$$
\|T\tilde{f}\|_{L^1(\mathbb{R}_+,dx)} = \int_0^{\infty}\int_{y/2}^{2y} |x-y|^{2\s-1}\, dx \, \tilde{f}(y)\,dy
	\simeq \int_0^{\infty} y^{2\s} \tilde{f}(y)\, dy \lesssim \|\tilde{f}\|_{L^1(\mathbb{R}_+,dx)} < \infty.
$$
The case of $S$ is analogous, a simple computation leads to
$$
\|S\tilde{f}\|_{L^1(\mathbb{R}_+,dx)} = \int_0^{\infty}\int_{y/2}^{2y} \log\frac{2(x+y)}{|x-y|}\, dx
	\, \tilde{f}(y)\, dy \simeq \int_0^{\infty} y \tilde{f}(y)\, dy < \infty.
$$
The conclusion follows.

We pass to proving (ii). We will analyze separately each of the four terms on the right-hand side of
\eqref{tse}. Altogether, this will imply (ii). Since all the considered operators are non-negative,
below we may and always do assume that $f \ge 0$.\\
\textbf{Analysis of $\boldsymbol{H_0}$.} 
Substituting $f(y) = y^{-2\a-1}h(y)$ we see that the estimate
$$
\|x^{-b}H_0 f\|_{L^q(d\mu_{\a})} \lesssim \|x^a f\|_{L^p(d\mu_{\a})}
$$
is equivalent to
$$
\bigg\| x^{-b+(2\a+1)/q} \int_0^x h(y) \, dy\bigg\|_{L^q(\mathbb{R}_+,dx)} 
	\lesssim \big\| x^{a+(2\a+1)/p-2\a-1}h\big\|_{L^p(\mathbb{R}_+,dx)}.
$$
Applying now Lemma \ref{lem:Hardy} (a) (specified to $A=a-\frac{2\a+1}{p'}$ and 
$B=-b + \frac{2\a+1}{q}+2\s - 2\a-2$) we conclude that this holds if and only if $p \le q$ and
$\frac{1}q = \frac{1}p + \frac{a+b-2\s}{2\a+2}$ and $a < \frac{2\a+2}{p'}$ ($\le$ in case $p=q'=1$).
The latter three are precisely conditions (a), (b) and (c) of the theorem.\\
\textbf{Analysis of $\boldsymbol{H_{\infty}}$.}
Substituting $f(y) = y^{1-2\s}h(y)$ we can write the estimate
$$
\|x^{-b}H_{\infty} f\|_{L^q(d\mu_{\a})} \lesssim \|x^a f\|_{L^p(d\mu_{\a})}
$$
in the equivalent form
$$
\bigg\| x^{-b+(2\a+1)/q} \int_x^{\infty}h(y)\,dy\bigg\|_{L^q(\mathbb{R}_+,dx)} 
	\lesssim \big\| x^{a+(2\a+1)/p+1-2\s}h\big\|_{L^p(\mathbb{R}_+,dx)}.
$$
Using Lemma \ref{lem:Hardy} (b) (with $A=a+\frac{2\a+1}p + 1-2\s$ and $B=-b + \frac{2\a+1}q$) we infer
that this holds if and only if $p \le q$ and $\frac{1}q = \frac{1}p + \frac{a+b-2\s}{2\a+2}$
and $b < \frac{2\a+2}q$ ($\le$ in case $p=q'=1$). These are conditions (a), (b) and (d) of the theorem.\\
\textbf{Analysis of $\boldsymbol{T}$ in case $\boldsymbol{\s<1/2}$.} 
Here we may assume that (a)-(d) are satisfied. 
Observe that, in view of condition (b),
$$
a+\frac{2\a+1}p + b - \frac{2\a+1}q
= 2\s + \frac{1}q - \frac{1}p.
$$
Thus, letting $f(y) = g(y) y^{-a-(2\a+1)/p}$, we see that the estimate
$$
\|x^{-b}T f\|_{L^q(d\mu_{\a})} \lesssim \|x^a f\|_{L^p(d\mu_{\a})}
$$
can be restated as
\begin{equation} \label{loc1}
\big\| T\big(y^{-2\s-1/q+1/p}g\big)\big\|_{L^q(\mathbb{R}_+,dx)} \lesssim \|g\|_{L^p(\mathbb{R}_+,dx)}.
\end{equation}

We claim that condition (e) is necessary for \eqref{loc1} to hold. 
Indeed, assume that $\frac{1}q < \frac{1}p-2\s$
and take $g(y) = \chi_{(1/2,1)}(y)\, (1-y)^{\gamma}$ with $\gamma = -\frac{1}p+\varepsilon$, where
$0 < \varepsilon < \frac{1}p-2\s-\frac{1}q$. Then the right-hand side of \eqref{loc1} is finite. 
On the other hand, for $x \in (1,3/2)$
\begin{align*}
\int_{x/2}^{2x} |x-y|^{2\s-1} \chi_{(1/2,1)}(y)\,(1-y)^{\gamma} y^{-2\s-1/q+1/p}\, dy & \gtrsim
\int_{3/4}^1 |x-y|^{2\s-1} (1-y)^{\gamma}\, dy \\
& = (x-1)^{2\s+\gamma} \int_0^{1/(4(x-1))} (1+u)^{2\s-1} u^{\gamma}\, du.
\end{align*}
Since here $1/(4(x-1))> 1/2$, the last integral is larger than a positive constant and so the 
left-hand side in \eqref{loc1} is larger than the constant times
$$
\big\| \chi_{(1,3/2)}(x)\,(x-1)^{2\s+\gamma}\big\|_{L^{\infty}(dx)}.
$$
But this expression is infinite since $2\s+\gamma < -\frac{1}{q}$, so \eqref{loc1} does not hold.
When $p=1$ and $\frac{1}q=1-2\s$, the counterexample of 
$g(y) = \chi_{(1/2,1)}(y)\, (1-y)^{-2\s}/\log\frac{2}{1-y}$ shows in a similar manner that $T$
is not bounded from $L^{1/(2\s)}(\mathbb{R}_+,dx)$ to $L^{\infty}(\mathbb{R}_+,dx)$,
and then, by duality, neither bounded from $L^1(\mathbb{R}_+,dx)$ to $L^{1/(1-2\s)}(\mathbb{R}_+,dx)$.
The claim follows.

Next, we prove that condition (e) is sufficient for \eqref{loc1}. When $\frac{1}q=\frac{1}p-2\s$, this
is an obvious consequence of the classical Hardy-Littlewood-Sobolev theorem in dimension one. So it remains
to verify \eqref{loc1} in case $\frac{1}q > \frac{1}p - 2\s$.
For this purpose we let $g(x) = F(x) x^{-1/p}$ and write \eqref{loc1} as
\begin{equation} \label{loc2}
\bigg\|\int_{x/2}^{2x} |y^{-1}x-1|^{2\s-1}\, F(y) \,\frac{dy}y\bigg\|_{L^q(\mathbb{R}_+,\frac{dx}x)}
\lesssim \|F\|_{L^p(\mathbb{R}_+,\frac{dx}x)}. 
\end{equation}
This is precisely $L^p-L^q$ estimate for the convolution operator on the multiplicative group
$(\mathbb{R}_+,\frac{dx}x)$ given by the convolution kernel
$$
K(u) = \chi_{(1/2,2)}(u)\, |u-1|^{2\s-1}, \qquad u > 0.
$$
Notice that $K \in L^r(\mathbb{R}_+,\frac{dx}x)$ if (and only if) 
$\frac{1}r > 1-2\s$. Now, with the aid of Lemma \ref{lem:Young} we readily arrive at the desired conclusion.\\
\textbf{Analysis of $\boldsymbol{S}$ in case $\boldsymbol{\s=1/2}$.}
When $\s=1/2$ condition (e) says that $(p,q) \neq (1,\infty)$. Assuming that, the estimate 
we must prove is equivalent to
$$
\bigg\|\int_{x/2}^{2x} \log\frac{2(y^{-1}x+1)}{|y^{-1}x-1|} 
	\, F(y) \,\frac{dy}y\bigg\|_{L^q(\mathbb{R}_+,\frac{dx}x)}
	\lesssim \|F\|_{L^p(\mathbb{R}_+,\frac{dx}x)}. 
$$
Now the relevant convolution kernel is
$$
K(u) = \chi_{(1/2,2)}(u)\,  \log\frac{2(u+1)}{|u-1|}, \qquad u > 0.
$$
Since $K \in L^r(\mathbb{R}_+,\frac{dx}x)$ for all $1 \le r < \infty$, 
the conclusion follows by Lemma \ref{lem:Young}.

On the other hand, the estimate
$$
\|x^{-b}S f\|_{L^{\infty}(d\mu_{\a})} \lesssim \|x^a f\|_{L^1(d\mu_{\a})}
$$
does not hold. To see this, consider $f(y) = \chi_{(1/2,1)}(y)/((1-y)\log^2\frac{2}{1-y})$.
Then $\|x^a f\|_{L^1(d\mu_{\a})}$ is finite, but
$$
\|x^{-b} Sf\|_{L^{\infty}(d\mu_{\a})} \gtrsim \essup_{1/2<x<1} Sf(x) \ge 
	\int_{1/2}^1 \log\frac{2(1+y)}{1-y} f(y)\, dy \ge \int_{1/2}^1 \frac{dy}{(1-y)\log\frac{2}{1-y}} = \infty.
$$

The proof of Theorem \ref{thm:main} is now complete.
\end{proof}

We turn to proving Theorem \ref{thm:LpLq}. Items (i) and (ii) follow immediately from 
Theorem \ref{thm:main} specified to $a=b=0$, so it remains to prove (iii). 
\begin{proof}[{Proof of Theorem \ref{thm:LpLq} (iii)}]

Assume that $\a \ge -1/2$ and fix $q = \frac{\a+1}{\a+1-\s}$. 
We will prove that $I^{\a,\s}$ is of weak type $(1,q)$.
Because of \eqref{tse}, it suffices to show the weak type $(1,q)$ of the operators $H_0+H_{\infty}$,
$T$ in case $\s< 1/2$ and $S$ in case $\s=1/2$.

Treatment of $H_0+H_{\infty}$ is straightforward. We have
$$
(H_0+H_{\infty})f(x) \lesssim \int_0^{\infty} (x+y)^{2\s-2\a-2} f(y)\, d\mu_{\a}(y), \qquad x > 0,
$$
uniformly in $f \ge 0$. Since 
$\|(x+\cdot)^{2\s-2\a-2}\|_{L^{\infty}(\mathbb{R}_+)} \simeq x^{2\s-2\a-2}$, $x>0$, we get
$$
|(H_0+H_{\infty})f(x)| \lesssim x^{2\s-2\a-2} \|f\|_{L^1(d\mu_{\a})}, \qquad x >0.
$$
As easily verified, the function $x \mapsto x^{2\s-2\a-2}$ belongs to weak $L^q(d\mu_{\a})$ and the desired
conclusion follows.

Next, let $\s<1/2$ and consider the operator $T$. In case $\a=-1/2$ the weak type $(1,\frac{1}{1-2\s})$
follows from the analogous well-known result for the classical one-dimensional Riesz potential.
Thus we may assume $\a > -1/2$. We claim that $T$ is even bounded from $L^1(d\mu_{\a})$ to $L^q(d\mu_{\a})$,
$q=\frac{\a+1}{\a+1-\s}$. To prove the claim, we show that $T$ is bounded from $L^{q'}(d\mu_{\a})$,
$q'=\frac{\a+1}{\s}$, to $L^{\infty}(d\mu_{\a})$. This is enough, because the kernel of $T$ is symmetric
and $L^1(d\mu_{\a}) \subset (L^1(d\mu_{\a}))^{**} = (L^{\infty}(d\mu_{\a}))^*$. 
By H\"older's inequality and the change of variable $y/x=u$ we obtain
\begin{align*}
|Tf(x)| & \lesssim \|f\|_{L^{q'}(d\mu_{\a})} \bigg( \int_{x/2}^{2x} \big( y^{-(2\a+1)/q'}
	|x-y|^{2\s-1}\big)^q\, dy\bigg)^{1/q} \\
& = \|f\|_{L^{q'}(d\mu_{\a})} \bigg(\int_{1/2}^{2} 
	u^{-(2\a+1)q/q'} |1-u|^{(2\s-1)q}\, du\bigg)^{1/q}.
\end{align*}
Since $(2\s-1)q > -1$ when $\a > -1/2$, the last integral is finite. 
We see that $|Tf(x)| \lesssim \|f\|_{L^{q'}(d\mu_{\a})}$, $x>0$, and the claim follows. 

Finally, consider the operator $S$ in case $\s=1/2$. 
We will check that $S$ is even bounded from $L^1(d\mu_{\a})$ to $L^q(d\mu_{\a})$, $q=\frac{\a+1}{\a+1/2}$.
Proceeding as in case of $T$ above we arrive at
\begin{align*}
|Sf(x)| & \lesssim \|f\|_{L^{q'}(d\mu_{\a})} \bigg( \int_{x/2}^{2x} \bigg( y^{-(2\a+1)/q'}
	\log\frac{2(x+y)}{|x-y|}\bigg)^q\, dy\bigg)^{1/q} \\
& = \|f\|_{L^{q'}(d\mu_{\a})} \bigg(\int_{1/2}^{2} 
	\Big(u^{-(2\a+1)/q'}\log\frac{2(1+u)}{|1-u|}\Big)^{q}\, du\bigg)^{1/q}.
\end{align*}
Here the last integral is finite, so $|Sf(x)| \lesssim \|f\|_{L^{q'}(d\mu_{\a})}$, $x>0$, and
the conclusion follows.

To finish the proof, we need to ensure that $I^{\a,\s}$ is not of weak type $(1,\frac{\a+1}{\a+1-\s})$
when $\a < -1/2$. This follows by an \emph{au contraire} argument. If $I^{\a,\s}$ were
of weak type $(1,\frac{\a+1}{\a+1-\s})$, then by duality 
(see the proof of \cite[Theorem 3.1]{NoSt4} or item (C) in the beginning of \cite[Section 4]{NR}) 
it would be of restricted weak type $(\frac{\a+1}{\s},\infty)$
and then, by interpolation, bounded from $L^p(d\mu_{\a})$ to $L^q(d\mu_{\a})$ for $p$ and $q$ satisfying
$\frac{1}q=\frac{1}p-\frac{\s}{\a+1}$ and $1 < p < \frac{\a+1}{\s}$. 
But this contradicts the necessity part of item (ii).
\end{proof}

Passing to the proof of Theorem \ref{thm:LpLq_bes}, it is easily seen that it
follows from Lemmas \ref{lem:bes_loc} and \ref{lem:bes_glob}
below that describe sharply $L^p-L^q$ behavior of two auxiliary operators (with non-negative kernels)
into which $J^{\a,\s}$ splits naturally. These operators are interesting
in their own right, so 
the lemmas provide slightly more information than
actually needed to conclude Theorem \ref{thm:LpLq_bes}.

We split $J^{\a,\s}$ according to the kernel splitting
$$
H^{\a,\s}(x,y)  = \chi_{\{x\le 2,y\le 2\}} H^{\a,\s}(x,y) + \chi_{\{x \vee y > 2\}}H^{\a,\s}(x,y)
     \equiv H^{\a,\s}_0(x,y) + H^{\a,\s}_\infty(x,y)
$$
and denote the resulting integral operators by $J^{\a,\s}_0$ and $J^{\a,\s}_\infty$, respectively.
%%%%%%%%%%%%%%%%%%%%%%%%%%%%%%%%%%%%%%%%%
\begin{lem}[{\cite[Lemma 4.1]{NoSt5} and \cite{NR}}] \label{lem:bes_loc}
Let $\a > -1$, $\s>0$ and $1 \le p,q \le \infty$. Set
$\delta:= ((-1/2) \vee \a) + 1$. Then $J_0^{\a,\s}$ is bounded from $L^p(d\mu_{\a})$ to $L^q(d\mu_{\a})$
if and only if
$$
\frac{1}{p} - \frac{\s}{\delta} \le \frac{1}q \qquad \textrm{and} \qquad
\bigg(\frac{1}p,\frac{1}q\bigg) \notin \bigg\{
\Big(\frac{\s}{\delta},0\Big), \Big(1,1-\frac{\s}{\delta}\Big)\bigg\}.
$$
\end{lem}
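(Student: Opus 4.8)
The plan is to use the sharp kernel estimate of Theorem~\ref{thm:ker_bes}(i) to reduce the boundedness of $J_0^{\a,\s}$ to that of two explicit model operators, and then to obtain the stated conditions as the conjunction of the corresponding ones, the parameter $\delta$ recording which of the two is responsible for the binding constraint. Since $H_0^{\a,\s}$ is supported in $\{x,y\le 2\}$ --- a set of finite $\mu_\a$-measure --- and, by Theorem~\ref{thm:ker_bes}(i) together with a harmless adjustment on $\{1<x+y\le 4,\ x,y\le 2\}$ (where Theorem~\ref{thm:ker_bes}(ii) applies and $\exp(-c|x-y|)\simeq 1$), it satisfies
$$
H_0^{\a,\s}(x,y)\simeq \chi_{\{x,y\le 2\}}\big[A(x,y)+B(x,y)\big],
$$
with an ``origin part''
$$
A(x,y)=\chi_{\{\s>\a+1\}}+\chi_{\{\s=\a+1\}}\log\tfrac1{x+y}+(x+y)^{2\s-2\a-2}
$$
and a ``diagonal part''
$$
B(x,y)=\chi_{\{x/2<y<2x\}}\,y^{-2\a-1}\big[\chi_{\{\s<1/2\}}|x-y|^{2\s-1}+\chi_{\{\s=1/2\}}\log\tfrac{2(x+y)}{|x-y|}\big]
$$
(so $B\equiv 0$ when $\s>1/2$, the diagonal contribution then being absorbed into $(x+y)^{2\s-2\a-2}$). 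Since $A,B\lesssim H_0^{\a,\s}$ while $H_0^{\a,\s}\lesssim A+B$, testing on nonnegative $f$ shows that $J_0^{\a,\s}$ is bounded from $L^p(d\mu_\a)$ to $L^q(d\mu_\a)$ if and only if both integral operators $\mathcal A,\mathcal B$ with kernels $\chi_{\{x,y\le 2\}}A$, $\chi_{\{x,y\le 2\}}B$ are.

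First I would handle $\mathcal A$. The constant term gives a bounded operator for all $1\le p,q\le\infty$, by finiteness of $\mu_\a((0,2))$; the logarithmic term (present only if $\s=\a+1$) is bounded precisely for $(p,q)\ne(1,\infty)$, since $\log\tfrac1{x+\cdot}\notin L^\infty(d\mu_\a)$ uniformly in $x$ but lies in every $L^r(d\mu_\a)$, $r<\infty$. When $\s<\a+1$, using $(x+y)^{2\s-2\a-2}\simeq(x\vee y)^{2\s-2\a-2}$ the power term splits as $x^{2\s-2\a-2}\int_0^x f\,d\mu_\a+\int_x^2 y^{2\s-1}f(y)\,dy$; after the substitution $f(y)=y^{-(2\a+1)/p}h(y)$ each summand becomes a one-dimensional weighted Hardy, respectively dual Hardy, estimate on $(0,2)$. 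Lemma~\ref{lem:Hardy}, adapted to the bounded interval --- where the scaling equality relaxes to $\tfrac1q\ge\tfrac1p-\tfrac{\s}{\a+1}$ and the strict endpoint conditions acquire exceptions exactly at $(\tfrac1p,\tfrac1q)=(\tfrac{\s}{\a+1},0)$ and $(1,1-\tfrac{\s}{\a+1})$ --- then yields that $\mathcal A$ is bounded iff $\tfrac1q\ge\tfrac1p-\tfrac{\s}{\a+1}$ off those two points (and for $\s\ge\a+1$ the power term is bounded, so no constraint beyond the logarithmic one arises).

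Next comes $\mathcal B$, relevant only when $\s\le 1/2$ (when $\s>1/2$ the claim follows from $\mathcal A$ alone). On $\{x/2<y<2x\}$ one has $y^{-2\a-1}d\mu_\a(y)=dy$, so $\mathcal B$ is the truncated fractional integral $f\mapsto\chi_{\{x\le 2\}}\int_{x/2}^{2x}|x-y|^{2\s-1}f(y)\,dy$ (with $\log\tfrac{2(x+y)}{|x-y|}$ when $\s=1/2$), and I would estimate it exactly as the operators $T$ and $S$ in the proof of Theorem~\ref{thm:main}: after the substitutions $f(y)=y^{-(2\a+1)/p}g(y)$ and $g(x)=F(x)x^{-1/p}$, the estimate on the scaling line $\tfrac1q=\tfrac1p-2\s$ becomes an $L^p(G)\to L^q(G)$ bound for convolution on $G=(\mathbb{R}_+,\tfrac{dx}{x})$ with kernel $K(u)\simeq\chi_{(1/2,2)}(u)|u-1|^{2\s-1}\in L^r(G)$ for $\tfrac1r>1-2\s$, settled by Young's inequality (Lemma~\ref{lem:Young}); the endpoints $(2\s,0)$ and $(1,1-2\s)$ fail by the explicit examples used there for $T$ and $S$. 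For $(\tfrac1p,\tfrac1q)$ below the scaling line one descends from the scaling-line target to $q$ using the finite measure of $(0,2)$. Thus $\mathcal B$ is bounded iff $\tfrac1q\ge\tfrac1p-2\s$ off $(2\s,0)$ and $(1,1-2\s)$; when $\s=1/2$ the kernel lies in every $L^r(G)$, $r<\infty$, and this degenerates to $(p,q)\ne(1,\infty)$.

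It remains to combine the two, and here the role of $\delta=((-1/2)\vee\a)+1$ is that $\tfrac{\s}{\delta}=\max\{\tfrac{\s}{\a+1},2\s\}$: for $\a\ge-1/2$ the constraint from $\mathcal A$ is binding, for $\a<-1/2$ the one from $\mathcal B$; in either case one checks that the endpoint exclusions of the non-binding operator fall outside the region cut out by the binding inequality (and, when $\s=\a+1$, that the exclusion $(1,\infty)$ from the logarithmic term is precisely $(1,1-\tfrac{\s}{\delta})$), so the conjunction is exactly $\tfrac1q\ge\tfrac1p-\tfrac{\s}{\delta}$ off $(\tfrac{\s}{\delta},0)$ and $(1,1-\tfrac{\s}{\delta})$. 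For necessity, the pointwise bounds $H_0^{\a,\s}\gtrsim A$ and $H_0^{\a,\s}\gtrsim B$ propagate the unboundedness of $\mathcal A$ or $\mathcal B$ to $J_0^{\a,\s}$; the pair $(1,1-\tfrac{\s}{\delta})$ is then ruled out by duality (the kernel is symmetric) against $(\tfrac{\s}{\delta},0)$, which fails because the relevant column is not uniformly in $L^{p'}(d\mu_\a)$, the obstruction being the logarithmic divergence at the origin if $\a\ge-1/2$ and at the diagonal if $\a<-1/2$. Equivalently, and more concisely, once it is observed that by Theorem~\ref{thm:ker_bes}(i) the kernel $H_0^{\a,\s}$ is the local part of the Riesz potential kernel for Laguerre function expansions of convolution type, the case $\a\ge-1/2$ is \cite[Lemma~4.1]{NoSt5} verbatim and the case $\a<-1/2$ (where $\delta=1/2$) is covered by the analogous analysis in \cite{NR}. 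The main obstacle I anticipate is precisely this endpoint bookkeeping --- tracking which model operator is binding and transferring Lemma~\ref{lem:Hardy} from $(0,\infty)$ to $(0,2)$, where the strict/non-strict dichotomy shifts and produces exactly the two excluded pairs --- together with establishing the comparison $H_0^{\a,\s}\simeq A+B$ uniformly across the thresholds $\s=1/2$ and $\s=\a+1$.
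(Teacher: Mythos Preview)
Your proposal is correct. The paper itself does not give a self-contained proof of this lemma: it simply observes, via Theorem~\ref{thm:ker_bes}(i), that the local kernel $H_0^{\a,\s}(x,y)$ behaves exactly like the Riesz potential kernel in the Laguerre convolution setting of \cite[Section~4.1]{NoSt5}, and then invokes \cite[Lemma~4.1]{NoSt5} and \cite{NR} without further argument. Your detailed sketch --- splitting into the origin part $A$ and the diagonal part $B$, handling $\mathcal A$ through Hardy/dual Hardy estimates on the bounded interval and $\mathcal B$ through the multiplicative-convolution argument from the proof of Theorem~\ref{thm:main}, then combining via $\s/\delta=\max\{\s/(\a+1),2\s\}$ --- is precisely an unpacking of what those references do, and you correctly note this equivalence at the end of your proposal. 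The only place to be careful is the endpoint bookkeeping you flag yourself (transferring Lemma~\ref{lem:Hardy} to $(0,2)$ so that the scaling equality relaxes to an inequality with exactly two excluded pairs), but this is routine and handled in \cite{NR}.
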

%%%%%%%%%%%%%%%%%%%%%%%%%%%%%%%%%%%%%%%%%
\begin{lem} \label{lem:bes_glob}
Let $\a > -1$, $\s>0$ and $1 \le p,q \le \infty$. 
Then $J^{\a,\s}_\infty$ is bounded from $L^p(d\mu_{\a})$ to $L^q(d\mu_{\a})$
if and only if $\a < -1/2$ and $p=q$ or $\a\ge -1/2$ and
$$
\frac{1}p - 2\s \le \frac{1}q \le \frac{1}p \qquad \textrm{and} \qquad
	\bigg(\frac{1}p,\frac{1}q\bigg) \notin \big\{(2\s,0), (1,1-2\s)\big\}.
$$
\end{lem}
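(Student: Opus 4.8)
The plan is to split $J^{\a,\s}_\infty$ into an off-diagonal piece that is $L^p$--$L^q$ bounded for \emph{all} $p,q$, and a near-diagonal piece that, after a change of the unknown, is essentially a convolution on $\R$; one then reads the result off from Young's inequality and the one-dimensional Hardy--Littlewood--Sobolev estimate. Since $H^{\a,\s}_\infty$ is supported in $\{x\vee y>2\}\subset\{x+y>1\}$, the bound of Theorem~\ref{thm:ker_bes}(ii) is available throughout its support. Write $H^{\a,\s}_\infty=H_{\mathrm{near}}+H_{\mathrm{off}}$, with $H_{\mathrm{near}}=\chi_{\{x/2<y<2x\}}H^{\a,\s}_\infty$ and $H_{\mathrm{off}}$ the complementary part, and denote the associated operators by $J_{\mathrm{near}},J_{\mathrm{off}}$. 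On the support of $H_{\mathrm{off}}$ one has $|x-y|\gtrsim x+y>2$, so, absorbing the (at most polynomial) factor $(x+y)^{-2\a-1}$ and the bounded factor $G(|x-y|)$ into the exponential, $H_{\mathrm{off}}(x,y)\lesssim e^{-c(x+y)}$. H\"older's inequality together with the fact that $e^{-c(\cdot)}\in L^r(d\mu_\a)$ for every $1\le r\le\infty$ (here one uses $\a>-1$) then gives $J_{\mathrm{off}}f(x)\lesssim e^{-cx}\|f\|_{L^p(d\mu_\a)}$, whence $\|J_{\mathrm{off}}f\|_{L^q(d\mu_\a)}\lesssim\|f\|_{L^p(d\mu_\a)}$ for all $1\le p,q\le\infty$ and all $\a>-1$. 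Consequently $J^{\a,\s}_\infty$ is $L^p$--$L^q$ bounded if and only if $J_{\mathrm{near}}$ is, and only the latter has to be analysed.

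On the support of $H_{\mathrm{near}}$ one has $x\simeq y\simeq x+y>1$, so $H_{\mathrm{near}}(x,y)\simeq\simeq x^{-2\a-1}e^{-c|x-y|}G(|x-y|)\chi_{\{x/2<y<2x\}}$, where $G(u)=u^{2\s-1}$, $1+\log^+(1/u)$, or $1$ according as $\s<1/2$, $\s=1/2$, or $\s>1/2$. Substituting $f(y)=y^{-(2\a+1)/p}F(y)$ recasts the desired estimate as the $L^p(dx)\to L^q(dx)$ estimate for the kernel $\widetilde{H}(x,y)=x^{(2\a+1)/q}y^{(2\a+1)/p'}H_{\mathrm{near}}(x,y)\simeq x^{(2\a+1)(1/q-1/p)}\kappa(x-y)\chi_{\{x/2<y<2x\}}$, where $\kappa(u)=e^{-c|u|}G(|u|)$. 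If $p=q$ (any $\a>-1$), or if $\a\ge-1/2$ and $1/q\le1/p$, the power $x^{(2\a+1)(1/q-1/p)}$ is bounded on $\{x>1\}$, so $\widetilde{H}(x,y)\lesssim\kappa(x-y)$ and the operator is majorised by convolution with $\kappa$ on $\R$. Since $\kappa\in L^1(\R)$, belongs to $L^r(\R)$ for the appropriate range of $r\in[1,\infty]$, and lies in the weak space $L^{1/(1-2\s),\infty}(\R)$ when $\s<1/2$, Young's inequality (\cite[Theorem~1.2.12]{G}) together with its weak-type version (\cite[Theorem~1.4.24]{G}, used on the critical line $1/q=1/p-2\s$) yields $L^p$--$L^q$ boundedness for every $(p,q)$ with $1/p-2\s\le1/q\le1/p$ and $(1/p,1/q)\notin\{(2\s,0),(1,1-2\s)\}$, the two excluded points being precisely where the one-dimensional Hardy--Littlewood--Sobolev estimate breaks down. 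This establishes all the ``if'' statements.

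For the converse, since $J^{\a,\s}_\infty f\ge J_{\mathrm{near}}f$ for $f\ge0$, it suffices to produce obstructions for $J_{\mathrm{near}}$. Testing with $f=\chi_{[M,2M]}$ and letting $M\to\infty$ one finds $J_{\mathrm{near}}f\gtrsim1$ on the interior of $[M,2M]$, so $\|J_{\mathrm{near}}f\|_{L^q(d\mu_\a)}\gtrsim M^{(2\a+2)/q}$ against $\|f\|_{L^p(d\mu_\a)}\simeq M^{(2\a+2)/p}$; as $2\a+2>0$, this forces $1/q\le1/p$. Testing with $f=\chi_{[M,M+1]}$ gives $\|J_{\mathrm{near}}f\|_{L^q(d\mu_\a)}\gtrsim M^{(2\a+1)/q}$ against $\|f\|_{L^p(d\mu_\a)}\simeq M^{(2\a+1)/p}$, whence $(2\a+1)(1/q-1/p)\le0$; for $\a<-1/2$ this reads $1/q\ge1/p$, and combined with the first bound it gives $p=q$, exactly the asserted constraint in that case. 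For $\a\ge-1/2$ the remaining conditions are local: on the fixed square $[2,3]^2$ one has $H^{\a,\s}(x,y)\simeq\kappa_0(|x-y|)$ and $d\mu_\a\simeq dx$ (up to positive constants), with $\kappa_0(u)=u^{2\s-1}$, $1+\log^+(1/u)$, or $1$ as before, so $L^p$--$L^q$ boundedness of $J_{\mathrm{near}}$ forces that of convolution with $\kappa_0$ on $[2,3]$; the standard counterexamples (as in the proof of Theorem~\ref{thm:main}) then show this requires $1/q\ge1/p-2\s$ and $(1/p,1/q)\notin\{(2\s,0),(1,1-2\s)\}$, with no further restriction when $\s>1/2$.

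The crux of the matter is the honest treatment of $J_{\mathrm{near}}$: after the substitution its kernel fails to be a convolution kernel, both because of the cut-off $\{x/2<y<2x\}$ and because of the factor $x^{(2\a+1)(1/q-1/p)}$, and it is precisely the sign of $2\a+1$ --- once $1/q\le1/p$ is known --- that decides whether that factor stays bounded. This is what separates $\a\ge-1/2$, where a genuine convolution/Hardy--Littlewood--Sobolev picture survives, from $\a<-1/2$, where boundedness can hold only when the offending power disappears, i.e.\ $p=q$. The other somewhat delicate ingredient is getting the endpoints right: the exclusions $(2\s,0)$ and $(1,1-2\s)$ and the need for the weak-type Young inequality on the critical line. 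For $\a=-1/2$ the lemma reduces to the classical one-dimensional Bessel-potential bounds, which could alternatively just be quoted.
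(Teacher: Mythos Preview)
Your proof is correct and takes a genuinely different route from the paper's. The paper first proves the auxiliary kernel-norm estimate $\|H^{\a,\s}_\infty(x,\cdot)\|_{L^r(d\mu_\a)}\simeq(1\vee x)^{(2\a+1)(1/r-1)}$ (Lemma~\ref{lem:ass}), then obtains $L^1\!\to\!L^q$ bounds by Minkowski's integral inequality, transfers them to $L^p\!\to\!L^\infty$ by duality, and fills in the interior by Riesz--Thorin; the critical line $\frac1q=\frac1p-2\s$ is handled indirectly by invoking the already established Riesz-potential bound (Theorem~\ref{thm:LpLq}~(ii)) and the pointwise domination $H^{\a,\s}_\infty\le K^{\a,\s}$. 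Your approach instead isolates the near-diagonal part $\{x/2<y<2x\}$, and after the substitution $f(y)=y^{-(2\a+1)/p}F(y)$ recognises it (up to the bounded factor $x^{(2\a+1)(1/q-1/p)}$ on $\{x>1\}$) as a Euclidean convolution with $\kappa(u)=e^{-c|u|}G(|u|)$; the full range then follows from Young and weak-Young, while the off-diagonal piece is disposed of by the crude bound $e^{-c(x+y)}$. Your argument is more self-contained (it does not rely on Theorem~\ref{thm:LpLq}) and makes the dichotomy $\a\gtrless-1/2$ completely transparent: it is precisely the sign of $2\a+1$ that decides whether $x^{(2\a+1)(1/q-1/p)}$ stays bounded once $q\ge p$. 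The paper's route, on the other hand, produces the kernel-norm asymptotics of Lemma~\ref{lem:ass} as a by-product, which has independent interest and is reusable. For the negative results both proofs use essentially the same scaling counterexamples ($\chi_{[M,2M]}$ and $\chi_{[M,M+1]}$) and the same local-singularity counterexamples near the diagonal; your reference to the proof of Theorem~\ref{thm:main} for the latter is apt and in fact more internal than the paper's own citation to \cite[Lemma~4.2]{NoSt5}.
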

%%%%%%%%%%%%%%%%%%%%%%%%%%%%%%%%%%%%%%%%%%%

In view of Theorem \ref{thm:ker_bes}, the behavior of $H^{\a,\s}_0(x,y)$ is exactly the same as the
behavior of the analogous kernel in the setting of Laguerre function expansions of convolution type
that was considered in \cite[Section 4.1]{NoSt5}.
Consequently, Lemma \ref{lem:bes_loc} coincides with \cite[Lemma 4.1]{NoSt5} and hence it follows from
the results of Nowak and Roncal \cite{NR}, as commented in \cite[Section 4.1]{NoSt5}.
To prove Lemma \ref{lem:bes_glob} we will need the following technical result. 
%%%%%%%%%%%%%%%%%%%%%%%%%%%%%%%%%%%%%%%%%
\begin{lem} \label{lem:ass}
Let $\a > -1$ and $\s>0$. Then the estimates
\begin{equation}  \label{aa}
\|H^{\a,\s}_\infty(x,\cdot)\|_{L^p(d\mu_{\a})}\simeq (1\vee x)^{(2\a+1)(1\slash p-1)}, \qquad x>0,
\end{equation}
hold for $1 \le p \le \infty$ when $\s>1/ 2$ and for $1 \le p < \frac{1}{1-2\s}$ when $\s\le 1/2$.

Moreover, for $\sigma\le 1/2$ and $\frac{1}{1-2\s} \le p \le \infty$, we have
\begin{equation}  \label{bb}
\|H^{\a,\s}_\infty(x,\cdot)\|_{L^p(d\mu_{\a})} = \infty, \qquad x > 4.
\end{equation}
\end{lem}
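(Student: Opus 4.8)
The plan is to exploit that on the support of $H^{\a,\s}_\infty(x,\cdot)$ one always has $x\vee y>2$, hence $x+y>2>1$, so the pointwise size of the kernel is governed entirely by Theorem~\ref{thm:ker_bes}(ii). Abbreviating $F_\s(r)=r^{2\s-1}$ when $\s<1/2$, $F_\s(r)=1+\log^+(1/r)$ when $\s=1/2$, and $F_\s(r)\equiv1$ when $\s>1/2$, that theorem reads $H^{\a,\s}(x,y)\simeq\simeq(x+y)^{-2\a-1}e^{-c|x-y|}F_\s(|x-y|)$ on this region. The decisive point is that the factor $e^{-c|x-y|}$ localizes the $L^p$ mass to $|x-y|\lesssim1$: after extracting the appropriate power coming from the factors $(x+y)^{-2\a-1}$ and $y^{2\a+1}$, the upper ``$\simeq\simeq$'' bound reduces to a multiple of the model integral $\int_{\mathbb R}e^{-c_2p|u|}F_\s(|u|)^p\,du$, which is finite exactly in the stated range of $p$ (for $\s\le1/2$ this asks $(2\s-1)p>-1$, i.e.\ $p<1/(1-2\s)$; for $\s>1/2$ there is no restriction), while the matching lower bounds can be obtained by integrating over a set on which $|x-y|\le1$, where both exponentials are $\simeq1$. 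In this way the two-sided exponential estimate collapses to an honest two-sided polynomial estimate for the norm.

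To prove \eqref{aa} I would split into $0<x\le2$ and $x>2$. If $0<x\le2$, the support of $H^{\a,\s}_\infty(x,\cdot)$ is $\{y>2\}$, where $x+y\simeq y$, so the $p$-th power of the norm is $\simeq\simeq\int_2^\infty y^{(2\a+1)(1-p)}e^{-cp(y-x)}F_\s(y-x)^p\,dy$. Here $e^{cpx}$ is bounded, the tail in $y$ is killed by exponential decay, and near $y=2$ one bounds the contribution by $\int_0^{3}F_\s(u)^p\,du<\infty$, using $p<1/(1-2\s)$ when $\s\le1/2$; this gives the upper bound $\lesssim1$. The lower bound $\gtrsim1$ follows by keeping only $y\in(4,5)$, where every factor is comparable to a positive constant uniformly in $x\in(0,2]$. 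Since $(1\vee x)^{(2\a+1)(1/p-1)}\simeq1$ there, \eqref{aa} holds for small $x$. If $x>2$, the indicator is $\equiv1$ and I would split $\int_0^\infty=\int_0^{x/2}+\int_{x/2}^{2x}+\int_{2x}^\infty$. On $(x/2,2x)$ one has $x+y\simeq x\simeq y$, so the change of variable $u=y-x$ turns the middle integral into $x^{(2\a+1)(1-p)}$ times $\int_{-x/2}^{x}e^{-cp|u|}F_\s(|u|)^p\,du$, which is $\simeq1$ (bounded above by the model integral and below by its part over $|u|\le1$). On $(0,x/2)$ and $(2x,\infty)$ one has $|x-y|\gtrsim x$, so these pieces carry a factor $e^{-cx}$; after absorbing the polynomial factors they are $O(x^{(2\a+1)(1-p)})$ and negligible next to the middle one. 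Hence the norm is $\simeq x^{(2\a+1)(1/p-1)}=(1\vee x)^{(2\a+1)(1/p-1)}$ for $x>2$; a direct lower bound is again obtained from $y\in(x,x+1)$, where $H^{\a,\s}(x,y)\gtrsim x^{-2\a-1}$. For $p=\infty$, which is relevant only when $\s>1/2$, one argues identically with integrals replaced by essential suprema.

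For \eqref{bb} assume $\s\le1/2$, $1/(1-2\s)\le p\le\infty$ and $x>4$, so $x>2$ and the indicator is $\equiv1$. Restrict the integration to $y\in(x-1,x+1)$ and use the lower ``$\simeq\simeq$'' bound, which there reads $H^{\a,\s}(x,y)\gtrsim x^{-2\a-1}F_\s(|x-y|)$. If $\s<1/2$ then $F_\s(|x-y|)^p=|x-y|^{(2\s-1)p}$ with $(2\s-1)p\le-1$, so $\int_{x-1}^{x+1}|x-y|^{(2\s-1)p}\,dy=\infty$; if $\s=1/2$ then necessarily $p=\infty$ and $F_\s(|x-y|)=1+\log^+(1/|x-y|)$ is unbounded as $y\to x$. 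In either case $\|H^{\a,\s}_\infty(x,\cdot)\|_{L^p(d\mu_\a)}=\infty$, as claimed. I expect the only genuinely delicate point to be the bookkeeping at the ``seam'' $y=2$ in the regime $0<x\le2$: when $\s<1/2$ and $x$ is close to $2$ the factor $(y-x)^{2\s-1}$ develops a mild singularity near $y=2$, and the hypothesis $p<1/(1-2\s)$ is precisely what keeps $\int_0^{3}u^{(2\s-1)p}\,du$ finite and the resulting bound uniform in $x$; everything else reduces routinely to the finiteness or divergence of the model integral $\int e^{-cp|u|}F_\s(|u|)^p\,du$, dictated by the same condition on $p$.
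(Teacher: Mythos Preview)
Your proof is correct and follows essentially the same approach as the paper: both use Theorem~\ref{thm:ker_bes}(ii) to reduce the norm to integrals governed by the model quantity $\int e^{-cp|u|}F_\s(|u|)^p\,du$, with the diagonal piece $|x-y|\lesssim x$ giving the main contribution and the off-diagonal pieces suppressed by the factor $e^{-cx}$. Your version is somewhat more self-contained---you split at $x=2$ and treat all $\s$ uniformly via $F_\s$, whereas the paper writes out only the case $\s<1/2$, restricts to $x<1$ and $x>4$, and cites \cite[Lemma~4.3]{NoSt5} to cover the intermediate range---but the underlying mechanism is identical.
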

%%%%%%%%%%%%%%%%%%%%%%%%%%%%%%%%%%%%%%%%%
Actually, only \eqref{aa} will be used in the sequel.  However, we include also \eqref{bb} to show that
\eqref{aa} is optimal in the sense of the ranges of admissible parameters.

\begin{proof}[{Proof of Lemma \ref{lem:ass}}]
By Theorem \ref{thm:ker_bes}, $H^{\a,\s}_\infty(x,y)$ satisfies the estimates of
Theorem \ref{thm:ker_bes} (ii) outside the square $0 < x,y \le 2$, and vanishes inside this square.
Therefore, it is convenient to consider separately the cases $\s<1\slash2$, $\s=1\slash2$ and $\s>1\slash2$.
In what follows we treat the case $\s<1\slash2$ leaving a similar analysis for the remaining cases to the
reader.
We also observe that considering $0< x< 1$ and $x>4$ is enough for the proof of \eqref{aa},
see the proof of \cite[Lemma 4.3]{NoSt5}.

Let $\s<1\slash2$. In view of Theorem \ref{thm:ker_bes} (ii),
$$
H_{\infty}^{\a,\s}(x,y) \simeq\simeq \chi_{\{x \vee y > 2\}} (x+y)^{-2\a-1}|x-y|^{2\s-1}
        \exp\big( -c|x-y|\big).
$$
Hence, if $0<x<1$, then
$$
\int_2^\infty H^{\a,\s}_\infty(x,y)^p\,y^{2\a+1}dy \simeq \simeq
\int_2^\infty y^{(2\a+1)(1-p)+(2\s-1)p}\exp\big(-cpy\big)\,dy \simeq 1
$$
for $p < \infty$, and
$$
\essup_{y>2} H^{\a,\s}_\infty(x,y)\simeq \simeq \sup_{y>2} y^{2\s-2\a-2}\exp(-cy) \simeq 1.
$$
Thus \eqref{aa} for $x<1$ follows.
If $x>4$, then for $p < \frac{1}{1-2\s}$ and for the decisive interval $(x\slash2,3x\slash2)$ we have
\begin{align*}
\int_{x\slash2}^{3x\slash2} H^{\a,\s}_\infty(x,y)^p\,y^{2\a+1}dy &
\simeq \simeq x^{(2\a+1)(1-p)}\int_{x\slash2}^{3x\slash2} \exp(-cp|x-y|)|x-y|^{(2\s-1)p}\,dy \\
&= 2  x^{(2\a+1)(1-p)}\int_{0}^{x\slash2} \exp(-cpu)u^{(2\s-1)p}\,du\\
&\simeq  x^{(2\a+1)(1-p)}.
\end{align*}
Notice that the assumption imposed on $p$ guarantees convergence of the last integral.
Checking that the relevant integrals over $(0,x/2)$ and $(3x/2,\infty)$ are controlled by
$x^{(2\a+1)(1-p)}$ is straightforward. Now \eqref{aa} follows.

If $\frac{1}{1-2\s} \le p < \infty$, then the above argument leads also to \eqref{bb}.
Finally, we have
$$
\|H^{\a,\s}_\infty(x,\cdot)\|_{\infty} \ge \essup_{x\slash2<y<3x\slash2} H^{\a,\s}_\infty(x,y) 
 \simeq \simeq x^{-2\a-1}\essup_{x\slash2<y<3x\slash2} \exp(-c|x-y|)|x-y|^{2\s-1}=\infty,
$$
which justifies \eqref{bb} for $p=\infty$.
\end{proof}

\begin{proof}[{Proof of Lemma \ref{lem:bes_glob}}]
The structure of the proof is as follows.
The upper estimate of Lemma~\ref{lem:ass} readily enables us to establish
$L^1-L^q$
boundedness of ${J}^{\a,\s}_\infty$ for the relevant $q$. 
This, together with a duality argument based on the symmetry of the kernel,
$H^{\a,\s}_\infty(x,y)=H^{\a,\s}_\infty(y,x)$, and the Riesz-Thorin interpolation theorem, gives
$L^p-L^q$ bounds for $p$ and $q$ satisfying $p=q$ in case $\a < -1/2$ or
$$
\frac{1}p-{2\s} < \frac{1}q \le \frac{1}p
$$
when $\a \ge -1/2$.
The case when $\a \ge -1/2$ and $\frac{1}q = \frac{1}p-2\s$ and $2\s < \frac{1}p < 1$ follows
from Theorem~\ref{thm:LpLq}~(ii) and the fact that $H_{\infty}^{\a,\s}(x,y)$ is dominated pointwise by
$K^{\a,\s}(x,y)$.
Finally, the lack of $L^p-L^q$ boundedness
for the relevant $p$ and $q$ will be shown by indicating explicit counterexamples.
To simplify the notation,
in what follows $\|\cdot\|_p$ denotes the norm in the Lebesgue space $L^p(\mathbb{R}_+,d\mu_{\a})$.

The $L^1-L^q$ boundedness of ${J}^{\a,\s}_\infty$ holds for
$$
q\in
    \begin{cases}
        [1,\infty], & \s>1/2,\\
        [1,\frac1{1-2\s}), & \s\le1/2,
    \end{cases}
    \qquad {\rm or} \qquad
    q = 1,
$$
when $\a\ge-1/2$ or $-1<\a<-1/2$, respectively. Indeed, by Minkowski's integral inequality
(naturally extended to the case $q=\infty$), we get
$$
\|{J}^{\a,\s}_\infty f\|_q\le \|f\|_1\big\|\|H^{\a,\s}_\infty(\cdot,y)\|_q\big\|_{\infty}
$$
and the assertion follows provided that $\big\|\|H^{\a,\s}_\infty(\cdot,y)\|_q\big\|_{\infty}<\infty$
(the outer norms are always taken with respect to the $y$ variable).
For $q=\infty$ this is the case if $\a \ge -1/2$ and $\s > 1/2$, since then, by Lemma~\ref{lem:ass},
$$
\big\|\|H^{\a,\s}_\infty(\cdot,y)\|_\infty\big\|_{\infty}
\lesssim \sup_{y>0}(1\vee y)^{-(2\a+1)}<\infty.
$$
On the other hand, for $1\le q<\infty$ in case $\s>1/2$, or for $1\le q<\frac{1}{1-2\s}$ in case $\s\le1/2$
(so that Lemma~\ref{lem:ass} can be applied),
$$
\big\|\|H^{\a,\s}_\infty(\cdot,y)\|_q\big\|_{\infty}
\lesssim \sup_{y>0}(1\vee y)^{(2\a+1)(1/q-1)}<\infty,
$$
provided that $(2\a+1)(\frac{1}q-1) \le 0$, 
and this happens if $q$ satisfies the imposed restrictions.

We now use the fact that, due to the symmetry of the kernel and a duality argument,
$L^p-L^q$ boundedness of $J^{\a,\s}_\infty$ for some $1 \le p,q< \infty$ implies
$L^{q'}-L^{p'}$ boundedness of $J^{\a,\s}_\infty$. This allows us to infer from the results
already obtained that ${J}^{\a,\s}_\infty$ is $L^p-L^{\infty}$ bounded provided that
$$
p\in
    \begin{cases}
        (1,\infty], & \s>1/2,\\
        (\frac1{2\s},\infty], & \s\le1/2,
    \end{cases}   
    \qquad {\rm or} \qquad
    p = \infty,
$$
when $\a\ge-1/2$ or $-1<\a<-1/2$, respectively. Applying the Riesz-Thorin interpolation theorem
we conclude $L^p-L^q$ boundedness of $J^{\a,\s}_{\infty}$ in all the relevant cases

Passing to the negative results, we must verify the following items.
\begin{itemize}
\item[(a)] $J^{\a,\s}_{\infty}$ is not $L^p-L^q$ bounded when $p > q$.
\item[(b)] $J^{\a,\s}_{\infty}$ is not $L^p-L^q$ bounded when $p < q$ and $\a < -1/2$.
\item[(c)] $J^{\a,\s}_{\infty}$ is not $L^p-L^q$ bounded when $\frac{1}q < \frac{1}p - 2\s$ and $\s< 1/2$
	and $\a \ge -1/2$.
\item[(d)] $J^{\a,\s}_{\infty}$ is not $L^p-L^q$ bounded for 
	$(\frac{1}p,\frac{1}q) \in \{(2\s,0),(1,1-2\s)\}$
	when $\s \le 1/2$ and $\a \ge -1/2$.
\end{itemize}

To show (a), consider first $p=\infty$. Then, by Lemma \ref{lem:ass},
$$
\| J^{\a,\s}_{\infty}\boldsymbol{1}\|_{q}^q \simeq \int_0^{\infty} d\mu_{\a}(x) = \infty,
$$
hence $J^{\a,\s}_{\infty}$ is not $L^{\infty}-L^q$ bounded.
To treat the case $p < \infty$, take $f(y) = \chi_{\{y > 2\}}y^{-\xi}$, where $\xi>0$ satisfies
$\frac{2(\a+1)}p < \xi \le \frac{2(\a+1)}{q}$. Then $f \in L^p(d\mu_{\a})$. We claim that
$J_{\infty}^{\a,\s}f \notin L^q(d\mu_{\a})$. Indeed, using the lower bound from 
Theorem \ref{thm:ker_bes} (ii), we obtain
\begin{align*}
J_{\infty}^{\a,\s}f(x) & \ge f(x) \int_{x/2}^x H_{\infty}^{\a,\s}(x,y)\, d\mu_{\a}(y)\\
	& \gtrsim f(x) \int_{x/2}^x \exp(-c(x-y))
	\left\{ 
	\begin{array}{ll}
		(x-y)^{2\s-1}, & \s < 1/2 \\
		1+\log^+\frac{1}{x-y}, & \s=1/2 \\
		1, & \s > 1/2
	\end{array}
	\right\} \, dy \\
	& \simeq f(x),
\end{align*}
where the last relation follows by the change of variable $x-y=u$. 
Since $f \notin L^q(d\mu_{\a})$, the claim follows.

To justify (b), consider $f_n = \chi_{(n,n+1)}$ with $n$ large. Then $\|f_n\|_{p} \simeq n^{(2\a+1)/p}$.
Moreover, by Theorem~\ref{thm:ker_bes}~(ii), for $x \in (n,n+1)$
$$
J_{\infty}^{\a,\s}f_n(x) \gtrsim \int_n^{n+1} \exp(-c(x-y))
	\left\{ 
	\begin{array}{ll}
		|x-y|^{2\s-1}, & \s < 1/2 \\
		1+\log^+\frac{1}{|x-y|}, & \s=1/2 \\
		1, & \s > 1/2
	\end{array}
	\right\} \, dy
\simeq 1.
$$
This implies that
$$
\|J_{\infty}^{\a,\s}f_n\|_{q} \gtrsim \bigg( \int_{n}^{n+1} d\mu_{\a}(x)\bigg)^{1/q} \simeq n^{(2\a+1)/q}.
$$ 
Now, if $J_{\infty}^{\a,\s}$ were $L^p-L^q$ bounded, then we would have
$$
n^{(2\a+1)/q} \lesssim \|J_{\infty}^{\a,\s}f_n\|_{q} \lesssim \|f_n\|_p \simeq n^{(2\a+1)/p}, \qquad
	n \to \infty.
$$
But this is not possible since $2\a+1< 0$ and $p < q$.

Items (c) and (d) are proved by means of Theorem \ref{thm:ker_bes} (ii) and the counterexamples
presented in connection with items (b) and (c) in the proof of \cite[Lemma 4.2]{NoSt5}. 
We leave further details to
interested readers.
\end{proof}

\subsection{$\boldsymbol{L^p-L^q}$ estimates in the setting of the non-modified Hankel transform}
Let us first check that
Theorem \ref{thm:main_H} follows in a straightforward manner from Theorem \ref{thm:main}.
\begin{proof}[{Proof of Theorem \ref{thm:main_H}}]
By means of \eqref{dd} we see that 
$$
L^p(x^{ap}dx) \subset \domain \mathcal{I}^{\a,\s} \quad \textrm{if and only if} \quad
L^p\big(x^{[a+\a+1/2-(2\a+1)/p]p}d\mu_{\a}\big)\subset \domain I^{\a,\s}.
$$
Thus (i) follows from Theorem \ref{thm:main} (i).

To show (ii), notice that, in view of \eqref{pot_m}, the estimate in question is equivalent to the bound
$$
\big\| x^{-b+\a+1/2-(2\a+1)/q} I^{\a,\s}g\|_{L^q(d\mu_{\a})}
    \lesssim \big\|x^{a+\a+1/2-(2\a+1)/p}g\|_{L^p(d\mu_{\a})}
$$
for all $g\in L^p\big(x^{[a+\a+1/2-(2\a+1)/p]p}d\mu_{\a}\big)$. 
This combined with
Theorem \ref{thm:main} (ii) (with (e) replaced by (e')) gives the desired conclusion.
\end{proof}

Similarly to Theorem~\ref{thm:LpLq_bes}, Theorem~\ref{thm:LpLq_besN}
follows readily from the two lemmas, stated below, describing $L^p-L^q$ behavior
of two auxiliary operators with non-negative kernels which $\mathcal J^{\a,\s}$ splits into. More precisely,
we split the operator $\mathcal J^{\a,\s}$ according to the kernel splitting
$$
\mathcal H^{\a,\s}(x,y) = 
	\chi_{\{x \le 2, y \le 2\}} \mathcal H^{\a,\s}(x,y) + \chi_{\{x \vee y > 2\}}\mathcal H^{\a,\s}(x,y) 
    \equiv \mathcal H^{\a,\s}_{0}(x,y) + \mathcal H^{\a,\s}_{\infty}(x,y)
$$
and denote the resulting integral operators by $\mathcal J^{\a,\s}_{0}$ 
and $\mathcal J^{\a,\s}_{\infty}$, respectively.

%%%%%%%%%%%%%%%%%%%%%%%%%%%%%%%%%%%%%%%%
\begin{lem}[{\cite[Lemma 4.4]{NoSt5} and \cite{NR}}] \label{lem:bes_loc_H}
Let $\a > -1$, $\s>0$ and $1 \le p,q \le \infty$.  
\begin{itemize}
\item[(a)] If $\a \ge -1/2$, then $\mathcal J^{\a,\s}_{0}$ 
is bounded from $L^p(dx)$ to $L^q(dx)$ if and only if
$$
\frac{1}p - 2\s\le \frac{1}q 
\quad \textrm{and} \quad	\bigg(\frac{1}p,\frac{1}q\bigg) \notin \big\{(2\s,0), (1,1-2\s)\big\}.
$$
\item[(b)] Let $\a < -1/2$. Then $L^p(dx)\subset \domain \mathcal J_{0}^{\a,\s}$ if and only if
$p > 2/(2\a+3)$. In this case
$\mathcal J^{\a,\s}_{0}$ is bounded from $L^p(dx)$ to $L^q(dx)$ if and only if
$$
\frac{1}p - 2\s \le \frac{1}q  \quad  \textrm{and} \quad \frac{1}q > -\a-\frac{1}2.
$$
\end{itemize}
\end{lem}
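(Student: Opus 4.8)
The plan is to reduce Lemma~\ref{lem:bes_loc_H} to the modified–Hankel machinery and, ultimately, to the Laguerre-type estimates of \cite{NoSt5,NR}. First I would exploit the kernel identity \eqref{linkKH}, which on the square $\{x\le 2,\,y\le 2\}$ reads $\mathcal H^{\a,\s}_0(x,y)=(xy)^{\a+1/2}H^{\a,\s}_0(x,y)$; equivalently $\mathcal J^{\a,\s}_0 f(x)=x^{\a+1/2}J^{\a,\s}_0\big((\cdot)^{-\a-1/2}f\big)(x)$, the Bessel-potential analogue of \eqref{pot_m}. Using $\|u\|_{L^r(dx)}=\|x^{-(2\a+1)/r}u\|_{L^r(d\mu_\a)}$ and substituting $f=x^{\a+1/2}g$, the bound $\|\mathcal J^{\a,\s}_0 f\|_{L^q(dx)}\lesssim\|f\|_{L^p(dx)}$ turns into the two-power-weight estimate $\|x^{-b}J^{\a,\s}_0 g\|_{L^q(d\mu_\a)}\lesssim\|x^{a}g\|_{L^p(d\mu_\a)}$ with $a=\a+\tfrac12-\tfrac{2\a+1}{p}$ and $b=\tfrac{2\a+1}{q}-\a-\tfrac12$, and likewise $L^p(dx)\subset\domain\mathcal J^{\a,\s}_0$ becomes $L^p(x^{ap}d\mu_\a)\subset\domain J^{\a,\s}_0$. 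By Theorem~\ref{thm:ker_bes}, on that square $H^{\a,\s}_0(x,y)$ — hence $\mathcal H^{\a,\s}_0(x,y)$ up to the explicit factor $(xy)^{\a+1/2}$ — coincides, up to multiplicative constants, with the local Bessel-potential kernel of the Laguerre function expansions of Hermite type treated in \cite[Section~4.2]{NoSt5}; thus the statement to be proved is exactly \cite[Lemma~4.4]{NoSt5}, whose proof is the weighted $L^p$--$L^q$ analysis of \cite{NR}. So the shortest route is to quote that, after checking that the exponents $a,b$ produced above match the ones occurring there.

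Conceptually, I would explain the conditions as follows: $\tfrac1q\ge\tfrac1p-2\s$ together with the exclusion of the endpoints $(\tfrac1p,\tfrac1q)\in\{(2\s,0),(1,1-2\s)\}$ are the Hardy--Littlewood--Sobolev-type restrictions caused by the diagonal singularity $|x-y|^{2\s-1}$ against the homogeneous dimension $1$ of $(\mathbb R_+,dx)$, whereas the extra requirements $p>\tfrac2{2\a+3}$ and $\tfrac1q>-\a-\tfrac12$ that appear only for $\a<-1/2$ come solely from the integrability at the origin of the factor $(xy)^{\a+1/2}$. If a self-contained argument is preferred, sufficiency would be obtained by splitting the square into the diagonal strip $\{x/2<y<2x\}$, where $\mathcal H^{\a,\s}_0(x,y)\simeq(xy)^{\a+1/2}$ times $|x-y|^{2\s-1}$ (if $\s<1/2$), $\log\frac{2(x+y)}{|x-y|}$ (if $\s=1/2$) or $1$ (if $\s>1/2$), and, after the substitutions used for $T$ and $S$ in the proof of Theorem~\ref{thm:main}, this becomes a convolution on $(\mathbb R_+,\tfrac{dx}{x})$ with an $L^r$ kernel, handled by Young's inequality (Lemma~\ref{lem:Young}); and the off-diagonal part, where (Case~2 of Theorem~\ref{thm:ker_bes}(i)) $\mathcal H^{\a,\s}_0(x,y)\simeq(xy)^{\a+1/2}(x+y)^{2\s-2\a-2}$ up to the lower-order terms $\chi_{\{\s>\a+1\}}$ and $\chi_{\{\s=\a+1\}}\log\frac1{x+y}$, so the operator is dominated by Hardy-type operators estimated exactly as $H_0$ and $H_\infty$ in the proof of Theorem~\ref{thm:main}.

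For necessity I would localise near the origin: with $f=\chi_{(1/4,1/2)}$ one has $\mathcal J^{\a,\s}_0 f(x)\gtrsim x^{\a+1/2}$ for $0<x<1/8$, which forces $\tfrac1q>-\a-\tfrac12$; with $f(y)=\chi_{(0,1)}(y)\,y^{-\xi}$ and $\xi$ chosen so that $f\in L^p(dx)$ while $\int_0 y^{\a+1/2-\xi}\,dy=\infty$, one gets the domain restriction $p>\tfrac2{2\a+3}$ (a logarithmic refinement handling the borderline $p=\tfrac2{2\a+3}$); and the sharpness of $\tfrac1q\ge\tfrac1p-2\s$ and the corner exclusions follow from the counterexamples used in the proof of Theorem~\ref{thm:main} (analysis of $T$ and $S$) and in \cite{NoSt5,NR}. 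The one genuinely delicate point, and what I expect to be the main obstacle, is bookkeeping: correctly importing the weighted version of Lemma~\ref{lem:bes_loc} (not stated in the present excerpt) and tracking which of the two weights $x^{a}$, $x^{-b}$ is responsible for each borderline inequality; once the exponents are matched, the rest is a routine repetition of arguments already in the paper.
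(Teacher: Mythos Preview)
Your proposal is correct and follows essentially the same route as the paper. The paper's entire argument is the one-line observation that, by Theorem~\ref{thm:ker_bes} and \eqref{linkKH}, the kernel $\mathcal H^{\a,\s}_0(x,y)$ behaves exactly like its Laguerre--Hermite analogue from \cite[Section~4.2]{NoSt5}, so Lemma~\ref{lem:bes_loc_H} \emph{is} \cite[Lemma~4.4]{NoSt5} (proved via \cite{NR}); your intertwining computation with $J^{\a,\s}_0$ and the explicit weights $a,b$, the sketched self-contained splitting, and the counterexamples are all extra detail not present in the paper but consistent with it.
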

%%%%%%%%%%%%%%%%%%%%%%%%%%%%%%%%%%%%%%%%%%%%%%%
\begin{lem} \label{lem:bes_glob_H}
Let $\a > -1$, $\s>0$ and $1 \le p,q \le \infty$.  Then  $\mathcal J^{\a,\s}_{\infty}$ 
satisfies the positive $L^p-L^q$ mapping properties stated in 
Theorem \ref{thm:LpLq_besN} for $\mathcal J^{\a,\s}$, but is not 
bounded from $L^p(dx)$ to $L^q(dx)$ when $p > q$. 
\end{lem}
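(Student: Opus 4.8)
The plan is to imitate the proof of Lemma~\ref{lem:bes_glob}, using the identity \eqref{linkKH}, namely $\mathcal H^{\a,\s}_{\infty}(x,y)=(xy)^{\a+1/2}H^{\a,\s}_{\infty}(x,y)$, together with Theorem~\ref{thm:ker_bes}(ii), which applies on $\{x\vee y>2\}$ since there $x+y>1$. First I would record the pointwise behaviour of $\mathcal H^{\a,\s}_{\infty}$, separating comparable arguments $x/2<y<2x$ from non-comparable ones. On the comparable region $(xy)^{\a+1/2}(x+y)^{-2\a-1}\simeq1$ and $x,y>1$, so $\mathcal H^{\a,\s}_{\infty}(x,y)\simeq\simeq k(x-y)$, where $k$ is the classical one-dimensional Bessel profile $k(u)=\exp(-c|u|)\{|u|^{2\s-1}\ (\s<1/2),\ 1+\log^{+}\tfrac1{|u|}\ (\s=1/2),\ 1\ (\s>1/2)\}$; note that $k\in L^{r}(\mathbb R,du)$ exactly for $1\le r<\tfrac1{1-2\s}$ when $\s<1/2$, for $1\le r<\infty$ when $\s=1/2$, and for $1\le r\le\infty$ when $\s>1/2$. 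On the non-comparable region, say $y<x/2$ (which forces $x>2$), one has $\mathcal H^{\a,\s}_{\infty}(x,y)\lesssim y^{\a+1/2}\exp(-c'x)$, and symmetrically when $x<y/2$.

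For the positive results when $\a\ge-1/2$ I would invoke the pointwise bound $\mathcal W_t^{\a}(x,y)\le C_{\a}W_t(x-y)$ recalled earlier, which upgrades the comparable-region estimate to $\mathcal H^{\a,\s}_{\infty}(x,y)\lesssim k(x-y)$ for all $x,y>0$; hence $0\le\mathcal J^{\a,\s}_{\infty}f\lesssim k*|f|$ with Euclidean convolution on $\mathbb R$ ($f$ extended by zero). Young's inequality then gives $L^{p}(dx)\to L^{q}(dx)$ boundedness whenever $\max(0,\tfrac1p-2\s)<\tfrac1q\le\tfrac1p$, and the remaining Hardy--Littlewood--Sobolev endpoint $\tfrac1q=\tfrac1p-2\s$ (relevant only when $\s<1/2$, and then only on its open part $1<p,q<\infty$) follows from $\mathcal H^{\a,\s}_{\infty}\le\mathcal K^{\a,\s}$ and Corollary~\ref{cor:LpLq_H}(i). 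This reproduces precisely the positive range of Theorem~\ref{thm:LpLq_besN}(a).

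The case $\a<-1/2$ is the crux, since now $\mathcal W_t^{\a}\le C_{\a}W_t(x-y)$ fails, and I would split $\mathcal J^{\a,\s}_{\infty}$ along the comparable/non-comparable decomposition. The comparable part is still dominated by $k(x-y)$, so Young's inequality handles $\max(0,\tfrac1p-2\s)<\tfrac1q\le\tfrac1p$, while the endpoint $\tfrac1q=\tfrac1p-2\s$ comes from $\le\mathcal K^{\a,\s}$ and Corollary~\ref{cor:LpLq_H}(ii), whose hypotheses $\tfrac1{p'}>-\a-\tfrac12$ (equivalently $p>\tfrac2{2\a+3}$) and $\tfrac1q>-\a-\tfrac12$ are exactly those in force. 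For the non-comparable piece supported on $y<x/2$, Hölder's inequality in $y$ together with the fact that $y\mapsto y^{\a+1/2}$ is locally in $L^{p'}$ (which is precisely $p>\tfrac2{2\a+3}$) yields $|\mathcal J_{1}f(x)|\lesssim(1\vee x)^{\a+3/2-1/p}e^{-c'x}\|f\|_{L^{p}(dx)}$, a bound with finite $L^{q}(dx)$ norm for every $q$; for the symmetric piece on $x<y/2$, Hölder gives $|\mathcal J_{2}f(x)|\lesssim x^{\a+1/2}e^{-c''x}\|f\|_{L^{p}(dx)}$, and $x^{\a+1/2}e^{-c''x}\in L^{q}(dx)$ iff $\tfrac1q>-\a-\tfrac12$. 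Assembling the three pieces gives boundedness on the full range of Theorem~\ref{thm:LpLq_besN}(b).

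Finally, the failure of boundedness when $p>q$ follows by transcribing the counterexamples from the proof of Lemma~\ref{lem:bes_glob}(a). When $p=\infty$, testing on $\chi_{(2,\infty)}$ and using the comparable-region lower bound $\mathcal H^{\a,\s}_{\infty}(x,y)\gtrsim k'(x-y)$ for large comparable $x,y$ gives $\mathcal J^{\a,\s}_{\infty}\chi_{(2,\infty)}(x)\gtrsim1$ for large $x$, so the image is not in $L^{q}(dx)$ for $q<\infty$. When $p<\infty$, testing on $f(y)=\chi_{\{y>N\}}y^{-\xi}$ with $\tfrac1p<\xi\le\tfrac1q$ (available because $p>q$), the same lower bound yields $\mathcal J^{\a,\s}_{\infty}f(x)\gtrsim f(x)$ for large $x$, whence $f\in L^{p}(dx)\setminus L^{q}(dx)$ forces $\mathcal J^{\a,\s}_{\infty}f\notin L^{q}(dx)$. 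I expect the main obstacle to be the positive direction for $\a<-1/2$: keeping track of how the three constraints $p\le q$, $p>\tfrac2{2\a+3}$ and $\tfrac1q>-\a-\tfrac12$ emerge one from each kernel piece, and covering the endpoint $\tfrac1q=\tfrac1p-2\s$ in that regime via Corollary~\ref{cor:LpLq_H}(ii).
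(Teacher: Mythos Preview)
Your proposal is correct and follows essentially the same strategy as the paper's proof: compare the comparable-argument part of $\mathcal H^{\a,\s}_\infty$ with the classical one-dimensional Bessel profile (the paper does this via $H^{-1/2,\s}_\infty$ and Lemma~\ref{lem:bes_glob}, you via the heat-kernel majorization $\mathcal W_t^{\a}\le C_\a W_t$ and Young directly), handle the off-diagonal pieces through the bound $\mathcal H^{\a,\s}_\infty(x,y)\lesssim(x\wedge y)^{\a+1/2}e^{-c(x\vee y)}$ and H\"older, cover the endpoint $\tfrac1q=\tfrac1p-2\s$ by $\mathcal H^{\a,\s}_\infty\le\mathcal K^{\a,\s}$ together with Corollary~\ref{cor:LpLq_H}, and disprove $p>q$ with the same counterexamples. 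The only minor variation is that for $\a<-1/2$ the paper shows the off-diagonal operators $U_1,U_2$ are merely $L^p$-bounded and then interpolates with Corollary~\ref{cor:LpLq_H}(ii) to fill the triangle, whereas you estimate them from $L^p$ into $L^q$ directly for all admissible $q$; this is slightly more economical but not a genuinely different route.
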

%%%%%%%%%%%%%%%%%%%%%%%%%%%%%%%%%%%%%%%%%%%%%%

By Theorem \ref{thm:ker_bes} and \eqref{linkKH}, the kernel $\mathcal{H}_0^{\a,\s}(x,y)$ behaves precisely
in the same way as its analogue in the setting of Laguerre function expansions of Hermite type studied
in \cite[Section 4.2]{NoSt5}. Thus Lemma~\ref{lem:bes_loc_H} coincides with \cite[Lemma 4.4]{NoSt5}.
The latter is a consequence of the results in \cite{NR}, 
see the related comments in \cite[Section 4.2]{NoSt5}.
To prove Lemma \ref{lem:bes_glob_H} we will mostly appeal to the results obtained in the modified 
Hankel transform setting. Essentially, only the case $\a < -1/2$ requires new arguments.

\begin{proof}[{Proof of Lemma \ref{lem:bes_glob_H}}]
In view of \eqref{linkKH} and Theorem \ref{thm:ker_bes} (ii), for $\a \ge -1/2$ the kernel
$\mathcal H_{\infty}^{\a,\s}(x,y)$ is controlled by $H_{\infty}^{-1/2,\s}(x,y)$. 
Thus $\mathcal J_{\infty}^{\a,\s}$
inherits the $L^p-L^q$ boundedness of $J_{\infty}^{-1/2,\s}$ 
(note that $d\mu_{-1/2}$ is the Lebesgue measure).
This together with Lemma \ref{lem:bes_glob} gives the positive results of the lemma in case $\a \ge -1/2$.

Next observe that for any $\a > -1$, the two above mentioned kernels are comparable in the sense
of $\simeq \simeq$ if the arguments are,
see Theorem \ref{thm:ker_bes} (ii),
\begin{equation} \label{krel}
\mathcal H_{\infty}^{\a,\s}(x,y) \simeq \simeq H_{\infty}^{-1/2,\s}(x,y), \qquad x/2 < y < 2x.
\end{equation}
So to prove the required negative result we can use the counterexamples from (a)
of the proof of Lemma \ref{lem:bes_glob}, since they involve only comparable arguments of the kernel
(in connection with the case $p=\infty$, notice that 
$\mathcal{J}^{\a,\s}\boldsymbol{1}(x) \gtrsim 1$, $x > 4$, 
by \eqref{krel} and the proof of Lemma \ref{lem:ass}).

It remains to justify the $L^p-L^q$ boundedness in case $\a < -1/2$. Because of \eqref{krel} and
Lemma~\ref{lem:bes_glob} (or rather its proof that depends on $J^{\a,\s}_{\infty}$ only through
qualitatively sharp estimates of the kernel $H_{\infty}^{\a,\s}(x,y)$), 
it is enough to study the mutually dual integral operators
\begin{align*}
U_1^{\a,\s}f(x) & = \int_0^{x/2} \mathcal H_{\infty}^{\a,\s}(x,y)f(y)\, dy,\\
U_2^{\a,\s}f(x) & = \int_{2x}^{\infty} \mathcal H_{\infty}^{\a,\s}(x,y)f(y)\, dy.
\end{align*}
We will show that $U_1^{\a,\s}$ and $U_2^{\a,\s}$ are $L^p$-bounded for $p$ satisfying
$$
-\a-\frac{1}2 < \frac{1}p < \a + \frac{3}2.
$$
This will finish the proof, because of Corollary \ref{cor:LpLq_H} (ii) (recall that $\mathcal{K}^{\a,\s}(x,y)$
dominates $\mathcal{H}^{\a,\s}(x,y)$ pointwise) and an interpolation argument.

By \eqref{linkKH}, Theorem \ref{thm:ker_bes} (ii) and H\"older's inequality,
$$
|U_1^{\a,\s}f(x)| \lesssim x^{-\a-1/2} \exp\big(-cx\big) 
	\left\{ 
	\begin{array}{ll}
		x^{2\s-1}, & \s < 1/2 \\
		1+\log^+\frac{1}{x}, & \s=1/2 \\
		1, & \s > 1/2
	\end{array}
	\right\}
	 \big\|\chi_{\{y < x\}} y^{\a+1/2}\big\|_{p'} \|f\|_p.
$$
Since $p>\frac2{2\a+3}$, the $L^{p'}$ norm here is finite and comparable to $x^{\a+3/2-1/p}$. Then we get
$$
|U_1^{\a,\s}f(x)| \lesssim g(x) \|f\|_p, \qquad x >0,
$$
where
$$
g(x) = x^{1-1/p} \exp\big(-cx\big) 
	\begin{cases}
		x^{2\s-1}, & \s < 1/2, \\
		1+\log^+\frac{1}{x}, & \s=1/2, \\
		1, & \s > 1/2.
	\end{cases}
$$
Since $g \in L^p$, we see that $U_1^{\a,\s}$ is $L^p$-bounded.

Considering $U_{2}^{\a,\s}$, we recall that it is the dual of $U_1^{\a,\s}$ and use the already proved
result for $U_{1}^{\a,\s}$. 
\end{proof}

Finally, for the sake of completeness and, perhaps, reader's curiosity, we formulate an analogue of
Lemma \ref{lem:ass}. The proof is very similar to that of Lemma \ref{lem:ass}.
%%%%%%%%%%%%%%%%%%%%%%%%%%%%%%%%%%%%%%%%%%%%%%
\begin{propo} 
Let $\a > -1$, $\s>0$ and $1 \le p \le \infty$. Then the estimates
$$
\|\mathcal H^{\a,\s}_{\infty}(x,\cdot)\|_p\simeq
\begin{cases}
        x^{\a+1/2}, & x\le1,\\
        1, & x>1,
    \end{cases}
$$
hold provided that $p$ satisfies $\frac{1}p > 1-2\s$ and, in addition, $\frac{1}p > - \a - 1/2$ in case
$\a < -1/2$. 

Moreover, for the remaining $p$ we have
\begin{equation*}  
\|\mathcal H^{\a,\s}_{ \infty}(x,\cdot)\|_p=\infty, \qquad x>4.
\end{equation*}
\end{propo}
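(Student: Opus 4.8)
The plan is to transcribe the proof of Lemma~\ref{lem:ass} to the non-modified setting, using the identity $\mathcal H^{\a,\s}_{\infty}(x,y)=\chi_{\{x\vee y>2\}}(xy)^{\a+1/2}H^{\a,\s}(x,y)$ from \eqref{linkKH} together with the qualitatively sharp bounds for $H^{\a,\s}$ of Theorem~\ref{thm:ker_bes}~(ii). As in the proof of Lemma~\ref{lem:ass} (see also the proof of \cite[Lemma~4.3]{NoSt5}), it is enough to treat the two ranges $0<x\le1$ and $x>4$; the intermediate range $1<x\le4$, where every relevant quantity is comparable to a positive constant, reduces to these in a routine way. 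All norms below are taken in $L^p(\mathbb R_+,dy)$.

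For $0<x\le1$ the kernel vanishes when $y\le2$, and for $y>2$ one has $x+y\simeq y$ and $|x-y|\simeq y$, so Theorem~\ref{thm:ker_bes}~(ii) yields
$$
\mathcal H^{\a,\s}_{\infty}(x,y)\simeq\simeq x^{\a+1/2}\,y^{-\a-1/2}\exp(-cy)
	\begin{cases} y^{2\s-1}, & \s<1/2,\\ 1, & \s\ge1/2\end{cases}
$$
(the logarithmic factor at $\s=1/2$ being absent since $y>2$). Thanks to the exponential decay, the $L^p$ norm over $(2,\infty)$ is finite and comparable to a positive constant for every $1\le p\le\infty$, hence $\|\mathcal H^{\a,\s}_{\infty}(x,\cdot)\|_p\simeq x^{\a+1/2}$ with no restriction on $p$.

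For $x>4$ I would split the $y$-integration into $(0,x/2)$, $(x/2,3x/2)$ and $(3x/2,\infty)$. On the far region $y\simeq x+y\simeq|x-y|$, so the integrand carries a factor $\exp(-cy)$ and the contribution of this region is $\lesssim1$ (indeed decaying in $x$). On the region near the origin $x-y\simeq x+y\simeq x$, so for fixed $x>4$ the kernel is comparable to a positive constant times $y^{\a+1/2}$; the integral $\int_0^{x/2}y^{(\a+1/2)p}\,dy$ is finite iff $(\a+1/2)p>-1$, i.e.\ automatically for $\a\ge-1/2$ and iff $\frac1p>-\a-1/2$ for $\a<-1/2$ (with the evident reading at $p=\infty$), and when finite its value is $\lesssim\exp(-cx)$. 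On the diagonal region $x\simeq y\simeq x+y$, so $(xy)^{\a+1/2}(x+y)^{-2\a-1}\simeq1$ and, after the substitution $u=|x-y|$, the contribution becomes
$$
\int_0^{x/2}\exp(-cpu)
	\begin{cases} u^{(2\s-1)p}, & \s<1/2,\\ \big(1+\log^+\tfrac1u\big)^p, & \s=1/2,\\ 1, & \s>1/2\end{cases}\,du,
$$
which is finite and comparable to $1$ precisely when $\frac1p>1-2\s$ (for $p=\infty$ the supremum over $u$ is finite iff $\s>1/2$), and equals $+\infty$ otherwise.

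Collecting the three pieces gives the assertion: when $\frac1p>1-2\s$ and, for $\a<-1/2$, also $\frac1p>-\a-1/2$, the diagonal piece is $\simeq1$ and dominates, so $\|\mathcal H^{\a,\s}_{\infty}(x,\cdot)\|_p\simeq1$ for $x>4$, while if either condition fails the corresponding piece is infinite and hence so is the norm; combined with the case $x\le1$ this proves both statements. The only mildly delicate point is bookkeeping the two independent restrictions on $p$ --- integrability of $u^{(2\s-1)p}$ near the diagonal, giving $\frac1p>1-2\s$, and integrability of $y^{(\a+1/2)p}$ near $y=0$, giving $\frac1p>-\a-1/2$ when $\a<-1/2$ --- together with reading all bounds correctly at the endpoint $p=\infty$; apart from this, the argument merely repeats that of Lemma~\ref{lem:ass}.
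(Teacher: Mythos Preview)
Your proof is correct and follows exactly the approach the paper indicates: the paper's proof consists of the single sentence ``The proof is very similar to that of Lemma~\ref{lem:ass},'' and you have carried out precisely that transcription, using the relation \eqref{linkKH} to reduce to the bounds of Theorem~\ref{thm:ker_bes}~(ii) and then repeating the case analysis of Lemma~\ref{lem:ass}. The additional integrability condition $\frac1p>-\a-\frac12$ for $\a<-1/2$, arising from the factor $y^{\a+1/2}$ near the origin, is the only new feature compared with Lemma~\ref{lem:ass}, and you identify it correctly.
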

%%%%%%%%%%%%%%%%%%%%%%%%%%%%%%%%%%%%%%%%%%%%%%%%%

\subsection{$\boldsymbol{L^p-L^q}$ estimates in the setting of the Hankel-Dunkl transform}

We will argue along the lines of the proof of \cite[Theorem 2.6]{NoSt5}. 
The following notation will be useful. For a function $f$ on $\mathbb{R}$, define $f_+$ and $f_{-}$
as functions on $\mathbb{R}_+$ given by $f_{\pm}(x) = f(\pm x)$, $x>0$. In a similar way, let
$\mathbb{K}^{\a,\s}_{+}$ and $\mathbb{K}^{\a,\s}_{-}$ be the kernels on $\mathbb{R}_+\times \mathbb{R}_+$
determined by $\mathbb{K}_{\pm}^{\a,\s}(x,y)=\mathbb{K}^{\a,\s}(x,\pm y)$, $x,y > 0$. 
Denote the corresponding integral operators related to the measure space $(\mathbb{R}_+,d\mu_{\a})$ by
$\mathbb{I}_{\pm}^{\a,\s}$, respectively.

It is clear that for any fixed $1\le p \le \infty$,
$$
\|f\|_{L^p(dw_{\a})} \simeq \|f_+\|_{L^p(d\mu_{\a})} + \|f_{-}\|_{L^p(d\mu_{\a})}.
$$
Further, by the symmetry of the kernel, $\mathbb{K}^{\a,\s}(-x,y)=\mathbb{K}^{\a,\s}(x,-y)$, 
and the symmetry of $w_{\a}$,
\begin{equation} \label{dec2}
\big(\mathbb I^{\a,\s}f\big)_\pm=\mathbb{I}^{\a,\s}_+(f_\pm)+ \mathbb{I}^{\a,\s}_-(f_\mp).
\end{equation}

\begin{proof}[{Proof of Theorem \ref{thm:main_D}}]
We first prove the sufficiency in (i). Take $a$ satisfying the relevant condition and assume that
$f \in L^p(|x|^{ap}dw_{\a})$. 
Then 
$f_{\pm} \in L^p(x^{ap}d\mu_{\a})$. 
Since, in view of \eqref{kkp1},
the kernels $\mathbb{K}_{\pm}^{\a,\s}(x,y)$ are controlled by ${K}^{\a,\s}(x,y)$, from 
Theorem \ref{thm:main} (i) it follows that $f_+,f_- \in \domain \mathbb{I}^{\a,\s}_{\pm}$.
Now \eqref{dec2} gives the desired conclusion.

To show the necessity in (i), consider $f$ such that $f_-\equiv 0$. Then 
$(\mathbb{I}^{\a,\s}f)_+ = \mathbb{I}^{\a,\s}_+(f_+)$. We see that if $f \in \domain \mathbb{I}^{\a,\s}$, then
$f_+ \in \domain \mathbb{I}^{\a,\s}_+$ and so $f_+ \in \domain I^{\a,\s}$, because of \eqref{kkp2}.
This means that if 
$L^p(|x|^{ap}dw_{\a}) \subset \domain \mathbb{I}^{\a,\s}$,
then $L^p(x^{ap}d\mu_{\a}) \subset \domain I^{\a,\s}$,
and so the condition in question must be satisfied in virtue of Theorem \ref{thm:main} (i).

Proving (ii), assume first that (a)-(e) are satisfied. Observe that,
because of \eqref{kkp1} and Theorem~\ref{thm:main} (ii), the operators $\mathbb{I}^{\a,\s}_{\pm}$ satisfy
\begin{equation} \label{in6}
\|x^{-b}\, \mathbb{I}^{\a,\s}_{\pm} g\|_{L^q(d\mu_{\a})} \lesssim \|x^a g\|_{L^p(d\mu_{\a})}
\end{equation}
uniformly in $g \in L^p(x^{ap}d\mu_{\a})$. 
Therefore, with \eqref{dec2} in mind, we can write
\begin{align*}
\| |x|^{-b}\, \mathbb{I}^{\a,\s}f\|_{L^q(dw_{\a})} & \simeq
\| x^{-b} (\mathbb{I}^{\a,\s}f)_+\|_{L^q(d\mu_{\a})} 
	+ \| x^{-b} (\mathbb{I}^{\a,\s}f)_-\|_{L^q(d\mu_{\a})} \\
& \le \| x^{-b}\, \mathbb{I}_+^{\a,\s}(f_+)\|_{L^q(d\mu_{\a})}
	+ \| x^{-b}\, \mathbb{I}_-^{\a,\s}(f_-)\|_{L^q(d\mu_{\a})} \\
& \quad 	+ \| x^{-b}\, \mathbb{I}_+^{\a,\s}(f_-)\|_{L^q(d\mu_{\a})}
	+ \| x^{-b}\, \mathbb{I}_-^{\a,\s}(f_+)\|_{L^q(d\mu_{\a})} \\
& \lesssim \|x^a f_+\|_{L^p(d\mu_{\a})} + \|x^a f_-\|_{L^p(d\mu_{\a})} \\
& \simeq \| |x|^a f\|_{L^p(dw_{\a})}.
\end{align*}

Finally, to show the necessity part in (ii) we consider $f$ such that $f_- \equiv 0$. Then, in view of
\eqref{kkp2}, $\mathbb{I}_+^{\a,\s}(f_+) \simeq I^{\a,\s}(f_+)$. Therefore, by Theorem \ref{thm:main} (ii),
conditions (a)-(e) are necessary for $\mathbb{I}^{\a,\s}_+$ to satisfy \eqref{in6}. But such an estimate
is implied by the analogous one for $\mathbb{I}^{\a,\s}$, because
$\|x^{-b}\,\mathbb{I}_+^{\a,\s}(f_+)\|_{L^q(d\mu_{\a})} = \|x^{-b}(\mathbb{I}^{\a,\s}f)_+\|_{L^q(d\mu_{\a})}
\le \||x|^{-b}\,\mathbb{I}^{\a,\s}f\|_{L^q(dw_{\a})}$ and 
$\||x|^a f\|_{L^p(dw_{\a})} = \|x^a f_+\|_{L^p(d\mu_{\a})}$. 
The conclusion follows.
\end{proof}

\begin{proof}[{Proof of Theorem \ref{thm:LpLq_D}}] 
Items (i) and (ii) are special cases of the corresponding parts in Theorem~\ref{thm:main_D}. 
To show (iii) we use Theorem \ref{thm:LpLq} (iii) and arguments similar to those from the proof of 
Theorem \ref{thm:main_D}. Details are left to the reader.
\end{proof}

\begin{proof}[{Proof of Theorem \ref{thm:LpLq_besD}}]
The reasoning relies on arguments analogous to those from the proof of Theorem \ref{thm:main_D}, combined
with \eqref{Bkkp1}, \eqref{Bkkp2}, Theorem  \ref{thm:LpLq_bes} and an analogue of \eqref{dec2}. 
We omit the details.
\end{proof}

%%%%%%%%%%%%%%%%%%%%%%%%%%%%%%%%%%%%%%%%%%%%%%%%%%%%%%

\end{document}